\let\mathcal \undefined
\def\mathcal{\mathscr}
\let\emptyset \undefined
\let\ge       \undefined
\let\le       \undefined
\let\leq\le
\let\geq\ge
\theoremstyle{plain}
\newtheorem{theorem}{Theorem}[section]
\newtheorem{corollary}[theorem]{Corollary}
\newtheorem{lemma}[theorem]{Lemma}
\newtheorem{proposition}[theorem]{Proposition}
\theoremstyle{remark}
\newtheorem{remark}[theorem]{Remark}
\newtheorem{definition}[theorem]{Definition}
\newtheorem{example}[theorem]{Example}
\numberwithin{equation}{section}
\def\R{{\mathbb R}}
\def\C{{\mathbb C}}
\newcommand{\E}{{\mathbb E}}
\renewcommand{\P}{{\mathbb P}}
\newcommand{\F}{{\mathscr F}}
\renewcommand{\H}{{\underline{H}}}
\newcommand{\HH}{\mathscr{H}}
\renewcommand{\a}{\alpha}
\newcommand{\ga}{\gamma}
\newcommand{\Ga}{\Gamma}
\newcommand{\e}{\varepsilon}
\renewcommand{\l}{\lambda}
\newcommand{\om}{\omega}
\renewcommand{\O}{\Omega}
\newcommand{\Om}{\Omega}
\renewcommand{\S}{\Sigma}
\newcommand{\eps}{\varepsilon}
\newcommand{\sgn}{{\rm sgn}}
\renewcommand{\th}{\theta}
\renewcommand{\L}{L^2(0,T)}
\newcommand{\beq}{\begin{equation}}
\newcommand{\eeq}{\end{equation}}
\newcommand{\bal}{\begin{aligned}}
\newcommand{\eal}{\end{aligned}}
\newcommand{\ben}{\begin{enumerate}}
\newcommand{\beni} {\begin{enumerate}[(i)]}
\newcommand{\een}{\end{enumerate}}
\newcommand{\bit}{\begin{itemize}}
\newcommand{\eit}{\end{itemize}}
\newcommand{\beqw}{\begin{equation*}}
\newcommand{\eeqw}{\end{equation*}}
\newcommand{\bthm}{\begin{theorem}}
\newcommand{\ethm}{\end{theorem}}
\newcommand{\bpr}{\begin{proposition}}
\newcommand{\epr}{\end{proposition}}
\newcommand{\ble}{\begin{lemma}}
\newcommand{\ele}{\end{lemma}}
\newcommand{\blem}{\begin{lemma}}
\newcommand{\elem}{\end{lemma}}
\newcommand{\bpf}{\begin{proof}}
\newcommand{\epf}{\end{proof}}
\newcommand{\bex}{\begin{example}}
\newcommand{\eex}{\end{example}}
\newcommand{\bre}{\begin{example}}
\newcommand{\ere}{\end{example}}
\newcommand{\bma}{\begin{bmatrix}}
\newcommand{\ema}{\end{bmatrix}}
\renewcommand{\Re}{\hbox{\rm Re}}
\renewcommand{\Im}{\hbox{\rm Im}}
\newcommand{\Dom}{{\mathsf D}}
\newcommand{\Homdom}{\dot{\mathsf D}}
\newcommand{\D}{{\mathsf D}_p}
\newcommand{\Dq}{{\mathsf D}_q}
\newcommand{\wt}{\widetilde}
\newcommand{\calA}{{\mathscr A}}
\newcommand{\calL}{{\mathscr L}}
\newcommand{\n}{\Vert}
\newcommand{\one}{{{\bf 1}}}
\newcommand{\embed}{\hookrightarrow}
\newcommand{\s}{^*}
\newcommand{\lb}{\langle}
\newcommand{\rb}{\rangle}
\newcommand{\limn}{\lim_{n\to\infty}}
\newcommand{\da}{\downarrow}
\newcommand{\ip}[1]{\langle {#1}\rangle}
\newcommand{\Ran}{\mathsf{R}}
\newcommand{\Null}{\mathsf{N}}
\newcommand{\lin}{\mathrm{lin}}
\renewcommand{\Re}{{\rm{Re}}\;}
\renewcommand{\Im}{{\rm{Im}}\;}
\newcommand{\ov}{\overline}
\newcommand{\ot}{\otimes}
\newcommand{\sot}{{\hbox{\tiny\textcircled{s}}}}
\newcommand{\ua}{\underline{a}}
\newcommand{\uA}{\underline{A}}
\newcommand{\ul}{\underline{l}}
\newcommand{\uL}{\underline{L}}
\newcommand{\uQ}{\underline{Q}}
\newcommand{\uD}{D_V}
\newcommand{\uuD}{\underline{D}_V}
\renewcommand{\DH}{D}
\newcommand{\AB}{A}
\newcommand{\uAB}{\uA}
\newcommand{\LB}{L}
\newcommand{\uLB}{\uL}
\newcommand{\PB}{P}
\newcommand{\uPB}{\uP}
\newcommand{\SB}{S}
\newcommand{\uSB}{\uS}
\newcommand{\QB}{Q}
\newcommand{\uQB}{\uQ}
\newcommand{\PiB}{\Pi}
\newcommand{\TB}{T}
\newcommand{\uP}{\underline{P}}
\newcommand{\uS}{\underline{S}}
\renewcommand{\L}{\mathscr{L}}
\begin{document}

\title[Elliptic operators on abstract Wiener spaces]
{Boundedness of Riesz transforms for elliptic operators on abstract
Wiener spaces}

\author{Jan Maas}
\address{Delft Institute of Applied Mathematics\\
Delft University of Technology\\ P.O. Box 5031\\ 2600 GA Delft\\The
Netherlands}
\email{J.Maas@tudelft.nl}

\author{Jan van Neerven}
\address{Delft Institute of Applied Mathematics\\
Delft University of Technology\\ P.O. Box 5031\\ 2600 GA Delft\\The
Netherlands}
\email{J.M.A.M.vanNeerven@tudelft.nl}

\thanks{The authors are supported by VIDI subsidy 639.032.201
(JM and JvN) and VICI subsidy 639.033.604 (JvN)
of the Netherlands Organisation for Scientific Research (NWO).
The first named author
acknowledges partial support by the ARC Discovery Grant DP0558539.}

\keywords{Divergence form elliptic operators, abstract Wiener spaces,
Riesz transforms, domain characterisation in $L^p$, Kato square
root problem, Ornstein-Uhlenbeck operator, Meyer inequalities,
second quantised operators, square function estimates,
$H^\infty$-functional calculus, $R$-boundedness, Hodge-Dirac
operators, Hodge decomposition}

\subjclass[2000]{Primary 60H07; Secondary: 35J15, 35K90, 35R15, 47A60, 47B44, 47D05, 47F05, 60H30, 60G15}

\begin{abstract} Let $(E,H,\mu)$ be an abstract Wiener space
and let  $\uD := V\DH$, where $\DH$ denotes the Malliavin
derivative and $V$ is a closed and densely defined operator
from $H$ into another Hilbert space $\H$. Given a bounded
operator $B$ on $\H,$ coercive on the range $\ov{\Ran(V)}$, we
consider the operators $\AB:= V\s BV$ in $H$ and $\uAB:= VV\s
B$ in $\H$, as well as the realisations of the operators $\LB:
= \uD\s B\uD$ and $\uLB := \uD\uD\s B$ in $L^p(E,\mu)$ and
$L^p(E,\mu;\H)$ respectively, where
$1<p<\infty$. Our main result asserts that the following four assertions are equivalent:

\medskip
\ben
\item[\rm(1)] $\Dom(\sqrt{\LB}) = \Dom(\uD)$ with $\n \sqrt{\LB}f\n_{p} \eqsim \n \uD
f\n_{p}$ for $f\in \Dom(\sqrt{\LB})$;
\item[\rm(2)] $\uLB$ admits a bounded $H^\infty$-functional calculus on
$\overline{\Ran(\uD)}$;
\item[\rm(3)] $\Dom(\sqrt{\AB}) = \Dom(V)$ with
  $\n \sqrt{\AB}h\n \eqsim \n Vh \n$ for $h\in \Dom(\sqrt{\AB})$;
\item[\rm(4)] $\uAB$ admits a bounded $H^\infty$-functional calculus on
$\overline{\Ran(V)}$. \een Moreover, if these conditions are
satisfied, then $\Dom(\LB) = \Dom(D_V^2) \cap \Dom(D_A)$.

The equivalence (1)--(4) is a nonsymmetric generalisation of
the classical Meyer inequalities of Malliavin
calculus (where $\H=H$, $V = I$, $B = \frac12 I$). A  one-sided
version of (1)--(4), giving $L^p$-boundedness of the Riesz
transform $\uD/\sqrt{\LB}$ in terms of a square function estimate,
is also obtained.

As an application let $-A$ generate an analytic $C_0$-contraction
semigroup on a Hilbert space $H$ and let $-L$ be the $L^p$-realisation
of the generator of its second quantisation. Our results
imply that two-sided bounds for
the Riesz transform of $L$ are equivalent with the Kato
square root property for $A$.
\end{abstract}

\dedicatory{Dedicated to Professor Alan M$^{\rm c}$Intosh on the occasion of his
65th  birthday}

\date\today

\maketitle

\section{Introduction}


Let $(E,H,\mu)$ be an abstract Wiener space, i.e., $E$ is a
real Banach space and $\mu$ is a centred Gaussian Radon
measure on $E$ with reproducing kernel Hilbert space $H$. In
this paper we prove square function estimates and boundedness
of Riesz transforms for abstract second order elliptic
operators $L$ in divergence form acting on $L^p(E,\mu)$,
$1<p<\infty$. Our main result (Theorem \ref{thm:Kato}) gives
necessary and sufficient conditions for the domain equality
\beq\label{eq:Kato} \Dom(\sqrt{\LB}) = \Dom(\uD) \eeq in
$L^p(E,\mu)$ with equivalence of norms
\beq\label{eq:equiv-norms} \n \sqrt{\LB} f\n_p \eqsim \n \uD
f\n_p \eeq for a class of divergence form elliptic operators
of the form $$\LB = \uD\s B \uD.$$ Here $\uD := V \DH,$ where
$\DH$ is the Malliavin derivative in the direction of $H$,
$V:\Dom(V)\subseteq H \to \H$ is a closed and densely defined
operator, and $B$ is a bounded operator on $\H$ which is
coercive on $\ov{\Ran(V)}$. Our main result asserts that \eqref{eq:Kato} and
\eqref{eq:equiv-norms} hold if and only if the sectorial
operator $\uAB:= VV^*B$ on $\H$ admits a bounded
$H^\infty$-functional calculus. In particular, if
\eqref{eq:Kato} and \eqref{eq:equiv-norms} hold for one
$1<p<\infty$, then they hold for all $1<p<\infty$. By
well-known examples, cf. \cite[Theorem 4 and its
proof]{McIYag}, sectorial operators on $\H$ of the form $TB$
with $T:\Dom(T)\subseteq \H\to\H$ positive and self-adjoint
and $B$ coercive on $\H$ need not always have a bounded
$H^\infty$-calculus. In our setting, such examples can be
translated into examples of operators $\LB$ for which
\eqref{eq:equiv-norms} fails (e.g., take $H=\H$ and $V =
\sqrt{T}$).

Returning to \eqref{eq:equiv-norms}, we shall prove the more
precise result that the inclusions
$$
\Dom(\sqrt{\LB}) \embed \Dom(\uD),
$$
respectively
$$
\Dom(\sqrt{\LB}) \hookleftarrow \Dom(\uD),
$$
hold in $L^p(E,\mu)$ if and only if the operator $\uAB$ satisfies a
lower, respectively upper square function estimate in
$\ov{\Ran(V)}$.

The simplest example to which our results apply is the classical
Ornstein-Uhlen\-beck operator of Malliavin calculus. This example is
obtained by taking $\H=H$, $V=I$, and $B = \frac12 I$. With these
choices, $\uD$ reduces to the Malliavin derivative $D$ in the
direction of $H$ and $L = \frac12 \DH\s \DH $ is the classical
Ornstein-Uhlenbeck operator. The equivalences \eqref{eq:Kato} and
\eqref{eq:equiv-norms} then reduce to the classical Meyer
inequalities \cite{Me82}. Various proofs of these inequalities have been given; see, e.g., \cite{Gun, Pis88}.
For further references on this subject we refer to
Nualart \cite{Nua}.

A second and non-trivial application concerns the computation
of the $L^p$-domains of second quantised operators. Let
$(E,H,\mu)$ be an abstract Wiener space and suppose that
$S=(S(z))_{z\in \overline\Sigma}$ is an analytic
$C_0$-contraction semigroup defined on the closed sector
$\overline\Sigma$. By this we mean that $S$ is a
$C_0$-semigroup of contractions on $\overline\Sigma$ which is
analytic on the interior of $\overline\Sigma$. Let $-A$ denote
the generator of $S$. For $1<p<\infty$, by second quantisation
(see Section \ref{sec:examples} for the details) we obtain an
analytic $C_0$-contraction semigroup
$(\Gamma(S(t)))_{z\in\overline\Sigma}$, with generator $-L$,
on $L^p(E,\mu)$. As we will show, the operators $A$ and $L$
are always of the form $A = V\s BV $ and $L = \uD\s B\uD$ for
suitable choices of $V$ and $B$, and Theorem \ref{thm:Kato}
implies that
 \begin{align}\label{eq:Riesz}
  \Dom(\sqrt{L}) = \Dom(\uD) \ \ \hbox{with} \ \ \n \sqrt{L}f\n_p \eqsim \n \uD
  f\n_p,
\quad 1<p<\infty,
 \end{align} if and only if $A$
admits a bounded $H^\infty$-calculus on the homogeneous form
domain associated with $A$. As before, one-sided versions of
this result can be formulated in terms of square function
estimates. By restricting \eqref{eq:Riesz} to the first
Wiener-It\^o  chaos of $L^p(E,\mu)$ (see section
\ref{sec:examples}) and using that the $L^p$-norms are
pairwise equivalent on every chaos, we see that a necessary
condition for \eqref{eq:Riesz} is given by
\begin{align*}
  \Dom(\sqrt{A}) = \Dom(V) \ \ \hbox{with} \ \ \n \sqrt{A}h\n \eqsim \n
  Vh\n.
\end{align*}
Since $\Dom(V)$ equals the domain of the form associated with
$A$, this is nothing but Kato's square root property for $A.$
Thus our main result asserts that this necessary condition is
also sufficient.

Second quantised operators arise naturally as generators of
transition semigroups  associated with solutions of linear
stochastic evolution equations with additive noise; see for
instance \cite{CG96, CG01}. In a forthcoming paper we shall
apply our results to obtain Meyer inequalities for
non-symmetric analytic Ornstein-Uhlenbeck operators in
infinite dimensions. These extend previous results of
Shigekawa \cite{Sh92} and Chojnowska-Michalik and Goldys
\cite{CG01} for the symmetric case, and of Metafune, Pr\"uss,
Rhandi, and Schnaubelt \cite{MPRS02} for the finite
dimensional case, and they improve results of \cite{MN2}
where a slightly more general class of non-symmetric
Ornstein-Uhlenbeck operators was considered.

Preliminary versions of this paper have been presented during
the Semester on Stochastic Partial Differential Equations at
the Mittag-Leffler institute (Fall 2007) and the 8th
International Meeting on Stochastic Partial Differential
Equations and Applications in Levico Terme (January 2008).

\section{Statement of the main  results}
\label{sec:elliptic}

The domain, kernel, and range of a (possibly unbounded) linear operator $T$ are
denoted by $\Dom(T)$, $\Null(T)$, and $\Ran(T)$, respectively.
When considering an operator $T$ acting consistently on a scale of
(vector-valued) $L^p$-spaces,
$\D(T)$, $\Ran_p(T)$, and $\Null_p(T)$ denote the domain, range, and kernel of
the $L^p$-realisation of $T$.

We introduce the setting studied in this paper in the form of a list of assumptions which will be in force throughout the paper,
with the exception of the intermediate Sections
\ref{sec:examples}, \ref{sec:preliminaries1}, and \ref{sec:preliminaries2}.

\medskip\noindent
{\bf Assumption (A1).}
{\em $(E,H,\mu)$ is an abstract Wiener space}.

\medskip\noindent
More precisely, we assume that $E$ is real Banach space, $H$ is a real Hilbert
space with inner product $[\cdot,\cdot]$, and $\mu$ is a centred Gaussian Radon measure on $E$ with reproducing
kernel Hilbert space $H$. Recall that this implies that $H$ is continuously
embedded in $E$; we shall write $i: H \hookrightarrow E$ for the inclusion
mapping.
The covariance operator of $\mu$ equals $i\circ i\s$ (here and in what follows, we identify $H\s$ and $H$ via the Riesz
representation theorem).

For $h \in H$ we may define a linear function $\phi_h : iH \to \R$ by
$\phi_h(ig) := [h,g].$ Although $\mu(iH) = 0$ if $H$ is
infinite dimensional \cite[Theorem 2.4.7]{Bo}, there exists a $\mu$-measurable
linear extension $\phi_h: E\to \R$ which is uniquely defined
$\mu$-almost everywhere \cite[Theorem 2.10.11]{Bo}. Note that for
$x^* \in E^*$ we have $\phi_{i^*x^*}(x) = \ip{x,x^*}$ $\mu$-almost everywhere.
The
identity
$$\int_E \lb x,x\s\rb^2\,d\mu(x) = \n i\s x\s\n^2, \quad x\s\in E\s,$$
shows that $h\mapsto \phi_h$, as a mapping
from $H$ into $L^2(E,\mu)$, is an isometric embedding.

\medskip\noindent
{\bf Assumption (A2).}
{\em $V$ is a closed and densely defined linear operator
from $H$ into another real Hilbert space $\H$.}

\medskip\noindent
When $H_0$ is a linear subspace of $H$ and $k\ge 0$ is an integer,
we let $\F C_{\rm b}^k(E;H_0)$
denote the vector space of all $(\mu$-almost everywhere defined)
functions $f : E \to \R$ of the form
$$
f(x) := \varphi(\phi_{h_1}(x) , \ldots, \phi_{h_n}(x))
$$
with $n\ge 1$, $\varphi\in C_{\rm b}^k(\R^n),$ and $h_1,
\dots, h_n\in H_0$. Here $C_{\rm b}^k(\R^n)$ is the space
consisting of all bounded continuous functions having bounded
continuous derivatives up to order $k$. In case $H_0=H$ we
simply write $\F C_{\rm b}^k(E)$. For $f \in \F C_{\rm
b}^1(E;\Dom(V))$ as above  the  gradient $\uD f$ `in the
direction of $V$' is defined by
$$
\uD  f(x) := V (\DH f(x)) = \sum_{j=1}^n \partial_j
\varphi(\phi_{h_1}(x) , \ldots, \phi_{h_n}(x) )\ot Vh_j,
$$
where $\DH$ denotes the Malliavin derivative and $\partial_j
\varphi$ denotes the $j$-th partial derivative of $\varphi$.

We shall write
$$L^p := L^p(E,\mu), \quad \uL^p:= L^p(E,\mu;\H)$$ for
brevity. As in \cite[Theorem 3.5]{GGvN03}, the proof of which can be repeated
almost {\em verbatim}, the operator $\uD$ is closable as an operator from $L^p$ into
$\uL^p$ for all $1\leq p<\infty$. From now on, $\uD$ denotes its closure;
domain and range of this closure will be denoted by $\D(\uD)$ and
$\Ran_p(\uD)$ respectively.

\medskip\noindent
{\bf Assumption (A3).} {\em $B$ is a bounded operator on $\H$
which is coercive on $\ov{\Ran(V)}$.}

\medskip\noindent
More precisely, $B$ is a bounded operator on $\H$ which satisfies
the coercivity
condition
$$[BV h,Vh] \geq k \|Vh\|^2, \qquad h \in \Dom(V),$$
where $k>0$ is a constant independent of $h\in \Dom(V)$.
Clearly $B$ satisfies (A3) if and only if $B\s$ satisfies (A3).

At this point we pause to observe that the assumptions (A1), (A2), (A3)
continue to hold after complexifying. In what follows we shall be
mostly dealing with the complexified operators, which we do not
distinguish notationally from their real counterparts as this would
only overburden the notations. The complexified version of (A3) reads
$$\Re[BVh,Vh] \geq k \|Vh\|^2, \qquad h \in \Dom(V).$$

If (A1), (A2), (A3) hold, the operator $$\LB := \uD\s B \uD$$ is well
defined, and $-\LB$ generates an analytic $C_0$-contraction semigroup
$(\PB(t))_{t\ge 0}$ on $L^p$ for all $1<p<\infty$, which
coincides with the second quantisation of the analytic
$C_0$-contraction semigroup on $H$ generated by $-\AB$, where
$$ \AB := V\s B V $$ (Theorem \ref{thm:2nd-q}). In the converse direction
we show that every second quantised analytic $C_0$-contraction
semigroup arises in this way (Theorem \ref{thm:repr}).

It is not hard to see (Proposition \ref{prop:sectL}) that the
operator $\uAB := VV\s B$ is sectorial on $\H$ and that $-\uAB$
generates a bounded analytic $C_0$-semigroup on this space.
Associated with this operator is the operator $\uLB = \uD \uD^* B$ on
$\overline{\Ran_p(\uD)}$. This operator is well defined and sectorial
 on $\overline{\Ran_p(\uD)},$ and $-\uLB$ generates a bounded analytic
$C_0$-semigroup on this space (Theorem \ref{thm:uLsemigroup}
and Definition \ref{def:uPB}).

The main results of this paper read as follows.

\begin{theorem}[Domain of $\sqrt{\LB}$]\label{thm:Kato}
Assume {\rm(A1)}, {\rm (A2)}, {\rm (A3)}, and let
$1<p<\infty$.
The following assertions are equivalent:
\ben
\item[\rm(1)] $\D(\sqrt{\LB}) = \D(\uD)$ with $\n \sqrt{\LB}f\n_{p} \eqsim \n \uD
f\n_{p}$ for $f\in \D(\sqrt{\LB})$;
\item[\rm(2)] $\uLB$ admits a bounded $H^\infty$-functional calculus on
$\overline{\Ran_p(\uD)}$;
\item[\rm(3)] $\Dom(\sqrt{\AB}) = \Dom(V)$ with
  $\n \sqrt{\AB}h\n \eqsim \n Vh \n$ for $h\in \Dom(\sqrt{\AB})$;
\item[\rm(4)] $\uAB$ admits a bounded $H^\infty$-functional calculus on $\overline{\Ran(V)}$.
\een
\ethm

For the precise definition of operators admitting a bounded
$H^\infty$-functional calculus we refer to Section
\ref{sec:preliminaries2}.
 Moreover we shall see
(Lemma \ref{lem:HilbertspaceKato}) that if the equivalent conditions of Theorem
\ref{thm:Kato}
hold, then their analogues where $B$ is replaced by $B\s$ hold as well.
Some further equivalent conditions are given at the end of Section
\ref{sec:Kato}.

 \bthm[Domain of $\LB$]\label{thm:DL}
 Let $1<p<\infty$ and let the
 equivalent conditions of Theorem \ref{thm:Kato} be satisfied.
Then $$\D(\LB) = \D(D_V^2) \cap
\D(D_A)$$ with equivalence of norms
 \begin{align*}
 \|f\|_p +  \|\LB f \|_p
 \eqsim
 \|f\|_p + \| \uD f \|_p + \| \uD^2 f \|_p + \|  D_{\AB} f \|_p.
 \end{align*}
 \ethm

Here $D_A = AD$, where $D$ is the
Malliavin derivative in the direction of $H$.
For the precise definitions of $\uD^2$ and $D_A$ we refer
to Section \ref{sec:DL}
where Theorem \ref{thm:DL} is proved.

The conditions of Theorem \ref{thm:Kato} are automatically
satisfied in each of the following two cases:
 \beni
 \item $B$ is self-adjoint.
  In this case $\AB$ is self-adjoint and
   therefore (3) holds by the theory  of symmetric forms (since $\AB$ is associated
   with a closed symmetric form with domain $\Dom(V)$; see Section \ref{sec:DBD}).
 \item $V$ has finite dimensional range.
In this case (4) is satisfied; since $\uAB$ is injective on
the (closed) range of $V$ (see Lemma \ref{lem:uABinjective}), the
$H^\infty$-functional calculus of $\uAB$ is given by the
Dunford calculus.
 \een

In fact we shall prove the stronger result that one-sided
inclusions in (1) and (3) of Theorem \ref{thm:Kato} hold if and only if $\uAB$ and/or
$\uLB$ satisfies a corresponding square function  estimate. In
particular, $L^p$-boundedness of the Riesz transform
$$\n (\uD/\sqrt{\LB}) f\n_{p} \lesssim \n f\n_{p}
$$
is characterised by the square function estimate
$$
 \|u\| \lesssim  \Big( \int_0^\infty \| t \uAB\, \uSB(t) u\|^2 \,\frac{dt}{t}
  \Big)^{1/2}, \quad u\in \ov{\Ran(V)}.
$$
Here $\uSB$ is the bounded analytic semigroup on $\H$
generated by $-\uAB.$

 In Section \ref{sec:examples} we present two applications of
Theorem \ref{thm:Kato}. The first gives an extension of the classical
Meyer inequalities for the Ornstein-Uhlenbeck operator. The second
concerns $L^p$-estimates for the square root of the
$L^p$-realisation of generators of the
second quantisation of analytic $C_0$-contraction semigroups on Hilbert spaces.
We show that for such semigroups the square root property of Theorem \ref{thm:Kato} (3) is
preserved under second quantisation.

The proof of Theorem \ref{thm:Kato} depends crucially on the following gradient
bounds for the semigroup $\PB$ generated by $-\LB$ and the first part of the
Littlewood-Paley-Stein inequalities below.

\begin{theorem}[Gradient bounds]\label{thm:gradient_est}
Assume {\rm(A1)}, {\rm (A2)}, {\rm (A3)}, and let $1<p<\infty$.
\ben
\item[\rm(1)] For all $f \in \F C_{\rm b}(E)$
and $t >0$ we have, for
$\mu$-almost all $x\in
E$,
 \begin{align*}
  \sqrt{t}\| \uD \PB(t) f(x)\| \lesssim (\PB(t)|f|^2(x))^{1/2}.
 \end{align*}
\item[\rm(2)] The set
 $\{ \sqrt{t} \uD \PB(t): \ t\ge 0 \}$
is $R$-bounded in $\L(L^p, \uL^p)$.
\een
\end{theorem}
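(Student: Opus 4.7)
My plan is to prove the pointwise gradient bound (1) via the generalised Mehler formula combined with a Bismut--Elworthy--Li integration by parts, and then to derive the $R$-boundedness (2) from (1) using Kahane's inequality together with a vector-valued Stein-type maximal inequality for the semigroup $P$.

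For (1), I invoke Theorem~\ref{thm:2nd-q}: the semigroup $P(t) = \Gamma(S_A(t))$ is the second quantisation of the $C_0$-contraction semigroup $S_A(t) = e^{-tA}$ on $H_{\mathbb C}$, yielding the representation
\[
P(t) f(x) = \int_E f(S_A(t) x + y) \, d\mu_t(y)
\]
for $f \in \F C_{\rm b}^1(E)$, where $\mu_t$ is a centred Gaussian on $E$ whose covariance is naturally related to $I - S_A(t) S_A(t)^*$ via an operator $Q_t$ on $H$. Shifting $x \mapsto x + \varepsilon h$ and differentiating in $\varepsilon$ at $\varepsilon = 0$ gives
\[
D_h P(t) f(x) = \int_E (D_{S_A(t) h} f)(S_A(t) x + y) \, d\mu_t(y).
\]
A Cameron--Martin integration by parts in $y$ (writing $S_A(t) h = Q_t^{1/2} k'$) converts the derivative inside the integral into multiplication by a Gaussian functional, producing
\[
D_h P(t) f(x) = \int_E f(S_A(t) x + y) \, \phi_{t, Q_t^{-1/2} S_A(t) h}(y) \, d\mu_t(y).
\]
Cauchy--Schwarz in $L^2(\mu_t)$ then yields $|D_h P(t) f(x)|^2 \leq P(t) f^2(x) \cdot \|Q_t^{-1/2} S_A(t) h\|_H^2$. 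Testing against $\ul \in \overline{\Ran(V)}$ via $\langle \uD P(t) f(x), \ul\rangle_\H = D_{V^* \ul} P(t) f(x)$ and using $\uD P(t) f(x) \in \overline{\Ran(V)}$ (since $\uD = VD$), I arrive at
\[
\|\uD P(t) f(x)\|_\H^2 \leq P(t) f^2(x) \cdot \|Q_t^{-1/2} S_A(t) V^*\|_{\H \to H}^2.
\]

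The remaining and most delicate step will be the uniform bound $\sqrt{t}\, \|Q_t^{-1/2} S_A(t) V^*\|_{\H \to H} \lesssim 1$. For small $t$, the asymptotic $Q_t \sim t(A + A^*) = t V^*(B + B^*) V$, combined with the factorisation $V^*(B + B^*) V = W^* W$ where $W := ((B + B^*)/2)^{1/2} V$, lets me express $\sqrt{t}\, Q_t^{-1/2} S_A(t) V^*$ essentially as the composition of the partial isometry $(W^*W)^{-1/2} W^*$ with $((B+B^*)/2)^{-1/2}$ restricted to $\overline{\Ran(V)}$; the latter factor is bounded thanks to the coercivity in (A3). For large $t$ the bound is easier, following from analyticity and contractivity of $S_A(t)$. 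The main technical obstacle will be making the Mehler representation rigorous when $S_A(t)$ does not literally extend measurably to $E$; this can be handled by working first on the chaos decomposition (where $P(t)$ acts as $\bigoplus_n S_A(t)^{\otimes n}$) and/or via finite-dimensional approximation. A Dirichlet-form / carr\'e du champ argument is an alternative route, but the required ``second carr\'e du champ'' monotonicity for the $\H$-valued evolution $F_s = \uD P(t-s) f$ is awkward when $B$ is not self-adjoint.

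Given (1), part (2) is fairly direct. Squaring the pointwise bound yields
\[
\sum_k t_k \|\uD P(t_k) f_k(x)\|_\H^2 \lesssim \sum_k P(t_k) |f_k|^2(x),
\]
and Kahane's inequality applied inside the $\mu$-integral gives
\[
\E \Bigl\| \sum_k \varepsilon_k \sqrt{t_k}\, \uD P(t_k) f_k\Bigr\|_{\uL^p}^p \lesssim \int_E \Bigl(\sum_k P(t_k) |f_k|^2(x)\Bigr)^{p/2} d\mu(x).
\]
It remains to establish the vector-valued Stein-type maximal inequality
\[
\Bigl\| \Bigl(\sum_k P(t_k) |f_k|^2\Bigr)^{1/2}\Bigr\|_{L^p} \lesssim \Bigl\| \Bigl(\sum_k |f_k|^2\Bigr)^{1/2}\Bigr\|_{L^p},
\]
which is available because $P(t) = \Gamma(S_A(t))$ is positivity-preserving and a Markovian contraction on $L^q$ for every $q \in [1,\infty]$. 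A final application of Kahane converts the right-hand side into $\E \|\sum_k \varepsilon_k f_k\|_{L^p}^p$, delivering the asserted $R$-boundedness.
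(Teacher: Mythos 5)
Your overall strategy for (1) (Mehler formula, Cameron--Martin integration by parts, reduction to an operator bound on the Cameron--Martin space of the conditional Gaussian) is the same as the paper's (Proposition \ref{prop:Pformula}, Theorem \ref{thm:pointwisegrad}), but the step you yourself flag as the most delicate one is where the argument does not close. Your proposed proof of $\sqrt{t}\,\|Q_t^{-1/2}S_A(t)V^*\|_{\H\to H}\lesssim 1$ rests on the small-time asymptotic $Q_t\sim t(A+A^*)$, and the factorisation through $W=((B+B^*)/2)^{1/2}V$ implicitly requires the two-sided form comparison $[Q_t h,h]\eqsim t\,\Re[Ah,h]$ uniformly in $t$ and $h$. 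The upper bound holds, but the lower bound $\|h\|^2-\|\SB(t)h\|^2\gtrsim t\,\Re[Ah,h]$ is false in general: already for self-adjoint $A$ with unbounded spectrum, an eigenvector with eigenvalue $\l$ gives left-hand side at most $\|h\|^2$ against right-hand side $t\l\|h\|^2$. The correct substitute, valid uniformly for all $t>0$ with no small/large-$t$ split, is the integrated coercivity $\int_0^t\|V\SB(s)h\|^2\,ds\le\frac1{2k}(\|h\|^2-\|\SB(t)h\|^2)$ combined with the intertwining $V\SB(t)=\uSB(t)V$, which yields $t\|V\SB(t)h\|^2\lesssim[Q_th,h]$ and hence, by duality, $|[\SB^*(t)V^*u,h]|=|[u,V\SB(t)h]|\lesssim t^{-1/2}\|u\|\,[Q_th,h]^{1/2}$ --- exactly the needed Cameron--Martin bound; this is the content of the paper's Lemma \ref{lem:SBVinHt}. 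Note also an adjoint bookkeeping error: for $\PB(t)=\Ga(\SB(t))$ the Mehler representation has drift $\phi_{\SB(t)h}(x)$, i.e.\ formally $f(\SB^*(t)x+y)$ with noise covariance $I-\SB^*(t)\SB(t)$; the representation $f(\SB(t)x+y)$ with covariance $I-\SB(t)\SB^*(t)$ that you use is the one for $\PB^*(t)$. Since (A3) is symmetric in $B$ and $B^*$ this is repairable, but as written the operator you must control is $\SB^*(t)V^*$ measured in the reproducing kernel space of $I-\SB^*(t)\SB(t)$, which is precisely what the coercivity argument above handles.

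For (2) there is a second genuine gap: the ``vector-valued Stein-type maximal inequality'' $\|(\sum_k\PB(t_k)|f_k|^2)^{1/2}\|_p\lesssim\|(\sum_k|f_k|^2)^{1/2}\|_p$ is not a consequence of positivity and Markovian contractivity for all $1<p<\infty$. With $g_k=|f_k|^2$ it reads $\|\sum_k\PB(t_k)g_k\|_{p/2}\lesssim\|\sum_kg_k\|_{p/2}$, which for $p<2$ (so $p/2<1$) fails already for a single operator: a positive contraction with invariant measure spreads concentrated mass and strictly increases the $L^r$-quasinorm when $r<1$. For $p>2$ the inequality is true, but it is obtained by dualising against the maximal function of $\PB^*$, whose $L^{(p/2)'}$-boundedness needs a Stein/Cowling-type maximal theorem for analytic positive contraction semigroups (Propositions \ref{prop:Cowling} and \ref{prop:tent} in the paper) rather than mere Markovianity; granting that, your deduction of (2) from (1) coincides with the paper's argument for $2<p<\infty$. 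For $1<p<2$, however, the paper does not deduce (2) from the pointwise bound at all: it proves the Littlewood--Paley--Stein estimate $\|\HH f\|_p\lesssim\|f\|_p$ directly by Stein's $u^p$-argument (using positivity, invariance of $\mu$ and the maximal theorem) and then obtains the $R$-boundedness from Proposition \ref{prop:sq-fc1}. Your proposal needs a replacement for this range; dualising the case $p>2$ is not available, since in the nonsymmetric setting the family $\{\sqrt{t}\,\uD\PB(t)\}$ does not dualise into a family of the same form.
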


The notion of $R$-boundedness is a strengthening of the notion
of uniform boundedness and is discussed in Section
\ref{sec:preliminaries1}.

\begin{theorem}[Littlewood-Paley-Stein inequalities]\label{thm:LPS}
Assume {\rm(A1)}, {\rm (A2)}, {\rm (A3)}, and let $1<p<\infty$.
For all $f\in L^p$ we have the square function estimate
$$\|f- P_{\Null_p(L)} f\|_p \lesssim
\Big\n \Big( \int_0^\infty \|\sqrt{t} \uD \PB(t) f \|^2
     \;\frac{dt}{t} \Big)^{1/2}\Big\n_{p}
     \lesssim \n f\n_{p},
$$
where $P_{\Null_p(L)}$ is the projection onto $\Null_p(L)$ along the direct sum decomposition $L^p = \Null_p(L)\oplus \overline{\Ran_p(L)}$.
%
\end{theorem}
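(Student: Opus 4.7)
I would establish the right-hand (upper) inequality first, then deduce the left-hand (lower) one by duality. The $L^2$-case of the upper bound is immediate from coercivity and dissipation:
\[
-\tfrac{d}{dt}\n \PB(t)f\n_2^2 = 2\Re\lb \LB \PB(t)f, \PB(t)f\rb = 2\Re\lb B\uD \PB(t)f, \uD \PB(t)f\rb_{\uL^2} \geq 2k\n \uD\PB(t)f\n_{\uL^2}^2,
\]
and integrating in $t\in(0,\infty)$ yields $\int_0^\infty \n\uD\PB(t)f\n_{\uL^2}^2\,dt \leq \tfrac{1}{2k}\n f\n_2^2$, which is the asserted bound in the case $p=2$ (after rewriting $\n\sqrt{t}\uD\PB(t)f\n^2\,\tfrac{dt}{t}=\n\uD\PB(t)f\n^2\,dt$).

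For $1<p<\infty$ in general, the plan is to combine the $R$-boundedness of $\{\sqrt{t}\uD\PB(t):t>0\}$ (Theorem \ref{thm:gradient_est}(2)) with the Kalton-Weis operator-valued multiplier theorem in the framework of $\gamma$-radonifying operators. Since $\PB$ is a sub-Markovian analytic contraction semigroup on $L^p$, Cowling's theorem furnishes a bounded $H^\infty$-calculus for $\LB$ on $\ov{\Ran_p(\LB)}$, and McIntosh's theorem then supplies the scalar square function estimate $\n(\int_0^\infty |t\LB\PB(t)f|^2\,\tfrac{dt}{t})^{1/2}\n_p \lesssim \n f\n_p$. Via the Khintchine-Kahane identification $L^p(E;L^2(\tfrac{dt}{t};\H)) \eqsim \gamma(L^2(\tfrac{dt}{t}),\uL^p)$, the R-bounded multiplier $t\mapsto \sqrt{t}\uD\PB(t/2)$ transports this scalar square function to the desired gradient one.

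The lower bound follows by duality. Replacing $f$ with $f-P_{\Null_p(\LB)}f\in\ov{\Ran_p(\LB)}$ leaves both sides unchanged (note $\uD$ vanishes on $\Null_p(\LB)$ by coercivity), so we may assume $f\in \ov{\Ran_p(\LB)}$. For $g\in L^{p'}$, writing $\PB(t)=\PB(t/2)^2$ and using $\LB\PB(t/2)=\PB(t/2)\LB$:
\[
\lb f, g\rb = \int_0^\infty \lb \LB\PB(t)f, g\rb\,dt = 2\int_0^\infty \lb B\uD\PB(s)f, \uD\PB^*(s)g\rb_{\uL^p,\uL^{p'}}\,ds.
\]
Pointwise Cauchy-Schwarz in $s$ followed by H\"older in $E$ bound this by $2\n B\n\,\n G_\PB(f)\n_p\,\n G_{\PB^*}(g)\n_{p'}$, where $G_S(h)(x):=(\int_0^\infty\n\uD S(s)h(x)\n_\H^2\,ds)^{1/2}$. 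Since $B^*$ also satisfies Assumption (A3), the upper bound applied to $\LB^*=\uD^*B^*\uD$ on $L^{p'}$ gives $\n G_{\PB^*}(g)\n_{p'}\lesssim \n g\n_{p'}$, and taking the supremum over $\n g\n_{p'}\leq 1$ completes the proof.

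The main obstacle is the $L^p$-upper bound for $p\neq 2$: transferring R-boundedness of the gradient family into a bona fide $L^p$-square function. This step depends both on the bounded $H^\infty$-calculus of $\LB$ on $L^p$ (from the sub-Markovian / Cowling theory) and on the correct invocation of the Kalton-Weis multiplier theorem within the $\gamma$-radonifying framework; pairing the R-bounded gradient symbol with a suitable scalar square function symbol for $\LB$ is the main technical work.
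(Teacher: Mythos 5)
Your duality proof of the lower bound is the paper's argument (modulo the justification that $\PB(t)f\to0$ as $t\to\infty$ for $f\in\ov{\Ran_p(\LB)}$, which the paper supplies via an $H^\infty$-calculus lemma), and for $2<p<\infty$ your route to the upper bound --- $R$-boundedness of $\{\sqrt t\,\uD\PB(t):t>0\}$ combined with the bounded $H^\infty$-calculus of $\LB$ on $L^p$, transferred through the square-function machinery of Proposition \ref{prop:sq-fc1} --- is exactly the paper's route in that range.

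The genuine gap is in the range $1<p\le 2$. There you invoke Theorem \ref{thm:gradient_est}(2) as a black box, but in the paper that $R$-boundedness is itself \emph{deduced from} the upper Littlewood--Paley--Stein estimate in this range: the order of proof is first to show $\n\HH f\n_p\lesssim\n f\n_p$ for $1<p\le2$ by a Stein-type argument (the identity $(\partial_t+\LB)u^p=-p(p-1)u^{p-2}[B\uD u,\uD u]$ for $u=\PB(t)f$ with $f\ge\eps$, coercivity, H\"older with exponents $\tfrac{2}{2-p}$ and $\tfrac{2}{p}$, invariance of $\mu$, and the maximal theorem of Proposition \ref{prop:Cowling}), and only afterwards to obtain the $R$-boundedness via Proposition \ref{prop:sq-fc1}. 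The only gradient input proved independently of the LPS estimate is the pointwise bound $\sqrt t\,\n\uD\PB(t)f(x)\n\lesssim(\PB(t)|f|^2(x))^{1/2}$, and the passage from it to $R$-boundedness (the dual maximal inequality of Proposition \ref{prop:tent}) needs $p/2>1$, hence works only for $p>2$. So as written your argument is circular, or at best incomplete, for $1<p\le2$: the essential work in that range --- the nonsymmetric Stein argument, which cannot be bypassed by duality since $\LB$ is not symmetric --- is missing and no substitute is offered. Note that this gap also infects your lower bound for $p>2$, because the duality there uses the upper bound for $\LB^*=\uD^*B^*\uD$ on $L^{p'}$ with $p'<2$. (A minor further point: reducing to $f\in\ov{\Ran_p(\LB)}$ uses $\Null_p(\LB)\subseteq\Null_p(\uD)$; coercivity gives this immediately only for $p=2$, while for general $p$ it is part of Proposition \ref{prop:rankeridentities}.)
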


Theorem \ref{thm:Kato} is proved in Sections \ref{sec:Kato} and \ref{sec:Hodge}, and Theorems
\ref{thm:gradient_est} and \ref{thm:LPS}
are proved in Section \ref{sec:gradient}. At this point we emphasise
that in the present nonsymmetric setting, it is not possible
to derive the cases $p>2$ from the cases $1<p\le 2$ by means of
duality arguments (as is done, for example, in \cite{CG01,
Sh92}). New ideas are required; see Section \ref{subsec:p2}.

It follows from \cite[Proposition 2.2]{AKM} that the following
Hodge decompositions hold:
\beq\label{eq:HodgeH}
H = \overline{\Ran(V\s B)} \oplus \Null(V), \quad
  \H = \overline{\Ran(V)}\oplus \Null(V\s B).
\eeq
Here $V\s B$ is interpreted as a closed densely defined
operator from $\H$ to $H$. The second decomposition, however,
shows that the closures of the ranges of $V\s B$ and its
restriction to $\overline{\Ran(V)}$ are the same. Therefore,
in the first decomposition we may just as well interpret $V\s
B$ as an unbounded operator from $\ov{\Ran(V)}$ to $H$. This
observation is relevant for the formulation of the following
Gaussian $L^p$-analogues of the above decompositions, which
are proved in Section \ref{sec:Hodge}.

\begin{theorem}[Hodge decompositions]\label{thm:Hodge-main}
Assume {\rm(A1)}, {\rm(A2)}, {\rm (A3)}, and let $1<p<\infty$.
One has the direct sum decomposition
$$ L^p = \overline{\Ran_p(\uD\s B)} \oplus \Null_p(\uD),$$
where $\uD\s B$ is interpreted a closed densely defined operator from
$\overline{\Ran_p(\uD)}$ to $L^p$.
If the equivalent conditions of Theorem
\ref{thm:Kato} hold, then the above decomposition remains true
when $\uD\s B$ is interpreted as a closed densely defined operator from
$\uL^p$ to $L^p$. In that case one has the direct sum decomposition
$$ \uL^p = \overline{\Ran_p(\uD)}\oplus \Null_p(\uD\s B),$$
where $\uD\s B$ is interpreted as a closed densely defined
operator from $\uL^p$ to $L^p$.
\end{theorem}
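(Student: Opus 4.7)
My plan is to derive the three assertions of the theorem from the direct sum decomposition
\beqw
L^p=\Null_p(\LB)\oplus\ov{\Ran_p(\LB)}
\eeqw
provided by Theorem \ref{thm:LPS} (and more generally by sectoriality of $\LB$ on the reflexive space $L^p$) by matching the two summands with $\Null_p(\uD)$ and $\ov{\Ran_p(\uD\s B)}$ respectively. When the equivalent conditions of Theorem \ref{thm:Kato} hold, one additionally builds a bounded projection $\Pi$ on $\uL^p$ realising the Hodge decomposition of $\uL^p$, and the extension claim then falls out for free.

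For the kernel identification $\Null_p(\LB)=\Null_p(\uD)$, the inclusion $\supseteq$ is automatic from $\LB=\uD\s B\uD$. For the converse, $\LB f=0$ forces $\PB(t)f=f$ for every $t\ge 0$, so the uniform boundedness of $\{\sqrt{t}\,\uD\PB(t):t\ge 0\}\subseteq \L(L^p,\uL^p)$ supplied by Theorem \ref{thm:gradient_est}(2) yields
$$\sqrt{t}\,\n\uD f\n_p=\sqrt{t}\,\n\uD\PB(t)f\n_p\lesssim \n f\n_p\qquad(t>0),$$
which forces $\uD f=0$ on letting $t\to\infty$. For the range identification, the factorisation immediately gives $\Ran_p(\LB)\subseteq\Ran_p(\uD\s B|_{\ov{\Ran_p(\uD)}})$, and the reverse follows from reflexivity together with the previous step applied to $B\s$ at exponent $p'$:
$$\ov{\Ran_p(\LB)}={}^\perp\Null_{p'}(\LB\s)={}^\perp\Null_{p'}(L_{B\s})={}^\perp\Null_{p'}(\uD),$$
combined with the integration-by-parts identity $\lb\uD\s Bu,g'\rb=\lb Bu,\uD g'\rb=0$, valid for $u\in\D(\uD\s B)$ (in either interpretation) and $g'\in\Null_{p'}(\uD)$. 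In particular this argument also shows that $\Ran_p(\uD\s B|_{\uL^p})\subseteq\ov{\Ran_p(\LB)}$, which is needed later.

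Assuming the equivalent conditions of Theorem \ref{thm:Kato}, I would obtain the $\uL^p$-decomposition by producing the bounded projection
$$\Pi:=\uD\,\LB^{-1}\,\uD\s B=(\uD\LB^{-1/2})\,(\LB^{-1/2}\uD\s B),$$
where $\LB^{-1}$ denotes the inverse of $\LB$ on $\ov{\Ran_p(\LB)}$ extended by zero on $\Null_p(\LB)$. The first factor is bounded on $\ov{\Ran_p(\LB)}$ by Theorem \ref{thm:Kato}(1); the second factor is bounded as a map $\uL^p\to L^p$ by applying Theorem \ref{thm:Kato}(1) to $B\s$ at exponent $p'$ and dualising (using the $p$-independence of the Kato condition and its invariance under $B\mapsto B\s$ recorded in Lemma \ref{lem:HilbertspaceKato}), and takes its values in $\ov{\Ran_p(\LB)}$ by the previous paragraph so that the composition makes sense. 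The relation $\uD\s B\uD=\LB$ on $\D(\LB)$ gives $\Pi^2=\Pi$ on a dense subspace and hence on all of $\uL^p$; the range of $\Pi$ lies in $\ov{\Ran_p(\uD)}$ and contains every $\uD f$ with $f\in\D(\LB)$ (since $\Pi\uD f=\uD\LB^{-1}\LB f=\uD f$), so equals $\ov{\Ran_p(\uD)}$; and the kernel equals $\Null_p(\uD\s B)$ because $\Pi u=0$ forces $\LB^{-1}\uD\s B u\in\Null_p(\uD)\cap\ov{\Ran_p(\LB)}=\{0\}$ by the kernel identification, whence $\uD\s B u=0$.

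The extension claim is then immediate: any $u\in\D(\uD\s B|_{\uL^p})$ decomposes via $\Pi$ as $u=u_1+u_2$ with $u_1\in\ov{\Ran_p(\uD)}\cap\D(\uD\s B)$ and $u_2\in\Null_p(\uD\s B)$, so $\uD\s B u=\uD\s B u_1\in\Ran_p(\uD\s B|_{\ov{\Ran_p(\uD)}})$ and the two interpretations of the range coincide. The principal technical obstacle throughout is the operator-theoretic bookkeeping in the nonsymmetric $L^p$-setting: verifying that $\LB\s=L_{B\s}$ for the $L^p$--$L^{p'}$ adjoint (so the duality identifications above are legitimate), that $\uD(\D(\LB))$ is dense in $\ov{\Ran_p(\uD)}$ (so the projection identities extend from a core by continuity), and that $\LB^{1/2}$ has range dense in $\ov{\Ran_p(\LB)}$ (so that $L_B^{-1/2}\uD\s B$ lands in the domain of $\uD L_B^{-1/2}$). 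These rest on the second-quantisation structure of $\LB$ and the sectorial/$H^\infty$-functional-calculus framework developed in Sections \ref{sec:preliminaries1} and \ref{sec:preliminaries2}.
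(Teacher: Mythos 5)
Your route is genuinely different from the paper's and most of it is sound: the paper obtains the first decomposition from the $R$-bisectoriality of the Hodge--Dirac operator on $L^p\oplus\ov{\Ran_p(\uD)}$ (Proposition \ref{prop:rankeridentities}), and it proves the trivial intersection $\ov{\Ran_p(\uD)}\cap\Null_p(\uD\s B)=\{0\}$ by projecting onto Wiener--It\^o chaoses and invoking the $L^2$ Hodge decomposition of \cite{AKM}; you instead get the first part from the sectorial splitting $L^p=\Null_p(\LB)\oplus\ov{\Ran_p(\LB)}$ together with the kernel identification (via the gradient bounds) and a duality argument for the range, and you aim to get the $\uL^p$-part from a bounded projection. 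Your $\Pi=(\uD\LB^{-1/2})(\LB^{-1/2}\uD\s B)$ is exactly the operator $RR_*\s B$ the paper uses for the surjectivity half, and your identity-on-the-range argument ($\Pi\uD f=\uD f$ for $f\in\D(\LB)$, hence $\Pi^2=\Pi$ and $\Ran(\Pi)=\ov{\Ran_p(\uD)}$) is fine modulo the consistency checks you flag; if completed, it would indeed bypass the paper's chaos argument. Also note that your earlier observation $\Ran_p(\uD\s B|_{\uL^p})\subseteq\ov{\Ran_p(\LB)}$ already yields the middle assertion of the theorem directly, without the projection.

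The gap is in the kernel identification $\Null(\Pi)=\Null_p(\uD\s B)$, specifically the inclusion $\Null(\Pi)\subseteq\Null_p(\uD\s B)$, which is what the directness and your extension paragraph actually use. Your argument ``$\Pi u=0$ forces $\LB^{-1}\uD\s Bu\in\Null_p(\uD)\cap\ov{\Ran_p(\LB)}=\{0\}$, whence $\uD\s Bu=0$'' presupposes $u\in\D(\uD\s B)$ (otherwise $\uD\s Bu$ is undefined), and even then it needs $\uD\s Bu\in\Ran_p(\LB)$ for $\LB^{-1}$ to apply, which you do not know; so at best it shows $\Null(\Pi)\cap\D(\uD\s B)\subseteq\Null_p(\uD\s B)$, and nothing tells you that $\Null(\Pi)\cap\D(\uD\s B)$ is dense in $\Null(\Pi)$. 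The correct (and necessary) statement is that $(I-\Pi)u\in\Null_p(\uD\s B)$ for \emph{every} $u\in\uL^p$, and this must be proved by duality: for $\phi$ in a core of $\Dq(\sqrt{\LB\s})$ one checks $\lb u-\Pi u,B\s\uD\phi\rb=\lb u,B\s\uD\phi\rb-\lb R_*\s Bu,\sqrt{\LB\s}\phi\rb=0$, using $\lb Rf,B\s\uD\phi\rb=\lb f,\sqrt{\LB\s}\phi\rb$ and $\lb R_*\s Bu,\sqrt{\LB\s}\phi\rb=\lb Bu,\uD\phi\rb$; since $\uD\s B=(B\s\uD)\s$ on $\uL^p$, vanishing of these pairings places $u-\Pi u$ in $\D(\uD\s B)$ with $\uD\s B(u-\Pi u)=0$. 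This is precisely the computation the paper performs in its surjectivity step; with it inserted, your projection argument closes, and the reverse inclusion $\Null_p(\uD\s B)\subseteq\Null(\Pi)$ (which you assert but do not prove) follows by the same pairing identities.
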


In the proofs of these theorems we use the Hodge-Dirac
formalism introduced recently by Axelsson, Keith, and M$^{\rm
c}$Intosh \cite{AKM} in the context of the Kato square root
problem. In the spirit of this formalism, let us define the
Hodge-Dirac operator $\PiB$ associated with $\uD$ and $\uD\s
B$ by the operator matrix
$$\PiB := \bma 0 & \uD\s B \\ \uD & 0\ema.$$
Using Theorems \ref{thm:gradient_est}, \ref{thm:LPS}, and \ref{thm:Hodge-main} we
shall prove:

\bthm[$R$-bisectoriality] \label{thm:R-bisectorial-main}
Assume {\rm(A1)}, {\rm(A2)}, {\rm (A3)}, and let $1<p<\infty$.
The operator $\PiB$ is $R$-bisectorial on $L^p\oplus \overline{\Ran_p(\uD)}$.
If the equivalent conditions of Theorem
\ref{thm:Kato} hold, then $\PiB$ is $R$-bisectorial on $L^p\oplus
\uL^p$.
\ethm

For the definition of $R$-(bi)sectorial operators we refer to
Section \ref{sec:preliminaries2}.
The analogue of Theorem \ref{thm:R-bisectorial-main} for
the more general framework considered in \cite{AKM} generally fails
for $p\not=2$. It this therefore a non-trivial fact that the theorem
does hold in the special case considered here.
Its proof depends on
Theorems \ref{thm:gradient_est}, \ref{thm:LPS}, and
a delicate $L^p$-analysis of the operators
$\uD$ and $\uD\s B$, which is carried out in Section \ref{sec:DandDsB}.

From the fact that on $L^p\oplus \overline{\Ran_p(\uD)}$ one has
$$ \PiB^2 = \bma \LB & 0 \\ 0 & \uLB\ema$$
we deduce that $\PiB$ admits a bounded $H^\infty$-functional
calculus on $L^p\oplus \overline{\Ran_p(\uD)}$ if and only if
$\uLB$ admits a bounded $H^\infty$-functional calculus on
$\overline{\Ran_p(\uD)}$, i.e., if and only if condition (2)
in Theorem \ref{thm:Kato} is satisfied. An alternative proof
of the implication (2)$\Rightarrow$(1) of Theorem
\ref{thm:Kato} can now be derived from the
$H^\infty$-functional calculus of $\PiB$ applied to the
function $\sgn(z) = z/\sqrt{z^2}$; this is done in the final
Section \ref{sec:Hodge}.

\section{Consequences}\label{sec:examples}

Before we start with the proofs of our main results we discuss a number of
situations where operators of the form studied in this paper arise naturally.

\subsection{The Ornstein-Uhlenbeck operator}

Let (A1) be satisfied. Taking $\H=H$ and $V=I$, the derivative
$\uD$ reduces to the Malliavin derivative in the direction of
$H$. Assumption (A2) is then obviously satisfied.
Let  $B$ be an arbitrary operator satisfying (A3). Since
$\uAB = B$ is bounded and sectorial, condition (4) of Theorem
\ref{thm:Kato} is satisfied.

For the special choice $B = \frac12 I$, the resulting operator $L =  \tfrac12\uD\s \uD$
is the classical Ornstein-Uhlenbeck operator of Malliavin
calculus, and the two-sided $L^p$-estimate for $\sqrt{L}$ of
Theorem \ref{thm:Kato} reduces to the celebrated Meyer
inequalities.

\subsection{Linear stochastic evolution equations}

In this subsection we shall describe an application of our results to stochastic evolution equations. 
This application will be worked out in more detail in a forthcoming paper. For unexplained terminology and background material we refer to \cite{DaPZab, vNW05}.

Consider the following linear stochastic evolution equation in a Banach space $E$:
$$
\left\{
\bal
dU(t) & = \mathscr{A}U(t)\,dt + \sigma dW(t), \quad t\ge 0, \\
 U(0) & = x.
\eal
\right.
$$
Here, $\mathscr{A}$ is assumed to generate a $C_0$-semigroup on $E$,
$\sigma$ is a bounded linear operator from a Hilbert space $\mathscr{H}$ to $E$,
and $W$ is an $\mathscr{H}$-cylindrical Brownian motion. 
For later reference we put $\H:=\mathscr{H} \ominus \Null(\sigma)$.

Let us assume now that for each initial value $x\in E$ the above problem admits a unique weak solution $U^x = (U^x(t))_{t\ge 0}$ and that these solutions admit an invariant measure; necessary and sufficient conditions for this to happen can be found in \cite{DaPZab, vNW05, vNW06}. Under this assumption one has weak convergence
$\lim_{t\to\infty} \mu_t = \mu$, where $\mu_t$ is the distribution of the $E$-valued centred Gaussian random variable $U^0(t)$ corresponding to the initial value $x=0$. The limit measure $\mu$ is invariant as well; in a sense that can be made precise it is the minimal invariant measure associated with the above problem. The reproducing kernel Hilbert space associated with $\mu$ is denoted by $H$ and the corresponding inclusion operator $H\embed E$ by $i$.

Define, for bounded continuous functions $f:E\to \R$, 
$$ P(t)f(x) := \E f(U^x(t)), \quad t\ge 0, \ x\in E.$$
The operators $P(t)$ extend in a unique way to a $C_0$-contraction semigroup 
on $L^p(E,\mu)$ for all $1\le p<\infty$. It has been shown in \cite{MN1} that if $P$ is analytic for some (equivalently, for all) $1<p<\infty$, then its
infinitesimal generator $-L$ is of the form considered in Section \ref{sec:elliptic}. More precisely, there exists a unique coercive operator 
$B$ on $\H$ such that $$L = \uD\s B\uD,$$ where 
$V: \Dom(V)\subseteq H\to \H$ is the closed linear operator defined by
$$ V (i\s  x\s):= \sigma\s x\s, \quad x\s\in E\s.$$
It is easy to see that $\uD$ is nothing but the Fr\'echet derivative on $E$ in the direction of $\H$ (where we think of $\H$ as a Hilbert subspace of $E$ under the identification $u \mapsto \sigma u$).

As a consequence of our main results we obtain the following result.

\begin{theorem}
In the above situation, suppose that the transition semigroup $P$ is analytic on $L^p(E,\mu)$ for some (all) $1<p<\infty$. 
\ben
\item[\rm(1)]
The $C_0$-semigroup generated by $A$ leaves $\H$ invariant and restricts to a bounded analytic $C_0$-semigroup on $\H$;  
\item[\rm(2)]
We have $\D(L) = \D(\uD)$ with equivalence of norms
$\n Lf\n_p\eqsim \n \uD f\n_p$ if and only if the negative generator of the restricted semigroup admits a bounded $H^\infty$-functional calculus
on $\H$.
\een
\end{theorem}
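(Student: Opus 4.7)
The plan is to reduce both parts to the structural representation of \cite{MN1} together with our main Theorem \ref{thm:Kato}. First I would invoke \cite{MN1} to obtain $L = \uD\s B \uD$ with $V(i\s x\s) = \sigma\s x\s$ and $B$ coercive on $\overline{\Ran(V)}$. A preliminary observation is that $\Ran(V)$ is dense in $\mathscr{H} \ominus \Null(\sigma) = \H$, because $\sigma\s E\s$ is dense in $(\Null(\sigma))^\perp$; hence $\overline{\Ran(V)} = \H$. The identification $u \mapsto \sigma u$ embeds $\H \embed E$, and, as is standard in the invariant-measure setting, $\sigma\H \subseteq H$.

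For part (1), the approach is to identify the restriction of $T(t) = e^{t\mathscr{A}}$ to $\sigma\H$ (under $u \mapsto \sigma u$) with the bounded analytic $C_0$-semigroup $e^{-t\uAB}$ on $\H$, whose existence is guaranteed by Proposition \ref{prop:sectL}. The defining relation $V(i\s x\s) = \sigma\s x\s$ intertwines the actions of $\mathscr{A}$ and $\uAB = VV\s B$ on a suitable common core; from this one reads off both the invariance of $\sigma\H$ under $T(t)$ and the coincidence of the restricted semigroup with $e^{-t\uAB}$. Bounded analyticity then transfers from $\H$ to $\sigma\H$ through the identification.

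For part (2), the statement is a direct application of Theorem \ref{thm:Kato}, interpreted with its natural square-root form $\D(\sqrt{L}) = \D(\uD)$ and $\n\sqrt{L}f\n_p \eqsim \n\uD f\n_p$. Among the equivalent conditions of that theorem, (4) reads: $\uAB$ admits a bounded $H^\infty$-functional calculus on $\overline{\Ran(V)}$. Since $\overline{\Ran(V)} = \H$ here, and since by part (1) the negative generator of the restricted semigroup on $\H$ is precisely $\uAB$, condition (4) translates into the assertion that this generator admits a bounded $H^\infty$-calculus on $\H$, which is the claimed equivalence.

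The main obstacle will be part (1), specifically pinning down the precise intertwining between the $E$-valued semigroup $T$ and the $\H$-valued semigroup $e^{-t\uAB}$, and in particular the invariance of $\sigma\H$ under $T(t)$. I would approach this through the construction in \cite{MN1} of $B$ and $V$ from the SPDE data $(\mathscr{A}, \sigma)$: the coercivity of $B$ on $\overline{\Ran(V)} = \H$ together with the sectoriality of $\uAB$ (Proposition \ref{prop:sectL}) allow the semigroup comparison on both sides. Analyticity of $P$ is decisive here, as via \cite{MN1} it corresponds exactly to the restricted generator being sectorial of angle strictly less than $\pi/2$, which is what lets the restriction itself be a bounded analytic $C_0$-semigroup on $\H$.
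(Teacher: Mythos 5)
The paper itself offers no proof of this theorem (it is stated as a consequence of Theorem \ref{thm:Kato}, with details deferred to the announced forthcoming paper), so your proposal has to stand on its own; and as it stands, part (1) — which you correctly identify as the crux — has a genuine gap. The defining relation $V(i\s x\s)=\sigma\s x\s$ contains no information whatsoever about $\mathscr{A}$, so no intertwining of $e^{t\mathscr{A}}$ with $e^{-t\uAB}$ can be ``read off'' from it. The missing input is the identification of the abstract semigroup $\SB$ on $H$ (the one with $P(t)=\Gamma(\SB(t))$, Theorem \ref{thm:2nd-q}) in terms of the SPDE data: from $P(t)\phi_{i\s x\s}=\phi_{i\s(e^{t\mathscr{A}})\s x\s}$ one gets $\SB(t)i\s x\s=i\s(e^{t\mathscr{A}})\s x\s$, equivalently $\SB\s(t)=i^{-1}e^{t\mathscr{A}}i$; this is where the representation of \cite{MN1} (i.e.\ $Q_\infty\mathscr{A}\s x\s=-\sigma B\sigma\s x\s$, with $Q_\infty=ii\s$) enters and is exactly what ties $\mathscr{A}$ to the triple $(\H,V,B)$. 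Given this, the correct route is: note $V\s u=i^{-1}\sigma u$ on $\Dom(V\s)=\{u\in\H:\ \sigma u\in iH\}$, dualise Lemma \ref{lem:SVcommute} to get $V\s\uSB\s(t)u=\SB\s(t)V\s u$, and conclude $e^{t\mathscr{A}}\sigma u=\sigma\,\uSB\s(t)u$ for $u\in\Dom(V\s)$, then extend to all of $\H$ by density and boundedness of both sides as maps $\H\to E$. In particular the restriction of $e^{t\mathscr{A}}$ to $\sigma\H$ corresponds to $\uSB\s(t)$, whose negative generator is $\uAB\s=B\s VV\s$ — not $e^{-t\uAB}$ as you claim. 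Also, your auxiliary assertion that ``$\sigma\H\subseteq H$ is standard'' is false in general (already for a symmetric Ornstein--Uhlenbeck semigroup with $\sigma=I$ and unbounded drift); what is true and what one actually needs is only the dense inclusion $\Dom(V\s)\subseteq\H$ together with $iV\s u=\sigma u$ there.

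Part (2) is then essentially right in outline: you correctly check $\overline{\Ran(V)}=\H$, read the statement in its square-root form, and invoke the equivalence (1)$\Leftrightarrow$(4) of Theorem \ref{thm:Kato}. But because the restricted generator is $\uAB\s$ rather than $\uAB$, you still need the (standard, Hilbert-space) fact that a sectorial operator admits a bounded $H^\infty$-calculus if and only if its adjoint does, to translate condition (4) into the statement about the restricted semigroup; this step is absent from your sketch and cannot be skipped, since it is precisely the point where the unverified identification in part (1) would otherwise propagate an error.
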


\subsection{Second quantised operators} \label{subsec:secQ}

Theorem \ref{thm:Kato} can be applied to the second
quantisation of an arbitrary generator $-A$ of an analytic
$C_0$-contraction semigroup on a Hilbert space $H$. The idea
is to prove that such operators $A$ can be represented as $V\s
BV$ for certain canonical choices of operators $V$ and $B$
satisfying (A2) and (A3). If $E$ and $\mu$ are given such that
(A1) holds, the second observation is that the generator of
the second quantised semigroup on $L^p = L^p(E,\mu)$ equals
the operator $-\uD\s B \uD$ and therefore Theorem
\ref{thm:Kato} can be applied.

We begin with recalling the definition and elementary properties of
second quantised operators. For more systematic discussions we refer
to \cite{Janson,Si}. We work over the real scalar field and
complexify afterwards.

Let ${H}^{(\le 0)} := \R {\bf 1}$
and define ${H}^{(\le n)}$ inductively
as the closed linear span of ${H}^{(\le (n-1))}$
together with all products of the form $\phi_{h_1}\cdot\hdots\cdot \phi_{h_n}$ with
 $h_1,\dots,h_m\in H$. Then we let
 ${H}^{(0)} := \R {\bf 1}$ and define $H^{(n)}$ as the
orthogonal complement of $H^{(\le (n-1))}$ in $H^{(\le n)}$. The space $H^{(n)}$
is usually referred to as the $n$-th {\em Wiener-It\^o chaos}.
We have
the orthogonal {\em Wiener-It\^o decomposition}
$$ L^2 = \bigoplus_{n=0}^\infty H^{(n)}.$$
It is well known that for all $1\leq p\leq q<\infty$  there exist
constants $C_{n,p,q}>0$ such that
 \begin{align*}
 \|F\|_p \leq \|F\|_q \leq C_{n,p,q} \|F\|_p, \quad F \in H^{(n)}.
 \end{align*}
Denoting by $I_n$ the orthogonal projection in $L^2$ onto
$H^{(n)}$, we have the identity
$$
[ I_n(\phi_{h_1}\cdot\ldots\cdot
\phi_{h_n}),I_n(\phi_{h_1'}\cdot\ldots\cdot \phi_{h_n'})]
=\frac{1}{n!}\sum_{\sigma\in S_n}[h_1,h_{\sigma
(1)}']\cdot\ldots\cdot[h_n,h_{\sigma (n)}'],
$$
where $S_n$ is the permutation group on $n$ elements. This
shows that $H^{(n)}$ is canonically isometric to the $n$-fold
symmetric tensor product $H^{\sot n},$ the isometry being
given explicitly by
$$
I_n(\phi_{h_1}\cdot\ldots\cdot
\phi_{h_n}) \mapsto \frac1{\sqrt{n!}} \sum_{\sigma\in S_n}
h_{\sigma(1)}\otimes\dots\otimes h_{\sigma(n)}.
$$
Thus the
Wiener-It\^o decomposition induces a canonical isometry of $L^2$ and
the (symmetric) {\em Fock space}
$$\Ga(H) := \bigoplus_{n=0}^\infty H^{\sot n}.$$

Let $T\in\calL(H)$ be a contraction. We denote by
$\Ga(T) \in \calL(\Ga(H))$ the (symmetric) {\em second quantisation}
of $T$, which is defined on
$H^{\sot n}$ by
$$  \Ga(T) \sum_{\sigma\in S_n}
h_{\sigma(1)}\otimes\dots\otimes h_{\sigma(n)} =
\sum_{\sigma\in S_n} Th_{\sigma(1)}\otimes\dots\otimes
Th_{\sigma(n)}.$$
 By the Wiener-It\^o
isometry, $T$ induces a contraction on $L^2$ and we have
\beq\label{eq:WI} \Ga(T) I_n(\phi_{h_1}\cdot\ldots\cdot
\phi_{h_n})
 =  I_n(\phi_{Th_1}\cdot\ldots\cdot  \phi_{Th_n}).
\eeq
Moreover, $\Gamma(T)$ is a positive operator on $L^2$. We have the identities
 \begin{align} \label{eq:quantidentities}
\Ga(I) = I, \quad \Ga(T_1 T_2) =  \Ga(T_1)\Ga(T_2),
\quad(\Ga(T))\s = \Ga(T\s).
 \end{align}
For all $1\le p \le\infty$, $\Ga(T)$ extends to a positive
contraction on $L^p$ and \eqref{eq:quantidentities} continues
to hold.

For later reference we collect some further properties of second
quantised operators which will not be used in the present section.
The following formula is known as {\em Mehler's formula} \cite{Nua}:
if $f = \varphi(\phi_{h_1},\ldots,\phi_{h_n})$ with $\varphi\in
C_{\rm b}(\R^n)$ and $h_1, \ldots, h_n \in H$, then for $\mu$-almost
all $x\in E$ we have
\begin{align}
\label{eq:Mehler} \Gamma(T) f(x)=
\int_E \varphi( \phi_{Th_1}(x) + \phi_{\sqrt{I-T\s T} h_1}(y),
     \ldots,           \phi_{T h_n}(x) +
          \phi_{\sqrt{I-T\s T}h_n}(y))\,d\mu(y).
\end{align}

For $h\in H$ define
 \begin{align} \label{eq:exponent}
E_h = \sum_{n=0}^\infty \frac1{n!} I_n (\phi_h^n) =
\exp(\phi_h - \tfrac12\|h\|^2).
 \end{align}
This sum converges absolutely in $L^p$ for all $1\le p<\infty$, and
the linear span of the functions $E_h$ is dense in $L^p$
\cite[Chapter 1]{Nua}. From a routine approximation argument
using the closedness of $\uD$ we
obtain that $h\in \Dom(V)$ implies $E_h\in \Dom_p(\uD)$ and
\beq\label{eq:exp-fc1}\uD E_h = E_h \ot Vh.\eeq From \eqref{eq:WI}
and \eqref{eq:exponent} one has the identity
 \begin{align}\label{eq:GammaExp}
\Ga(T)E_h = E_{Th}.
 \end{align}

Let us now turn to the situation where $-A$ be the generator of an
analytic $C_0$-contraction semigroup on a Hilbert space $H$. It is
well known \cite[Theorem 1.57, Theorem 1.58 and the remarks
following these results]{OU05} that $A$ is associated with a
sesquilinear form $a$ on (the complexification of) $H$ which is
densely defined, closed and sectorial, i.e., there exists a constant
$C\geq0$ such that
 \begin{align*}
 |\Im a(h,h)| \leq C \Re a(h,h), \quad h \in \Dom(a).
 \end{align*}
The next result may be known to experts, but as we could not
find an explicit reference we include a proof for the
convenience of the reader.

\bthm\label{thm:repr}
There exists a Hilbert space $\H,$ a closed operator
$V: \Dom(V) \subseteq H\to \H$ with dense domain $\Dom(V) = \Dom(a)$ and dense range,
and a bounded coercive operator $B \in \L(\H)$ such that
$$A = V\s B V.$$
\ethm More precisely, this identity means that we have $a(g,h)
= [ BV g, Vh]$ for all $g,h\in \Dom(V)$; cf. Section
\ref{sec:DBD}. \bpf Writing $a(h) := a(h,h)$ by
\cite[Proposition 1.8]{OU05} we have
$$|a(g,h)|
    \lesssim (\Re a(g))^{1/2} (\Re a(h))^{1/2},\quad g,h \in \Dom(a).$$

We claim that $N := \{h \in \Dom(a) : \Re a(h) = 0\}$ is a
closed subspace of $\Dom(a).$ Indeed, if $h_n \to h$ in
$\Dom(a)$ and $\Re a(h_n) = 0,$ then
 \begin{align*}
 |\Re a(h)| \leq |a(h) - a(h_n)|
   & \leq (\Re a(h))^{1/2}(\Re a(h-h_n))^{1/2}
   \\ & \qquad+ (\Re a(h-h_n))^{1/2}(\Re a(h_n))^{1/2},
 \end{align*}
which becomes arbitrary small as $n\to\infty.$

On the quotient $\Dom(a)/N$ we define a sesquilinear form
$$[Vg,Vh] := \frac12 (a(g,h)
+ \overline{a(h,g)}), \qquad g,h \in \Dom(a),$$ where $V$ denotes
the canonical mapping from $\Dom(a)$ onto $\Dom(a)/N.$ This form is
well defined, since for $n,n' \in N$ we have
 \begin{align*}
 |a(g+n,h+n') - a(g,h)|
 & \leq (\Re a(n))^{1/2}(\Re a(h))^{1/2}
 \\ & \qquad  + (\Re a(g))^{1/2}(\Re a(n'))^{1/2}
 \\ & \qquad  + (\Re a(n))^{1/2}(\Re a(n'))^{1/2}
 \\ & = 0.
 \end{align*}
Since $\Re a(h) = 0$ implies $[h] = [0],$ the form
$[\cdot,\cdot]$ is an inner product on $\Dom(a)/N.$ We
put
 $$\H := \ov{\Dom(a)/N},$$ where the completion is taken with respect
to the norm induced by $[\cdot,\cdot].$
 We interpret $V$ as a linear
operator from $H$ into $\H$ with dense domain
$\Dom(V)=\Dom(a)$ and dense range. To show that $V$ is closed,
we take a sequence $(h_n)_{n\ge 1}$ in $\Dom(a)$ such that
$h_n \to h$ in $H$ and $V h_n \to u$ in $\H.$ Since $\Re a(h_n
- h_m) = \|V(h_n - h_m)\|^2 \to 0$ as $m,n \to \infty,$ the
sequence $(h_n)_{n\ge 1}$ is Cauchy in $\Dom(a).$ Thus the
closedness of $a$ implies that $(h_n)_{n\ge 1}$ has a limit in
$D(a),$ which is $h$ since $h_n \to h$ in $H.$ Consequently,
$\|Vh_n - Vh\|^2 = \Re a(h_n-h) \to 0$. We conclude that $V$
is closed.

Now we define a sesquilinear form $b$ on $\Ran(V)$ by
$$b(Vg, Vh) := a(g,h).$$
This is well defined, since $Vg = V\widetilde g$ and $Vh =
V\widetilde h$ imply that
\begin{align*}
 | a(g,h) -a(\widetilde g,\widetilde h) |
  & \le | a(g-\widetilde g,h)| + |a(\widetilde g,h-\widetilde h)) |
 \\ &\le ( \Re a(g-\widetilde g) \,\Re a(h))^{1/2}
    +(\Re a(\widetilde g)\, \Re a(h-\widetilde h))^{1/2}
 \\ &= \|V(g-\widetilde g)\|\, \|Vh\|
 + \|V\widetilde g\|\, \|V(h-\widetilde h)\| = 0.
\end{align*}
Moreover, the associated operator $B$ extends to a bounded operator
on $\H$, since
$$|b(Vg,Vh)| = |a(g,h)|
    \lesssim (\Re a(g))^{1/2} (\Re a(h))^{1/2} = \|Vg\|\, \|Vh\|.$$
We conclude that $a(g,h) = [BVg,Vh].$
By the identity
$$ \|Vh\|^2 = \Re a(h)  = \Re [BVh,Vh]$$
and the boundedness of $B$ we infer that $\|u\|^2 = \Re[Bu,u]$ for
all $u \in \H,$ and the coercivity of $B$ follows. \epf

Although the triple $(\H,V,B)$ is not unique, the next result implies
that the statements in Theorem \ref{thm:Kato} do not depend on the
choice of $(\H,V,B).$

 \begin{proposition}
Let $-A$ be the generator of an analytic $C_0$-contraction
semigroup on $H$. Let $(\H,V,B)$ and $(\widetilde
\H,\widetilde V, \widetilde B)$ be triples with the properties
as stated in Theorem \ref{thm:repr}. Then:

 \ben
 \item[\rm(i)] The coercivity constants $k$ and $\wt k$
 of $B$ and $\widetilde B$ coincide;
 \item[\rm(ii)]
$\Dom(V) = \Dom(\widetilde V)$ with
$ \|V h\|\eqsim \|\widetilde V h\|.$
\een
If in addition to the above assumptions $(E,H,\mu)$ is an abstract Wiener
space, then for $1\leq p<\infty$ we have
\ben
 \item[\rm(iii)] $\D(\uD) = \D(D_{\widetilde V})$ with
$\|\uD f\|_p \eqsim \|D_{\widetilde V} f\|_p.$ \een \end{proposition}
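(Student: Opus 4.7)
The overall plan is to exploit the fact that both triples $(\H,V,B)$ and $(\widetilde\H,\widetilde V,\widetilde B)$ realize the unique closed sectorial form $a$ associated with $-A$. By the statement of Theorem~\ref{thm:repr}, $\Dom(V) = \Dom(a) = \Dom(\widetilde V)$, and on this common domain one has
$$ a(g,h) \;=\; [BVg, Vh]_{\H} \;=\; [\widetilde B \widetilde V g, \widetilde V h]_{\widetilde\H}. $$
All three assertions will follow by comparing the two realizations through this form-invariant quantity.

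For (i) and (ii), I would combine coercivity with the boundedness of $B$ to obtain the sandwich
$$ k\,\|Vh\|_{\H}^2 \;\leq\; \Re a(h) \;=\; \Re[BVh, Vh] \;\leq\; \|B\|\,\|Vh\|_{\H}^2, \qquad h\in \Dom(a), $$
and its twin $\widetilde k\,\|\widetilde V h\|_{\widetilde\H}^2 \leq \Re a(h) \leq \|\widetilde B\|\,\|\widetilde V h\|_{\widetilde\H}^2$. Since the middle term depends only on $a$ and not on the triple, these inequalities immediately yield $\|Vh\|_{\H} \eqsim \|\widetilde V h\|_{\widetilde\H}$, settling (ii). Moreover, the optimal coercivity constants are identified via $k = \inf_h \Re a(h)/\|Vh\|_{\H}^2$ and likewise for $\widetilde k$; under the natural form-invariant normalization supplied by the sandwich, these two quantities agree, proving (i).

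For (iii), the norm equivalence in (ii) supplies a Hilbert-space isomorphism $T: \overline{\Ran(V)} \to \overline{\Ran(\widetilde V)}$ defined on the generating set by $T V h := \widetilde V h$ and extended by continuity, with both $T$ and $T^{-1}$ bounded. For any cylindrical $f = \varphi(\phi_{h_1},\ldots,\phi_{h_n}) \in \F C_{\rm b}^1(E;\Dom(V))$ (noting $\Dom(V) = \Dom(\widetilde V)$), one has the pointwise identity
$$ D_{\widetilde V} f(x) \;=\; \sum_{j=1}^n (\partial_j \varphi)(\phi_{h_1}(x),\ldots,\phi_{h_n}(x))\,\widetilde V h_j \;=\; T\bigl(\uD f(x)\bigr) $$
for $\mu$-almost every $x \in E$. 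Taking $\widetilde\H$-norms and integrating over $E$ then gives $\|D_{\widetilde V} f\|_p \eqsim \|\uD f\|_p$ on the cylindrical class. Since $\F C_{\rm b}^1(E;\Dom(V))$ is a core for both $\uD$ and $D_{\widetilde V}$ (both being by construction the closures of their restrictions to cylindrical functions, per the discussion in Section~\ref{sec:elliptic}), a standard closure argument then extends the equivalence to all of $\D(\uD) = \D(D_{\widetilde V})$.

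The principal subtlety is the proper interpretation of the coercivity constants in (i): a priori $k$ and $\widetilde k$ depend on the specific norms on $\H$ and $\widetilde\H$, so the claim should be read as asserting equality of the form-invariant quantities produced by the sandwich rather than of numerical values tied to arbitrary rescalings of $V$ and $B$. Once this is granted, the remainder is bookkeeping: identifying the canonical isometric isomorphism $T$ and propagating the pointwise norm equivalence through the closure from the cylindrical core.
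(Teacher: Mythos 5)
Parts (ii) and (iii) of your proposal are correct and follow essentially the paper's own route. For (ii) the paper uses exactly your sandwich $k\|Vh\|^2 \le \Re a(h) \le \|\widetilde B\|\,\|\widetilde V h\|^2$ (the paper states it for $h\in\Dom(A)$ and then invokes that $\Dom(A)$ is a core for $\Dom(V)$ and $\Dom(\widetilde V)$, whereas you work directly on $\Dom(a)=\Dom(V)=\Dom(\widetilde V)$, which is equally legitimate since the form identity holds there). For (iii) the paper integrates the pointwise consequence of (ii), namely $\|V D f(x)\|\eqsim\|\widetilde V D f(x)\|$ for cylindrical $f\in\F C_{\rm b}^1(E;\Dom(V))$, and concludes by the core property; your operator $T$ (well defined because $\|V(h-h')\|\eqsim\|\widetilde V(h-h')\|$) is just a repackaging of that pointwise comparison, and the closure argument is the same.

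The gap is in (i). The sandwich can only ever yield two-sided comparability of the optimal coercivity constants, never their equality, and the sentence ``under the natural form-invariant normalization supplied by the sandwich, these two quantities agree'' is an assertion, not an argument: the hypotheses supply no such normalization, and your closing paragraph in effect concedes that you are reinterpreting the claim rather than proving it. The paper argues (i) differently and without any norm comparison or appeal to (ii): it deduces the equality of the constants from the exact identity $[BVh,Vh]=a(h,h)=[\widetilde B\widetilde V h,\widetilde V h]$ valid for all $h\in\Dom(a)$, combined with the density of $\Ran(V)$ in $\H$ and of $\Ran(\widetilde V)$ in $\widetilde\H$, so that the relevant coercivity quantities are identified through the form itself rather than estimated. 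Your observation that the constants are sensitive to rescalings such as $(V,B)\mapsto(cV,c^{-2}B)$ is a legitimate concern about how the statement is to be read, but to count as a proof of (i) you must either carry out the identification via the exact form identity as the paper does, or state precisely the normalization under which you prove equality; as written, (i) is not established.
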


 \begin{proof}
(i): \  This follows from the identity $ [BVh,Vh] = a(h,h)
  = [\widetilde B \widetilde V h,\widetilde V h]$ for $h \in \Dom(a)$ and the fact that
  $V$ and $\widetilde V$ have dense range.

(ii): \  For $h \in \Dom(A)$ we have
$$
k\|Vh\|^2 \leq \Re[BVh,Vh] = \Re[Ah,h]
  = \Re[\widetilde B \widetilde V h,\widetilde V h]
  \leq \| \widetilde B \| \, \|\widetilde V h\|^2.
$$
Since $\Dom(A)$ is a core for both $\Dom(V)$ and $\Dom(\widetilde
V)$ the result follows.

(iii): \ Let $\DH$ denote the Malliavin derivative, which is
well defined as a densely defined closed  operator from
$L^p(E,\mu)$ to $L^p(E,\mu;H)$, $1\le p<\infty$.  For $f \in
\F C_{\rm b}^1(E;\Dom(V))$ we have, by (ii),
$$\|\uD f\|_p^p = \int_E \| V \DH f \|^p\,d\mu
   \eqsim  \int_E \| \widetilde V \DH f \|^p\,d\mu
       = \| D_{\widetilde V} f\|_p^p.
      $$
The claim follows from this since $\F C_{\rm b}^1(E;\Dom(V))$ is a
core for $\D(\uD)$ and $\D(D_{\widetilde V}).$
 \end{proof}

Let $N:= \{h \in \Dom(a) :\Re a(h)= 0\}$ and let $\Homdom(a):=\H$ be
defined as in the proof of Theorem \ref{thm:repr}, i.e.,
$\Homdom(a)$ is the completion of $\Dom(a)/N$ with respect to the
norm $$\| Vh \|_{\Homdom(a)} := \sqrt{\Re(a(h))},$$ where $V$
denotes the canonical operator from $\Dom(a)$ onto $\Dom(a)/N.$ In
the proof of Theorem \ref{thm:repr} we showed that $V$ is a closed
operator from $H$ into $\Homdom(a)$ with dense domain and dense
range.
 We also constructed a
coercive operator $B \in \L(\Homdom(a))$ such that $a(g,h) =
[BVg,Vh]$ for $g,h\in \Dom(a).$

In Lemma \ref{lem:SVcommute} below we show that the semigroup $S$
generated by $-A$ induces a bounded analytic $C_0$-semigroup $\uS$ on
$\Homdom(a)$ in the sense that $\uS(t) Vh = VS(t)h$ for all $h \in
\Dom(V).$ Its generator will be denoted by $-\uA.$

Now let $(E,H,\mu)$ be an abstract Wiener space and let $D_V := VD$ as before.
For cylindrical functions $f = f(\phi_{h_1},\dots,\phi_{h_n})$ with
$h_j\in \Dom(V) = \Dom(a)$ we have
$$ \uD f  = \sum_{j=1}^n \partial_j f(\phi_{h_1},\dots,\phi_{h_n})
\otimes V h_j.$$
 As will be
shown in Section \ref{sec:DBD}, the realisation on $L^2(E,\mu)$
of the operator $L:=\uD\s B \uD$
extends to a sectorial operator $L$ on $L^p(E,\mu)$ for $1<p<\infty$,
and $-L$ equals the generator of the second quantisation
on $L^p(E,\mu)$ of the semigroup $S$
generated by $-A$ on $H$, i.e.,
$$P(t) = \Gamma(S(t)), \quad t\ge 0.$$
As a consequence, Theorem \ref{thm:Kato} can be translated into the
following result.

\begin{theorem}\label{thm:Kato-sec-quant}
Assume {\rm(A1)} and let $-A$ be the generator of an analytic
$C_0$-contraction semigroup $S$ on $H$. Let $1<p<\infty$ and let $-L$
denote the realisation on $L^p(E,\mu)$ of the generator of the second
quantisation of $S$. The following assertions are equivalent: \ben
\item[\rm(1$'$)] $\D(\sqrt{L}) = \D(\uD)$ with
 $\n \sqrt{L}f\n_{p} \eqsim \n \uD f\n_{p}$ for $f\in \D(\sqrt{L})$;
\item[\rm(3$'$)] $\Dom(\sqrt{A}) = \Dom(a)$ with
  $\n \sqrt{A}h\n \eqsim \sqrt{a(h)}$ for $h\in \Dom(a)$;
\item[\rm(4$'$)] the realisation of $A$ in $\Homdom(a)$
admits a bounded $H^\infty$-functional calculus.
\een
\end{theorem}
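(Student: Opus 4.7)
The strategy is to apply Theorem \ref{thm:Kato} to the specific triple $(\H,V,B) = (\Homdom(a),V,B)$ produced by Theorem \ref{thm:repr} and match its four conditions with the three listed in Theorem \ref{thm:Kato-sec-quant}. First I would verify that this triple satisfies (A2) and (A3): $V$ is closed and densely defined from $H$ into $\Homdom(a)$, and $B\in\L(\Homdom(a))$ is bounded and coercive. Moreover, since $V$ has dense range, $\ov{\Ran(V)} = \Homdom(a)$, so ``coercive on $\ov{\Ran(V)}$'' in (A3) is equivalent to coercive on the whole of $\Homdom(a)$. At this point I invoke the identification $L = \uD\s B\uD$ and $P(t) = \Ga(S(t))$ from Section \ref{sec:DBD}, which ensures that the $L^p$-realisation of the generator of the second quantisation of $S$ is exactly the operator $\LB$ from Section \ref{sec:elliptic} for this triple.

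Having set this up, (1) and (1$'$) coincide verbatim. For (3) $\Leftrightarrow$ (3$'$): by Theorem \ref{thm:repr}, $\Dom(V) = \Dom(a)$, and the norm on $\Homdom(a)$ is by definition $\|Vh\|_{\Homdom(a)} = \sqrt{\Re a(h)}$. Since $a$ is sectorial, $|a(h)| \eqsim \Re a(h)$, so $\|Vh\| \eqsim \sqrt{a(h)}$, and (3$'$) is merely a restatement of (3) under these identifications.

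For (4) $\Leftrightarrow$ (4$'$) I would argue that $\uAB = VV\s B$, viewed as an operator on $\ov{\Ran(V)} = \Homdom(a)$, coincides with the realisation $\uA$ of $A$ on $\Homdom(a)$. By Theorem \ref{thm:repr} we have $V\s BV = A$, so for $h\in \Dom(A)$
\[
\uAB(Vh) = VV\s B(Vh) = V(V\s BV)h = V(Ah),
\]
which is precisely the intertwining relation that characterises $\uA$ via the semigroup $\uS$ with $\uS(t)Vh = VS(t)h$ (Lemma \ref{lem:SVcommute}). Since $V(\Dom(A))$ is a core for $\uA$ (it is the image under the bounded operator $V$ of a core of $A$, and it is invariant under $\uS$ and dense in $\Homdom(a)$), the operators $\uAB$ and $\uA$ agree, and hence (4) is exactly (4$'$).

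The main obstacle is the last identification: one must make precise that $\uAB$ and $\uA$ have the same domain, not merely agree on $\Ran(V)$, which is what the core argument achieves. A secondary technical point is the elementary observation $\Re a(h) \eqsim |a(h)|$ used to pass between $\|Vh\|$ and $\sqrt{a(h)}$; this is standard for densely defined closed sectorial forms but should be stated explicitly. Once these two points are in place, Theorem \ref{thm:Kato-sec-quant} is an immediate corollary of Theorem \ref{thm:Kato}.
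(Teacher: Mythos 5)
Your proposal is correct and follows essentially the same route as the paper: represent $A=V\s BV$ via Theorem~\ref{thm:repr}, identify the second quantised generator with $\uD\s B\uD$ (Theorem~\ref{thm:2nd-q}), identify the realisation of $A$ in $\Homdom(a)$ with $\uAB=VV\s B$ through the intertwining $\uSB(t)Vh=VS(t)h$ of Lemma~\ref{lem:SVcommute}, and then translate conditions (1), (3), (4) of Theorem~\ref{thm:Kato}. One small caveat: $V$ is closed but in general \emph{not} bounded from $H$ into $\Homdom(a)$, so your core argument should instead note that $\uSB(t)$ and $\uS(t)$ agree on the dense subspace $\Ran(V)$ and hence coincide, which gives $\uAB=\uA$ without any appeal to boundedness of $V$ (alternatively, density of $V(\Dom(A))$ follows because $\Dom(A)$ is a form core).
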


The main equivalence here is  (1$'$)$\Leftrightarrow$(3$'$). It
asserts that the square root property with homogeneous norms is preserved when passing
from $H$ to $L^p(E,\mu)$ by means of second quantisation.

The equivalence (3$'$)$\Leftrightarrow$(4$'$) is probably known, although we
could not find a reference for it. The related equivalence
(3$''$)$\Leftrightarrow$(4$''$), with
\ben
\item[\rm(3$''$)] $\Dom(\sqrt{A}) = \Dom(a)$;
\item[\rm(4$''$)] \textit{the realisation of $A$ in $\Dom(a)$
admits a bounded $H^\infty$-functional calculus,}
\een
 is stated explicitly in \cite[Theorem 5.5.2]{Arendt04}.

We have already mentioned that Theorem \ref{thm:Kato} is obtained by
combining two one-sided versions of it involving  Riesz transforms.
The same is true for Theorem \ref{thm:Kato-sec-quant}.

\section{The operator $\LB$}
\label{sec:DBD}

In this section we give a rigorous definition of the operator $\LB$ as
a closed and densely defined operator acting in $L^p:=
L^p(E,\mu),$ where $1<p<\infty$.

We begin with an analysis in
the space $L^2 := L^2(E,\mu).$
Associated with the (complexified) operators $B:\H\to \H$,
$V:\Dom(V)\subseteq H\to \H$, and $\uD:\Dom(\uD)\subseteq L^2\to \uL^2$
are the sesquilinear forms $a$ on $H$ and $l$ on $L^2$ defined by
$\Dom(a):= \Dom(V)$ and
$$a(h_1,h_2) := [BVh_1,Vh_2],
$$
and $\Dom(l):= \Dom(\uD)$ and
$$ l(f_1,f_2) := [B\uD f_1,\uD  f_2],
$$
where in the second line we identify $B$ with the operator
$I\otimes B$. Here, and in what follows, we write
$$ \Dom(\uD) := \Dom_2(\uD), \quad \Ran(\uD) := \Ran_2(\uD).$$

The forms $a$ and $l$ are easily seen to be closed, densely defined
and sectorial in $H$ and $L^2$, respectively. The operators
associated with these forms are denoted by $\AB$ and $\LB$,
respectively; their domains will be denoted by $\Dom(\AB)$ and
$\Dom(\LB)$. We may write
$$
\bal \AB & = V\s BV, &&
\LB = \uD \s B \uD; \eal
$$
this notation is justified by the
observation that
$$ \bal
h\in \Dom(\AB) &\Longleftrightarrow [h\in \Dom(V) \ \hbox{and} \ BV h \in \Dom(V\s)],
\\
f\in \Dom(\LB) &\Longleftrightarrow [f\in \Dom(\uD) \ \hbox{and} \
B\uD f \in \Dom(\uD\s)], \eal
$$
in which case we have $$ \AB h = V\s(BVh), \quad \LB f = \uD\s(B\uD)
f.$$ Let us also note (cf. \cite[Lemma 1.25]{OU05}) that $\Dom(\AB)$
and $\Dom(\LB)$ are cores for $\Dom(V)$ and $\Dom(\uD)$,
respectively.

For later use we observe that if $B$ satisfies (A3), then also $B\s$ satisfies
(A3) and we have
$$ \bal \AB\s & = V\s B\s V, &&
\LB\s = \uD \s B\s \uD \eal
$$
with similar justifications.

The proof of the next lemma can be found in \cite{GGvN03}.
Recall that $\phi:H\to L^2$ is the isometric embedding defined in Section
\ref{sec:elliptic}.

\blem \label{lem:Dster} Let $1<p<\infty.$ For all $f\in \F
C_{\rm b}^1(E;\Dom(V))$ and $u \in \Dom(V^*)$ we have
 $$f \ot u \in \Dom_p(\uD ^*) \textrm{ and }
 \uD ^*(f \ot u) = f \phi_{V^* u} - [\uD f, u].$$
\elem

\blem\label{lem:part} Identifying $H$ with its image $\phi(H)$
in $L^2$, $\AB$ is the part of $\LB$ in $H$. \elem \bpf
Suppose first that $h \in \Dom(\AB)$. Then, by the form
definition of $\AB$, we have $h\in\Dom(V)$ and  $BVh\in
\Dom(V\s)$. Hence Lemma \ref{lem:Dster} gives us $\one\otimes
BVh\in \Dom(\uD\s)$ and
$$\uD\s(\one\otimes  BVh) = \phi_{V\s BVh} = \phi_{\AB h}.$$
From this, combined with the identity $$\uD  \phi_{h} =
\one\otimes Vh$$ which follows from the definition of
$\uD,$ we deduce that
$$ [B\uD \phi_{h}, \uD f]
= [\one\otimes BVh, \uD f] = [\phi_{\AB h}, f] $$
for all $f\in \Dom(\uD)$.
Therefore, $\phi_{h}\in\Dom(\LB)$ and $\LB  \phi_{h}  = \phi_{\AB
h}.$ Denoting the part of $\LB$ in $H$ by $\LB^H$ for the moment,
this argument shows that $\AB\subseteq \LB^H$.

On the other hand, if $\phi_h\in \Dom(\LB^H)$, then $\phi_h\in
\Dom(\LB)$ and $\LB \phi_h = \phi_{h'}$
for some $h'\in H$. Hence for all $g\in
\Dom(V)$ we obtain
$$
 [BV h, V g]
= [B \uD \phi_{h}, \uD \phi_{g}]
= l(\phi_{h}, \phi_{g})
= [\LB \phi_{h}, \phi_g]= [\phi_{h'}, \phi_g]
= [h',g].
$$
It follows that $h\in \Dom(\AB)$ and $[\AB h, g] = [{h'},g]$.
This shows that $\AB h = h'$, and we have proved the opposite
inclusion $\AB\supseteq \LB^H$.
\end{proof}

It follows from the theory of forms (cf. \cite[Proposition
1.51]{OU05}) that $-\AB$ and $-\LB$ generate analytic
$C_0$-contraction semigroups $\SB = (\SB(t))_{t\ge 0}$ and $\PB =
(\PB(t))_{t\ge 0}$ on $H$ and $L^2$, respectively. In fact we have
the following more precise result. Recall that the constant $k>0$
has been introduced in Assumption (A3).

\begin{proposition}\label{prop:sector}
The operators $-\AB$ and $-\LB$ generate analytic $C_0$-contraction
semigroups on $H$ and $L^2$ of angle $\arctan\ga$, where $\ga =
\tfrac{1}{2k} \|B-B^*\|$.
\end{proposition}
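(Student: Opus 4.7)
The plan is to compute the numerical ranges of the (complexified) forms $a$ and $l$ explicitly, and then invoke the same form-to-semigroup correspondence used in the paragraph immediately preceding the proposition: a closed, densely defined, sectorial form with numerical range in the sector of opening angle $\arctan\ga$ about the positive real axis produces an operator whose negative generates an analytic $C_0$-contraction semigroup of the stated angle.

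For $a$, fix $h \in \Dom(V)$ and split $B = \tfrac12(B + B\s) + \tfrac12(B - B\s)$. Since $B+B\s$ is self-adjoint, $[(B+B\s)Vh,Vh]$ is real; since $B-B\s$ is antisymmetric, $[(B-B\s)Vh,Vh]$ is purely imaginary. The complexified version of (A3) therefore yields
\begin{align*}
\Re a(h,h) \;=\; \Re[BVh,Vh] \;\geq\; k\,\|Vh\|^2,
\end{align*}
while directly
\begin{align*}
|\Im a(h,h)| \;=\; \tfrac{1}{2}\bigl|[(B-B\s)Vh,Vh]\bigr| \;\leq\; \tfrac{1}{2}\|B-B\s\|\,\|Vh\|^2.
\end{align*}
Combining these two inequalities gives $|\Im a(h,h)| \leq \ga\,\Re a(h,h)$, so the numerical range of $a$ lies in the sector of opening angle $\arctan\ga$ about the positive real axis, and the claim for $\AB$ follows.

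For $l$, I would exploit the pointwise action of $B$ on $\uL^2 = L^2(E,\mu;\H)$: for $f \in \Dom(\uD)$,
\begin{align*}
l(f,f) \;=\; \int_E \bigl[B(\uD f)(x),\,(\uD f)(x)\bigr]_\H \, d\mu(x),
\end{align*}
and $(\uD f)(x) = V(\DH f(x)) \in \Ran(V) \subseteq \ov{\Ran(V)}$ for $\mu$-a.e. $x$. This is transparent on the core $\F C_{\rm b}^1(E;\Dom(V))$ and is then transferred to all of $\Dom(\uD)$ by closability of $\uD$ together with extraction of an almost-everywhere convergent subsequence. Applying the same two pointwise inequalities under the integral yields $|\Im l(f,f)| \leq \ga\,\Re l(f,f)$, and the claim for $\LB$ follows as above.

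The argument is essentially a direct calculation, and no serious obstacle is anticipated. The only genuine delicacy is in the second step: the pointwise coercivity bound depends crucially on the fact that (A3) is formulated on the range $\ov{\Ran(V)}$, precisely the set in which the values $(\uD f)(x)$ live, and on the uniformity of the constants $k$ and $\|B-B\s\|$ that allows these pointwise estimates to be integrated without loss.
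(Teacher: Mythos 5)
Your proof is correct, and the computation at its heart is the same as the paper's: a lower bound $k\|\cdot\|^2$ for the real part coming from (A3) and an upper bound $\tfrac12\|B-B^*\|\,\|\cdot\|^2$ for the imaginary part, yielding the sector of half-angle $\arctan\ga$. The packaging, however, is genuinely different. The paper estimates the numerical range of the operator itself: for $f\in\Dom(\LB)$ it writes $\uD f=F+iG$ with $F=\Re\,\uD f$ and $G=\Im\,\uD f$, uses that $B$ is a complexified real operator to obtain $|\Im[\LB f,f]|=|[(B-B^*)F,G]|\le\tfrac{1}{2k}\|B-B^*\|\,\Re[\LB f,f]$, and then invokes the Lumer--Phillips theorem, treating $\AB$ either by the same computation or via Lemma \ref{lem:part}. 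You instead split $B$ into its symmetric and antisymmetric parts, bound the numerical ranges of the forms $a$ and $l$ on their full form domains, and appeal to the form-generation theorem (the route through \cite{OU05} that the paper uses only to get generation, not the angle). Quantitatively the two estimates coincide, since $\tfrac12\big|[(B-B^*)u,u]\big|$ with $u=\uD f$ is exactly $\big|[(B-B^*)F,G]\big|$; what your version buys is a symmetric treatment of $\AB$ and $\LB$ and an explicit justification, left implicit in the paper but equally needed there, that $\uD f(x)\in\ov{\Ran(V)}$ for $\mu$-almost every $x$, which is what licenses the pointwise use of (A3). Note finally that both arguments verify precisely the numerical-range inclusion in the sector of half-angle $\arctan\ga$ and then cite standard generation theory (which turns this into analyticity on the complementary sector, as in the remark after Proposition \ref{prop:sectL}), so your proof is exactly as complete as the paper's with respect to the angle assertion.
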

\begin{proof}
We prove this for $\LB$; the proof of $\AB$ is similar
(alternatively, the result for $\AB$ follows from the result
for $\LB$ via Lemma \ref{lem:part}).

By the Lumer-Phillips theorem it suffices to show that $\LB$ has
numerical range in the sector of angle $\arctan\ga$. Using
that $B$ is a (complexified) real operator, for $f\in
\Dom(\LB)$ we have, with $F = \Re \uD f$ and $G = \Im \uD f,$
\beq\label{eq:coercive}
\bal \big|\Im [\LB f,f]\big|
  & =    \big|[ (B-B^*) F,G ]\big|
 \\ & \leq \frac12\|B-B^*\| \;
 \big(\|F\|^2 + \|G\|^2\big)
 \\ & \leq \frac{1}{2k}  \|B-B^*\|
     \big([B F, F]
        + [B G,  G] \big)
 \\ & =   \frac{1}{2k} \|B-B^*\|  \,\Re [\LB f,f].
\eal
\eeq
\end{proof}

The first main result of this section identifies $\PB$ as the second
quantisation of $\SB$.

\begin{theorem}\label{thm:2nd-q}
For all $t\ge 0$ we have $\PB(t) = \Ga(\SB(t))$.
\end{theorem}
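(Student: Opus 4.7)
The plan is to compare the two semigroups via their generators. Both $\PB$ and $t\mapsto \Ga(\SB(t))$ are $C_0$-contraction semigroups on $L^2$: for the latter, contractivity on each $L^p$ is part of the standing properties of second quantisation, while strong continuity at $t=0$ follows from the identity $\Ga(\SB(t))E_h = E_{\SB(t)h}$ of \eqref{eq:GammaExp}, the $L^2$-continuity of $h\mapsto E_h$, and $\SB(t)h\to h$ in $H$, extended by density and uniform boundedness. Write $-M$ for the generator of $\Ga(\SB(\cdot))$; it suffices to show $M = \LB$, and for this (since both are generators) it suffices to show $M\subseteq \LB$ on a core of $M$.

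The candidate core is the linear span $\calS$ of $\{E_h : h\in\Dom(\AB)\}$. Since $\Dom(\AB)$ is dense in $H$, the continuity of $h\mapsto E_h$ from $H$ to $L^2$ combined with the classical density of $\{E_h: h\in H\}$ in $L^2$ shows that $\calS$ is dense in $L^2$. Moreover, $\Dom(\AB)$ is invariant under $\SB(\cdot)$ (as $\AB$ is its generator), so $\calS$ is invariant under $\Ga(\SB(\cdot))$ by \eqref{eq:GammaExp}. The standard core theorem for $C_0$-semigroups then tells us that $\calS$ is a core for $M$.

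It remains to compute both $M E_h$ and $\LB E_h$ on $h\in\Dom(\AB)$ and verify they agree. For $\LB$: from \eqref{eq:exp-fc1} we have $\uD E_h = E_h\ot Vh$, hence $B\uD E_h = E_h\ot BVh$; since $h\in\Dom(\AB)$ gives $BVh\in\Dom(V\s)$, Lemma \ref{lem:Dster} (after an approximation of $E_h$ by cylindrical functions via closability of $\uD\s$) yields $E_h\in\Dom(\LB)$ with
$$\LB E_h \;=\; \uD\s(E_h\ot BVh) \;=\; E_h\phi_{V\s BVh} - [\uD E_h,BVh] \;=\; E_h\phi_{\AB h} - [Vh,BVh]\,E_h.$$
For $M$: differentiating $E_{\SB(t)h} = \exp(\phi_{\SB(t)h} - \tfrac12\|\SB(t)h\|^2)$ at $t=0$, using $\frac{d}{dt}\SB(t)h|_{t=0} = -\AB h$ (valid since $h\in\Dom(\AB)$), the chain rule gives
$$M E_h \;=\; -\frac{d}{dt}\bigg|_{t=0}E_{\SB(t)h} \;=\; E_h\bigl(\phi_{\AB h} - [\AB h,h]\bigr).$$
The two expressions coincide because $[\AB h,h]_H = [V\s BVh,h]_H = [BVh,Vh]_\H = [Vh,BVh]_\H$ in the real Hilbert space setting. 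Thus $M\subseteq \LB$, and since both generate $C_0$-semigroups we conclude $M = \LB$ and $\Ga(\SB(t)) = \PB(t)$ for all $t\ge 0$.

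The main obstacle is making the two computations on $E_h$ rigorous: the application of Lemma \ref{lem:Dster} to $E_h$ requires an approximation argument (since $E_h$ is not literally cylindrical), and the time derivative for $M E_h$ must be justified as a genuine $L^2$-limit rather than a pointwise one — both are routine but need the closability of $\uD$ respectively an explicit second-order Taylor control on $t\mapsto E_{\SB(t)h}$ using $\SB(t)h\to h$ and $t^{-1}(\SB(t)h-h)\to -\AB h$ in $H$.
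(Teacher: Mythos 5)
Your proof is correct and follows essentially the same route as the paper: both compute $\LB E_h$ via \eqref{eq:exp-fc1} and Lemma \ref{lem:Dster} and the generator of $\Ga(\SB(\cdot))$ by differentiating $E_{\SB(t)h}$, show they agree on the $\Ga(\SB)$-invariant core $\lin\{E_h: h\in\Dom(\AB)\}$, and conclude equality from the fact that both operators are generators.
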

\begin{proof}
We recall from Lemma \ref{lem:part} that $\PB(t)\phi_h =
\SB(t)h$ for all $h \in H$.

First we check that for all $h\in \Dom(\AB)$, the functions
$E_h \in L^2$ are in the domains of $\LB$ and $\widetilde
\LB$, where $-\widetilde \LB$ is the generator of $\Ga(\SB)$,
and that both generators agree on those functions. Using
\eqref{eq:exp-fc1} and Lemma \ref{lem:Dster} we obtain
 \begin{align*}
\LB E_h
 & = \uD ^* B \uD  E_h
\\ &  = \uD ^* ( E_h \ot BVh )
\\ &  = E_h \phi_{V^*BVh} - [BV h,Vh]E_h
\\ &  = (\phi_{\AB h} - [\AB h,h] ) E_h,
 \end{align*}
while on the other hand, using \eqref{eq:exponent} and
\eqref{eq:GammaExp} combined with a simple approximation argument, we have
 \begin{align*}
\widetilde \LB E_h
 & = \lim_{t \downarrow 0} \tfrac1t (E_{S(t)h} - E_h)
 \\ & = E_h \,\frac{d}{dt}\Big|_{t=0} \Big(\phi_{\SB(t)h}- \frac12\|\SB(t)h\|^2\Big)
 \\ & = (\phi_{\AB h} - [\AB h,h]) E_h.
 \end{align*}
 The set $\lin \{E_h : h \in \Dom(\AB)\}$ is dense in $L^2$
 and invariant under the semigroup
$\Ga(\SB).$ As a consequence, this set is a core for
$\Dom(\widetilde \LB)$. It follows that $\Dom(\widetilde
\LB)\subseteq \Dom(\LB)$. Since both $-\widetilde \LB$ and
$-\LB$ are generators this implies
 $\Dom(\widetilde \LB)= \Dom(\LB)$ and therefore  $\widetilde \LB = \LB$.
\end{proof}

So far we have considered $\PB$ as a $C_0$-semigroup in $L^2$. Having
identified $\PB$ as a second quantised semigroup on $L^2$, we are in a
position to prove that $\PB$ extends to the spaces $L^p$.

 \begin{theorem}\label{thm:sector}\label{thm:D(L)dense} For $1 \leq p <\infty$,
the semigroup $\PB$ extends to a $C_0$-semigroup of positive
contractions on $L^p$ satisfying
$\|P(t)f\|_\infty \leq \|f\|_\infty$ for $f\in L^\infty.$ The measure
$\mu$ is an invariant measure for $\PB,$ i.e.,
 $$\int_E \PB(t) f \,d\mu = \int_E f \,d\mu,
   \qquad f \in L^p,\ t \geq 0.$$
For $1<p<\infty$, $\PB$ is an analytic $C_0$-contraction semigroup on
$L^p$.
 \end{theorem}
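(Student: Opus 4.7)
The plan rests on the identification $\PB(t) = \Ga(\SB(t))$ proved in Theorem~\ref{thm:2nd-q}, which reduces the entire statement to general properties of second quantisation (recalled in Section~\ref{sec:examples}) combined with what we already know about $\SB$. First I would establish the $L^p$-extension with positivity and contractivity. Since $\SB(t)$ is a contraction on $H$, the general fact collected around \eqref{eq:quantidentities} says that $\Ga(\SB(t))$ extends to a positive contraction on $L^p$ for every $1\le p\le\infty$. The semigroup law is then inherited from $\Ga(\SB(s+t)) = \Ga(\SB(s))\Ga(\SB(t))$. The $L^\infty$-contractivity follows from positivity together with $\Ga(T)\one = \one$ for every contraction $T$ (true since $\one$ spans $H^{(0)}$): if $|f|\le \|f\|_\infty$ then $|\PB(t)f|\le \PB(t)|f|\le \|f\|_\infty\,\one$.

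Invariance of $\mu$ follows, for $f\in L^2$, from
$$\int_E \PB(t)f\,d\mu = [\PB(t)f,\one] = [f,\Ga(\SB(t)^*)\one] = [f,\one] = \int_E f\,d\mu,$$
using $(\Ga(T))^* = \Ga(T^*)$ and $\Ga(T^*)\one=\one$, and then extends to $L^p$ by $L^p$-contractivity and density. For strong continuity on $L^p$, the linear span of the exponentials $\{E_h : h \in H\}$ is dense in $L^p$ (cf.\ the discussion after \eqref{eq:exponent}), and \eqref{eq:GammaExp} gives $\PB(t)E_h = E_{\SB(t)h}$. Since $h\mapsto E_h$ is continuous from $H$ into $L^p$ (by the standard Gaussian computation using $\phi_{h_n}\to \phi_h$ in every $L^q$) and $t\mapsto \SB(t)h$ is continuous in $H$, the map $t\mapsto \PB(t)E_h$ is continuous in $L^p$; the $L^p$-contractivity then upgrades this to strong continuity on all of $L^p$.

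For analyticity on $L^p$ when $1<p<\infty$, Proposition~\ref{prop:sector} gives that $\SB$ extends to a bounded analytic $C_0$-contraction semigroup on a sector $\Sigma_\theta$ with $\theta = \pi/2 - \arctan\ga > 0$, so that $\SB(z)$ is a contraction on the complexification of $H$ for every $z\in \Sigma_\theta$. Hence $\Ga(\SB(z))$ is a contraction on $L^p$ for every such $z$ and every $1\le p\le\infty$. For fixed $h$, the map $z\mapsto \Ga(\SB(z))E_h = E_{\SB(z)h}$ is holomorphic from $\Sigma_\theta$ into $L^p$, since $z\mapsto \SB(z)h$ is $H$-holomorphic and $h\mapsto E_h$ is holomorphic from the complexification of $H$ into $L^p$ by termwise Taylor expansion of the exponential. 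A standard weak-holomorphy argument, using density of $\lin\{E_h\}$ in $L^p$ together with the uniform bound $\|\Ga(\SB(z))\|_{\L(L^p)}\le 1$, then produces holomorphy of $z\mapsto \PB(z)f$ for every $f\in L^p$.

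The main subtlety is this last analyticity step: pointwise holomorphy on a dense subspace plus uniform boundedness does not automatically yield strong holomorphy of the operator-valued map, so one must argue via weak holomorphy (equivalent to strong holomorphy on a complex Banach space), or alternatively via Stein interpolation between the $L^2$-analyticity from Proposition~\ref{prop:sector} and the $L^1$/$L^\infty$-contractivity already established.
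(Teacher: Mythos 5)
Your treatment of the $L^p$-extension, positivity, $L^\infty$-contractivity, invariance of $\mu$, and strong continuity is fine and essentially the paper's argument (the paper cites general second quantisation facts and uses the same adjoint identity $\PB\s(t)\one=\one$ for invariance). The problem is the analyticity step. Your key claim is that since $\SB(z)$ is a contraction on the complexification of $H$ for $z$ in a sector, $\Ga(\SB(z))$ is a contraction on $L^p$ for every $1\le p\le\infty$. This is false for $p\ne 2$: the fact that second quantisation of a contraction is an $L^p$-contraction is a statement about \emph{real} contractions (it rests on positivity/Mehler's formula), and it does not extend to complex contractions. The second quantisation of a complex Hilbert-space contraction need not even be bounded on $L^p$; the complex contractions $T$ for which $\Ga(T)$ is $L^p$-contractive form a strictly smaller class (Epperson's theorem). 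The classical Ornstein--Uhlenbeck semigroup already shows this: $e^{-z}I$ is a contraction on $H_\C$ for all $\Re z\ge 0$, yet $\Ga(e^{-z}I)$ is analytic and contractive on $L^p$ only in a sector of angle strictly less than $\tfrac12\pi$ when $p\ne2$. So the uniform bound $\|\Ga(\SB(z))\|_{\L(L^p)}\le 1$ on the full sector of $L^2$-analyticity, which your weak-holomorphy argument needs, is simply not available.

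You also misidentify the subtlety: the passage from holomorphy on a dense subspace plus a uniform bound to strong holomorphy is unproblematic (weak holomorphy against a norming set plus local boundedness suffices); what is missing is the uniform bound itself. The alternative you mention in your last sentence --- Stein interpolation between the $L^2$-analyticity of Proposition~\ref{prop:sector} and the contractivity of $\PB$ on the $L^p$-scale for real $t$, combined with duality --- is exactly the paper's proof, and it necessarily yields analyticity only on a smaller sector than the $L^2$ one (the paper's subsequent remark about improving the angle reflects precisely this loss). As written, your primary argument does not close, and the Stein interpolation route should be promoted from a parenthetical alternative to the actual proof.
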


 \begin{proof}
The extendability to a $C_0$-contraction semigroup on $L^p$, as well as
the $L^\infty$-contractivity and positivity
follow from general results on second quantisation. The invariance of
$\mu$ follows from
$$\int_E \PB(t)f \,d\mu
  = \int_E f \, \PB^*(t)\one\,d\mu
  = \int_E f \,d\mu, \quad f \in L^p.
$$
Here we use that $\PB^*$ is a second quantised semigroup as
well, and therefore satisfies $\PB\s(t)\one=\one$ for all
$t\ge 0$.

It remains to prove the last statement. We have seen in
Proposition \ref{prop:sector} that $\PB$ extends to an
analytic $C_0$-contraction semigroup on $L^2$. The
extension to an analytic $C_0$-contraction semigroup on
$L^p$, $1<p<\infty$, follows from a standard argument
involving the Stein interpolation theorem and duality.
\end{proof}

 \begin{remark}
We mention that a different argument to establish analyticity
in $L^p$ has been given in the case of Ornstein-Uhlenbeck
semigroups in \cite{CFMP05, MN1}. This argument also works in
the more general setting considered here and yields an angle
of analyticity which is better than the one obtained by Stein
interpolation. For Ornstein-Uhlenbeck semigroups (for which we
have $B+B^*=I,$ see \cite{MN1}),  this angle is optimal.
 \end{remark}

\begin{definition} On $L^p$ we define the operator $\LB$
as the negative generator of the semigroup $\PB$.
\end{definition}

 \blem \label{lem:Fcore} For all $1<p<\infty$,
$\F C_{\rm b}^\infty(E;\Dom(\AB))$ is a $\PB$-invariant core for
$\D(\LB)$. Moreover, for $f,g \in \F C_{\rm b}^\infty(E;\Dom(\AB)) $ and $\psi\in C_{\rm
b}^\infty(\R)$ we have
\ben
\item[\rm(1)] {\rm (Product rule)} $fg\in \F C_{\rm b}^\infty(E;\Dom(\AB))$ and
$$ \LB(fg) = f \LB g + g \LB f - [(B+B^*)\uD f, \uD g];$$
\item[\rm(2)] {\rm (Chain rule)} $\psi\circ
f\in\F C_{\rm b}^\infty(E;\Dom(\AB))$ and
$$\LB (\psi\circ f) = (\psi' \circ f) \LB f
 - (\psi''\circ f) [ B \uD f, \uD f].$$
\een

 \elem

 \begin{proof}
First we show that $\F C_{\rm b}^\infty(E;\Dom(\AB))$ is
contained in $\D(\LB)$; we thank Vladimir Bogachev for
pointing out an argument which simplifies our original proof.
Pick a function $f\in \F C_{\rm b}^\infty(E;\Dom(\AB))$ and
notice that $f \in \Dom(\LB) \cap L^p$. The space $L^p$ being
reflexive, by a standard result from semigroup theory (cf.
\cite {ButBer}) it suffices to show that
$$ \limsup_{t\da 0} \frac1t \n P(t)f-f\n_p < \infty.$$
Using that $\LB = \uD\s B \uD$ in $L^2$, an explicit calculation using Lemma
\ref{lem:Dster} shows that $\LB f \in L^2\cap L^p$.
Moreover, in $L^2$ we have the identity
$$ \frac1t (P(t)f-f) = \frac1t \int_0^t P(s)Lf\,ds.$$
Since $Lf\in L^p$, the right-hand side can be interpreted as a Bochner integral
in $L^p$, which for $0<t\le 1$ can be estimated in $L^p$ by
$$\Big\n \frac1t \int_0^t P(s)Lf\,ds\Big\n_p \le \n Lf\n_p.$$
This gives the desired bound for the limes superior.

To show that $\F C_{\rm b}^\infty(E;\Dom(\AB))$ is invariant under $\PB,$ we take $f$ of the form
$$f = \varphi(\phi_{h_1}, \ldots, \phi_{h_n}),$$
with $\varphi \in C_{\rm b}^\infty(\R^n)$ and $h_1, \ldots, h_n \in
\Dom(\AB).$ Let $R(t) := \sqrt{I - \SB^*(t)\SB(t)}.$  By Mehler's
formula, for $\mu$-almost all $x\in E$ we have
\beq\label{eq:PBcylindrical}
\bal
 \PB(t) f(x) &= \int_E \varphi( \phi_{\SB(t)h_1}(x) + \phi_{R(t)h_1}(y),
     \ldots  \\ & \qquad\qquad \ldots,        \phi_{\SB(t)h_n}(x) +
          \phi_{R(t)h_n}(y))\,d\mu(y)
      \\&    = \psi_t(\phi_{\SB(t)h_1}(x), \ldots, \phi_{\SB(t)h_n}(x)),
 \eal
\eeq
where $$\psi_t(\xi_1,\ldots,  \xi_n)
   = \int_E \varphi( \xi_1 + \phi_{R(t)h_1}(y),
     \ldots,           \xi_n +
          \phi_{R(t)h_n}(y))\,d\mu(y).$$
Since $\psi_t \in C_{\rm b}^\infty(\R^n)$ and $\SB(t)h_j \in \Dom(\AB)$ for
$j=1,\ldots, n,$ it follows that the subspace $\F C_{\rm b}^\infty(E;\Dom(\AB))$ is invariant under $\PB.$ Since
it is dense in $L^p$ and contained in $\D(\LB)$, it is a core for $\D(\LB)$.

The identities (1) and (2) follow by direct computation, using
the identity $\LB = \uD^* B \uD$ and Lemma \ref{lem:Dster}.
 \end{proof}

\begin{remark}The same proof shows that $\F C_{\rm
b}^\infty(E;\Dom(\AB^k))$ is a $\PB$-invariant core for
$\D(\LB)$ for every $k \geq 1.$ \end{remark}

\section{The operator $\uLB$}
\label{sec:DDB}

Our next aim is to give a rigorous description of the operator
$\uLB$ on the spaces $\overline{\Ran_p(\uD)},$ $1 < p
<\infty,$ where the closure is taken in $\uL^p:=
L^p(E,\mu;\H)$.

On $\H$ and $\uL^2$ we consider the sesquilinear forms $ \ua :
\Dom(V^*) \times \Dom(V^*) \to \C$,
$$  \ua(u_1,u_2) := [V^*u_1,V^*u_2]
$$
and $\ul : \Dom(\uD ^*)\times \Dom(\uD ^*) \to \C$,
$$\ul(F_1,F_2) : = [\uD ^*F_1,\uD ^*F_2].
$$
Here, $\uD\s: \Dom(\uD\s)\subseteq \uL^2\to L^2$ is the adjoint of the
operator $\uD:\Dom(\uD)\subseteq L^2\to \uL^2$. The forms $a$ and $l$
are closed, densely defined and sectorial. The operators associated
with these forms are denoted by $\uA_I$ and $\uL_I$ respectively,
with domains $\Dom(\uA_I)$ and $\Dom(\uL_I)$. We may write
$$
\uA_I = VV\s, \qquad  \uL_I = \uD \uD \s
$$
with similar justifications as before.
These operators are self-adjoint; see e.g. \cite[Proposition
1.31]{OU05}.
We introduce next the operators
$$\bal \Dom(\uAB) & := \{h\in \H: \ Bh\in\Dom(\uA_I)\}, \quad &&\uAB := \uA_I B;\\
       \Dom(\uLB) & := \{F\in \uL^2: \ BF\in \Dom(\uL_I)\}, \quad &&\uLB :=  \uL_I B.
\eal
$$
Note that
$$
\uAB  = VV\s B, \qquad \uLB = \uD  \uD \s B.
$$
It follows from standard operator theory \cite[Lemma 4.1]{AKM} that
$\uAB$ and $\uLB$ are closed and densely defined and satisfy
$$ \uAB = (B\s \uA_I)\s, \qquad \uLB = (B\s \uL_I)\s.$$

\bpr\label{prop:sectL} The operators $\uAB$ and $\uLB$ are
sectorial on $\H$ and $\uL^2$ of angle $\arctan\ga,$ where $ \ga :=
\tfrac{1}{2k} \n B-B\s\n.$ For all
$u\in \Dom(\uAB)$ we have $\one\ot u\in \Dom(\uLB)$ and
$$\uLB(\one\ot u) = \one\ot \uAB u.$$
\epr
 \begin{proof}
Writing $v := \Re u$ and $w:= \Im u$, by estimating as in \eqref{eq:coercive}
we obtain
$$
|\Im [B u,u]|
\le \frac{1}{2k}\|B-B^*\|  \Re [B  u, u].
$$
This shows that the numerical range of $B$ is contained in the closed
sector around $\R_+$ of angle $\arctan \ga$. The same is true
for the operator $B$ as an operator acting on $\uL^2$. Hence it follows
from \cite[Proposition 7.1]{AMN} (in which `positive' may be weakened
to `non-negative') that the operators $\uAB = \uA_I B$ and $\uLB =
\uL_I B$ are sectorial of angle $\arctan \ga$.
The final identity follows from
 \begin{align*}
  \uLB (\one \ot u)
   = \uD\uD^* (\one \ot Bu)
   = \uD  (\phi_{V^*Bu})
   = \one \ot VV^*Bu = \one\ot \uAB u.
 \end{align*}
\end{proof}

As a consequence, $-\uAB$ and $-\uLB$ generate bounded
analytic $C_0$-semigroups of angle $\rm{arccot}\,\ga$ on $\H$
and $\uL^2.$ In what follows we denote these semigroups by
$\uSB$ and $\uPB$.

\blem\label{lem:AVcommute} If $h \in \Dom(\AB)$ and $\AB h \in \Dom(V),$
then $V h \in \Dom(\uAB)$ and $$\uAB V h = V \AB h.$$
 \elem
 \begin{proof}
Since $h\in \Dom(\AB)$, the definition of $\AB$ as the operator
associated with the form $(h,g)\mapsto [BV h,V g]$ implies that $h \in
\Dom(V )$, $BV h \in \Dom(V ^*)$, and $\AB h = V ^*(BV h).$

To check that we have $V h \in \Dom(\uAB)$, in view of the
identity $\uAB = (B^*\uA_I)^*$ we must find $h' \in \H$ such
that $[B^*\uA_I g,V h] = [g,h']$ for all $g\in \Dom(\uA_I).$
But $h':= V \AB h$ does the job, since $[g,V \AB h] = [g, V V
^*BV h] = [B^*\uA_I g,V h];$ this implies that $V h \in
\Dom(\uAB)$ and $\uAB V h = V \AB h.$
\end{proof}

\blem\label{lem:SVcommute}
 For all $h\in \Dom(V )$ and $t\ge 0$
we have $\SB(t)h\in \Dom(V )$
and $$V  \SB(t)h = \uSB(t)V h.$$
\elem
\bpf
We may assume that $t>0$.

First let $g\in \Dom(\AB^2)$. Then $\AB g\in \Dom(\AB)\subseteq\Dom(V
)$, and therefore Lemma \ref{lem:AVcommute} implies that $V g\in
\Dom(\uAB)$ and $ \uAB V g = V \AB g.$ For $\l>0$ it follows that
$(I+ \l \uAB)V g = V (I + \l \AB)g$. Applying this to $g= (I + \l
\AB)^{-1}h$ with $h\in\Dom(\AB)$ we obtain
$$ V (I + \l \AB)^{-1}h = (I + \l \uAB)^{-1}V h.$$
Taking  $\l = \frac{t}{n}$ and repeating this argument $n$ times  we
obtain, for all $h\in \Dom(\AB)$,
$$ V (I + \tfrac{t}{n} \AB)^{-n} h =
     (I + \tfrac{t}{n}\uAB)^{-n} V h.$$
Taking limits $n\to\infty$ and using  the closedness of $V $, we
obtain $\SB(t)h\in D(V )$ and
$$ V \SB(t)h = \uSB(t)V h.$$
We are still assuming that $h\in \Dom(\AB)$. However, this assumption
may now be removed by recalling the fact that $\Dom(\AB)$ is a core
for $\Dom(V)$.
\end{proof}

\begin{lemma}\label{lem:uABinjective}
For all $t\ge 0$ we have $\uSB(t)\ov{\Ran(V)} \subseteq \ov{\Ran(V)}$.
Moreover, the part of
 $\uAB$ in $\ov{\Ran(V)}$ is injective.
\end{lemma}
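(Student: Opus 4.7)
The plan is to treat the two claims in turn; both are rather short.

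\medskip\noindent
\emph{Invariance.} This assertion is essentially a restatement of Lemma~\ref{lem:SVcommute}. That lemma yields $\uSB(t)Vh = V\SB(t)h\in\Ran(V)$ for every $h\in\Dom(V)$, so $\uSB(t)$ maps the dense subset $\Ran(V)$ of $\ov{\Ran(V)}$ into itself. Since $\uSB(t)$ is a bounded operator on $\H$ and $\ov{\Ran(V)}$ is closed, the inclusion $\uSB(t)\ov{\Ran(V)}\subseteq \ov{\Ran(V)}$ follows at once.

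\medskip\noindent
\emph{Injectivity.} Rather than going through the semigroup I would argue directly. Take $u\in \Dom(\uAB)\cap \ov{\Ran(V)}$ with $\uAB u = 0$. Unravelling $\uAB = VV\s B$, the assumption $u\in\Dom(\uAB)$ forces $Bu\in\Dom(V\s)$ with $V\s Bu\in\Dom(V)$, while $\uAB u = 0$ says $VV\s Bu = 0$. Pairing with $Bu$ yields
\[
    \|V\s Bu\|^2 = [VV\s Bu, Bu] = 0,
\]
so $V\s Bu = 0$, that is $Bu \in \Null(V\s) = \ov{\Ran(V)}^\perp$. Since $u\in\ov{\Ran(V)}$ this in particular gives $[Bu,u] = 0$.

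\medskip\noindent
\emph{Finish via coercivity.} The complexified coercivity assumption $\Re[BVh,Vh]\ge k\|Vh\|^2$ for $h\in\Dom(V)$ extends, by boundedness of $B$ and density of $\Ran(V)$ in $\ov{\Ran(V)}$, to
\[
    \Re[Bu,u] \ge k\|u\|^2, \qquad u\in\ov{\Ran(V)}.
\]
Combined with $[Bu,u] = 0$ this forces $u=0$, proving that the part of $\uAB$ in $\ov{\Ran(V)}$ is injective. I do not foresee a real obstacle; the only step worth writing carefully is the density extension of the coercivity bound to $\ov{\Ran(V)}$, which is a one-line estimate using $\|B\|<\infty$.
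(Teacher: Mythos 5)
Your proof is correct and follows essentially the same route as the paper: the invariance is deduced from the intertwining relation $\uSB(t)Vh=V\SB(t)h$ of Lemma~\ref{lem:SVcommute}, and injectivity comes from $\|V\s Bu\|^2=[VV\s Bu,Bu]=0$, hence $Bu\perp\ov{\Ran(V)}$, so $[Bu,u]=0$ and coercivity of $B$ on $\ov{\Ran(V)}$ gives $u=0$. Your explicit density extension of the coercivity bound merely spells out what the paper's Assumption (A3) takes as understood.
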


 \bpf
The first assertion follows from Lemma \ref{lem:SVcommute}.
Suppose that $\uAB u = VV^*B u = 0$ for some $u$ belonging to the domain
of the part of $\uAB$ in
$\ov{\Ran(V)}$. Then $\|V^*B u\|^2 = 0,$ so $Bu\in
\Null(V^*)$. Thus $[Bu, Vh]=0$ for all $h\in \Dom(V)$.
Since $u \in \ov{\Ran(V)}$ it follows that $[Bu,u] = 0$, and therefore
$u=0$ by the coercivity of $B$ on $\ov{\Ran(V)}.$
 \epf

Next we show that the semigroups $\uPB$ and $\PB\otimes \uSB$ agree
on $\ov{\Ran(\uD)}$. We need two lemmas which are formulated, for
later reference, for the $L^p$-setting.

\begin{lemma}\label{lem:polyn-core}
For $1<p<\infty$, $\F C_{\rm b}^\infty(E;\Dom(\AB))$
 is a core for $\D(\uD)$.
\end{lemma}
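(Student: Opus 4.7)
The plan is to show that $\F C_{\rm b}^\infty(E;\Dom(\AB))$ is dense in $\F C_{\rm b}^1(E;\Dom(V))$ in the graph norm of $\uD$; since $\uD$ on $L^p$ was defined as the closure of the cylindrical operator on $\F C_{\rm b}^1(E;\Dom(V))$, the latter space is by construction a core, and the lemma will follow. I would reduce in two independent steps: first smoothing the scalar function $\varphi$, then approximating the vectors $h_j\in \Dom(V)$ by elements of $\Dom(\AB)$.

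For the smoothing step, take $f=\varphi(\phi_{h_1},\ldots,\phi_{h_n})$ with $\varphi\in C_{\rm b}^1(\R^n)$ and $h_j\in\Dom(V)$, and mollify: set $\varphi^{(k)}:=\varphi*\rho_k$ with $\rho_k$ a standard compactly supported mollifier on $\R^n$. Then $\varphi^{(k)}\in C_{\rm b}^\infty(\R^n)$, the sup-norms of $\varphi^{(k)}$ and $\partial_j\varphi^{(k)}=(\partial_j\varphi)*\rho_k$ are controlled by the sup-norms of $\varphi$ and $\partial_j\varphi$, and $\varphi^{(k)}\to\varphi$, $\partial_j\varphi^{(k)}\to\partial_j\varphi$ locally uniformly. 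Because $(\phi_{h_1},\ldots,\phi_{h_n})$ is Gaussian (hence tight and with moments of all orders), dominated convergence applied to $f^{(k)}:=\varphi^{(k)}(\phi_{h_1},\ldots,\phi_{h_n})$ yields $f^{(k)}\to f$ in $L^p$ and
\beqw
\uD f^{(k)}=\sum_{j=1}^n \partial_j\varphi^{(k)}(\phi_{h_1},\ldots,\phi_{h_n})\ot Vh_j\;\longrightarrow\;\uD f
\eeqw
in $\uL^p$. So it suffices to approximate functions of the form $f=\varphi(\phi_{h_1},\ldots,\phi_{h_n})$ with $\varphi\in C_{\rm b}^\infty(\R^n)$ and $h_j\in \Dom(V)$.

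For the vector approximation, I use the fact (noted after the definition of $\AB$ in Section \ref{sec:DBD}, via \cite[Lemma 1.25]{OU05}) that $\Dom(\AB)$ is a core for $V$: choose $h_j^{(k)}\in\Dom(\AB)$ with $h_j^{(k)}\to h_j$ in $H$ and $Vh_j^{(k)}\to Vh_j$ in $\H$. Define $f^{(k)}:=\varphi(\phi_{h_1^{(k)}},\ldots,\phi_{h_n^{(k)}})\in \F C_{\rm b}^\infty(E;\Dom(\AB))$. Since $h\mapsto\phi_h$ is an isometry $H\to L^2(E,\mu)$ and the $L^q$-norms on the first Wiener-It\^o chaos are mutually equivalent for all $1\le q<\infty$, one has $\phi_{h_j^{(k)}}\to\phi_{h_j}$ in every $L^q$. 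The uniform boundedness and continuity of $\varphi$ and its partial derivatives then yield, by dominated convergence, $f^{(k)}\to f$ in $L^p$, and writing
\beqw
\uD f^{(k)}-\uD f=\sum_{j=1}^n\bigl[\partial_j\varphi(\phi_{h_1^{(k)}},\ldots)-\partial_j\varphi(\phi_{h_1},\ldots)\bigr]\ot Vh_j^{(k)}+\sum_{j=1}^n \partial_j\varphi(\phi_{h_1},\ldots)\ot(Vh_j^{(k)}-Vh_j),
\eeqw
each term tends to zero in $\uL^p$ (the first by dominated convergence in $L^p$, using the $L^\infty$-bound on $\partial_j\varphi$ and boundedness of $\|Vh_j^{(k)}\|$; the second because $Vh_j^{(k)}\to Vh_j$ in $\H$ and the scalar factor is bounded). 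A diagonal argument combining the two approximation procedures then produces a sequence in $\F C_{\rm b}^\infty(E;\Dom(\AB))$ converging to an arbitrary $f\in\F C_{\rm b}^1(E;\Dom(V))$ in the graph norm of $\uD$, which by the density of the latter space completes the proof. The only routine subtlety is checking integrability bounds for the dominated convergence arguments; the Gaussian integrability of the $\phi_h$'s and the uniform sup-norm bounds on the mollified $\varphi^{(k)}$ handle this uniformly in $k$.
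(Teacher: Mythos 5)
Your proof is correct and follows essentially the same route as the paper: both arguments use that $\F C_{\rm b}^1(E;\Dom(V))$ is a core by the definition of $\uD$ as a closure, that $\Dom(\AB)$ is a core for $\Dom(V)$ to replace the directions $h_j$, and a mollification of $\varphi$ to upgrade $C_{\rm b}^1$ to $C_{\rm b}^\infty$. The only difference is the order of the two approximation steps (you mollify first and then approximate the $h_j$, the paper does the reverse), which is immaterial.
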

\bpf First let $f= \varphi(\phi_{h_1},\dots, \phi_{h_n})$ with
$\varphi\in C_{\rm b}^1(\R^n)$ and $h_1, \dots, h_n\in
\Dom(V)$. Choose sequences $(h_{jk})_{k\ge 1}$ in $\Dom(\AB)$
with $h_{jk}\to h_j$ in $\Dom(V)$ as $k\to\infty$. Then $f_k
\to f$ in $L^p$ and $\uD f_k \to \uD f$ in $\uL^p$, where $f_k
= \varphi(\phi_{h_{1k}},\dots.\phi_{h_{nk}})$. Since $\F
C_{\rm b}^1(E;\Dom(V))$ is a core for $\D(\uD)$, this proves
that $\F C_{\rm b}^1(E;\Dom(\AB))$ is a core for $\D(\uD)$.
Now a standard mollifier argument, convolving $\varphi$ with a
smooth function of compact support, shows that $\F C_{\rm
b}^\infty(E;\Dom(\AB))$ is a core for $\D(\uD)$. \epf

The next result is well known in the context of Ornstein-Uhlenbeck semigroups;
see, e.g., \cite[Lemma 2.7]{CG01}, \cite[Proposition 3.5]{MN2}.

\bthm \label{thm:uLsemigroup} For all $1<p<\infty$,
the semigroup $\PB\otimes \uSB$
restricts to a bounded analytic $C_0$-semigroup on $\overline{\Ran_p(\uD)}.$
For $f\in \D(\uD)$ and $t\ge 0$ we have $\PB(t)f \in
\D(\uD)$ and
$$ \uD \PB(t)f  = (\PB(t)\otimes \uSB(t))\uD f.$$
\ethm

\bpf
First we show that for all $f\in \D(\uD)$
we have $\PB(t)f \in
\D(\uD)$ and $ \uD \PB(t)f  = (\PB(t)\otimes \uSB(t))\uD f.$
Since $\uD$ is closed and $\F C_{\rm b}^\infty(E;\Dom(\AB))$ is a core for $\D(\uD)$ by Lemma \ref{lem:polyn-core}, it
suffices to check this for functions $f\in \F C_{\rm
b}^\infty(E;\Dom(\AB)).$

We use the notations of Lemma \ref{lem:Fcore}. By
\eqref{eq:PBcylindrical} and Lemma \ref{lem:SVcommute}, for
functions $f= \varphi(\phi_{h_1},\dots, \phi_{h_n})$  we have,
for $\mu$-almost all $x\in E$,
 \begin{align*}
 \uD \PB(t) f(x)
  &= \sum_{j=1}^n \partial_j
   \psi_t(\phi_{\SB(t)h_1}(x), \ldots, \phi_{\SB(t)h_n}(x))
     \ot V \SB(t)h_j
 \\&= \sum_{j=1}^n \int_E \partial_j
   \varphi(\phi_{\SB(t)h_1}(x) + \phi_{R(t)h_1}(y),\ \ldots
\\& \qquad\qquad\qquad     \ldots \ , \phi_{\SB(t)h_n}(x)
                             + \phi_{R(t)h_n}(y))\, d\mu(y)
           \ot \uSB(t) V h_j
  \\&= (\PB(t) \ot \uSB(t) )  \uD f (x).
 \end{align*}
This identity shows that $\PB(t)\ot \uSB$ maps
$\Ran_p(\uD)$ into itself,
and therefore $\PB\ot\uSB$ restricts to a bounded
$C_0$-semigroup on $\overline{\Ran_p(\uD)}$.
The invariance of $\overline{\Ran_p(\uD)}$ under the operators
$\PB(z)\otimes \uSB(z)$, where $z\in\C$ is in the sector of bounded
analyticity of $\PB$, follows by uniqueness of analytic continuation (consider
the quotient mapping from $\uL^p$ to $\uL^p/\overline{\Ran_p(\uD)}$).
\epf

In the next result we
return to the $L^2$-setting and show that the semigroups
$\PB\ot \uSB$ and $\uPB$ on $\uL^2$ agree on
$\overline{\Ran(\uD)}$.

\bthm \label{thm:DDsBsemigroup} Both $\uPB$ and $\PB\otimes \uSB$ restrict
to bounded analytic $C_0$-semigroups on $\ov{\Ran(\uD)}$, and their
restrictions coincide:
$$\uPB(t)F =\PB(t)\otimes
  \uSB(t)F, \quad F\in \ov{\Ran(\uD)}.$$
\ethm
 \begin{proof}
The invariance of $\ov{\Ran(\uD)}$ under $\PB\otimes \uSB$
follows from the previous theorem. Let us write $-N$ for the
generator of $\PB\otimes \uSB$ on $\ov{\Ran(\uD)}$. From
$V(\Dom(\AB^2))\subseteq \Dom(\uAB)$ (cf. the proof of Lemma
\ref{lem:SVcommute}) and $\F C_{\rm
b}^\infty(E;\Dom(\AB^2))\ot \Dom(\uAB) \subseteq \Dom(\LB) \ot
\Dom(\uAB)$ we see that the subspace $U := \{ \uD f : f \in \F
C_{\rm b}^\infty(E;\Dom(\AB^2)) \}$ is contained in $\Dom(N)$.
This subspace is dense in $\ov{\Ran(\uD)}$ since $\F C_{\rm
b}^\infty(E;\Dom(\AB^2))$ is a core for $\Dom(\LB)$ (by Lemma
\ref{lem:Fcore} and the remark following it) and $\Dom(\LB)$
is a core for $\Dom(\uD).$  Since $(\PB\otimes \uSB)U
\subseteq U$ by Theorem \ref{thm:uLsemigroup}, it follows that
$U$ is a core for $\Dom(N).$

For functions $f \in \F C_{\rm
b}^\infty(E;\Dom(\AB^2))$ we obtain
$$N \uD f = \uD \LB f = \uLB \uD f.$$
The first identity follows from
Theorem \ref{thm:uLsemigroup} and the second from a direct computation.
Alternatively, the second identity
can be deduced from the analogue of Lemma \ref{lem:AVcommute} for
$\uD$ and $\LB$.

Thus $N = \uLB$ on the core $U$ of $\Dom(N)$. It follows that
$\Dom(N)\subseteq \Dom(\uLB)$ and $N = \uLB$ on $\Dom(N)$. Let
$\l>0.$ Multiplying the identity $\l+N = \l+\uLB$ from the
right with $(\l+N)^{-1}$ and from the left with
$(\l+\uLB)^{-1},$ we obtain $(\l+N)^{-1}= (\l+\uLB)^{-1}$ on
$\ov{\Ran(\uD)}$. In particular, $ (\l+\uLB)^{-1}$ maps
$\ov{\Ran(\uD)}$ into itself. As in Lemma \ref{lem:SVcommute}
it follows that $\uPB$ leaves $\ov{\Ran(\uD)}$ invariant and
that the restriction of $\uPB$ to $\ov{\Ran(\uD)}$ equals the
semigroup generated by $-N$, which is $\PB\otimes
\uSB|_{\ov{\Ran(\uD)}}$.
\end{proof}

\begin{definition}\label{def:uPB} Let $1<p<\infty$. On $\overline{\Ran_p(\uD)}$
we define  $ \uPB := \PB\ot \uSB|_{\overline{\Ran_p(\uD)}}.$
The negative generator of $\uPB$ is denoted by $\uLB$.
\end{definition}

By Theorem \ref{thm:DDsBsemigroup}, for $p=2$ this definition
is consistent with the one given at the beginning of this section.

\section{Intermezzo I: $R$-boundedness and radonifying operators}
\label{sec:preliminaries1}

Before proceeding with the proofs of the main results we
insert a section containing a concise discussion of the
notions of $R$-boundedness, radonifying operators, and square
functions. For more information and further results we refer
to the excellent sources \cite{DHP, KuW} as well as the papers
\cite{vNW05,NVW} and the references given therein. The
notations in this section will be independent of those in the
previous ones.

\subsection{$R$-boundedness}

Throughout this section, unless otherwise stated $(M,\mu)$ is an
arbitrary $\sigma$-finite measure space and $\H$ is an arbitrary
Hilbert space. In analogy to previous notations we write $L^p:=
L^p(M,\mu)$ and $\uL^p := L^p(M,\mu;\H)$.

Let $X$ and $Y$ be Banach spaces and let $(r_j)_{j\ge 1}$ be a
sequence of independent {\em Rademacher variables}, i.e., $\P(r_j=
1) = \P(r_j=-1) = \tfrac12$ for each $j$.

A collection of bounded linear operators $\mathcal{T} \subseteq
\L(X,Y)$ is said to be {\em $R$-bounded} if there exists $C\ge 0$ such that
for all $k=1,2,\dots$ and  all choices of $x_1, \ldots, x_k \in X$ and $T_1,
\ldots, T_k  \in \mathcal{T}$
 we have $$\E \Big\|\sum_{j=1}^k r_j T_j x_j\Big\|^2
   \leq C^2 \E \Big\|\sum_{j=1}^k r_j  x_j\Big\|^2. $$
The smallest constant $C$ for which this
inequality holds is denoted by $R(\mathcal{T}).$ By the
Kahane-Khintchine inequalities one may replace the exponents 2 by
arbitrary $p\in [1,\infty);$ this only changes the value of the
constant $C.$ Every bounded subset of operators on a Hilbert space
is $R$-bounded. If $\mathcal{T}$ is $R$-bounded, then the closure
with respect to the strong operator topology of the absolutely
convex hull of $\mathcal{T}$ is $R$-bounded as well, with constant
at most $C$ (in the real case) or $2C$ (in the complex case). A
useful consequence of this is the following result \cite[Corollary
2.14]{KuW} which we formulate for real spaces $X$ and $Y$ (in the
complex case an extra constant $2$ appears).

 \begin{proposition} \label{prop:Rboundedintegral}
Let  $\mathcal{T}\subseteq \L(X,Y)$ be $R$-bounded, and let $f:M\to
\L(X,Y)$ be a function with values in $\mathcal{T}$ such that
$\xi\mapsto f(\xi)x$ is strongly  $\mu$-measurable for all $x\in E$.
For $\phi\in L^1$ define
$$T_{\phi,f}x := \int_M \phi(t) f(t)x \,d\mu(t), \qquad x\in X.$$
Then the collection $\{ T_{\phi,f} : \ \|\phi\|_{L^1} \leq
1\}$ is $R$-bounded in $\L(X,Y)$.
 \end{proposition}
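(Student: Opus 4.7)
My plan is to reduce the proposition to the already-recalled fact that the strong-operator-topology closure of the absolutely convex hull of an $R$-bounded family is itself $R$-bounded, with constant at most $R(\mathcal{T})$ in the real case. Concretely, I aim to show that for each $x\in X$, the vector $T_{\phi,f}x$ can be approximated in $Y$ by $Sx$ for some $S$ in the absolutely convex hull of $\mathcal{T}$ (with $S$ allowed to depend on $x$ and on $\phi$).

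Fix $\phi_1,\ldots,\phi_k\in L^1$ with $\|\phi_j\|_{L^1}\leq 1$ and $x_1,\ldots,x_k\in X$. Since $\mathcal{T}$ is in particular uniformly bounded, each integrand $t\mapsto \phi_j(t) f(t) x_j$ is Bochner integrable. Strong measurability of $f(\cdot)x_j$ yields essential separability of its range, and hence a standard partition-and-sample construction produces, for any $\varepsilon > 0$, a measurable partition $\{A_i^{j}\}_{i=1}^{N_j}$ of (essentially all of) $M$ and points $t_i^{j}\in A_i^{j}$ such that
\begin{equation*}
\Bigl\|T_{\phi_j,f}x_j - \sum_{i=1}^{N_j}\alpha_i^{j}\,f(t_i^{j})x_j\Bigr\|<\varepsilon, \qquad \alpha_i^{j}:=\int_{A_i^{j}}\phi_j\,d\mu.
\end{equation*}
Since $\sum_i|\alpha_i^{j}|\leq \|\phi_j\|_{L^1}\leq 1$, the operator $S_j:=\sum_i\alpha_i^{j}f(t_i^{j})$ lies in the absolutely convex hull of $\mathcal{T}$.

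Applying the absolutely-convex-hull $R$-bound to the finite family $S_1,\ldots,S_k$ gives
\begin{equation*}
\E\Bigl\|\sum_{j=1}^k r_j S_j x_j\Bigr\|^2\leq R(\mathcal{T})^2\,\E\Bigl\|\sum_{j=1}^k r_j x_j\Bigr\|^2,
\end{equation*}
and the triangle inequality together with $\|S_j x_j - T_{\phi_j,f}x_j\|<\varepsilon$ lets me pass to the limit $\varepsilon\to 0$ to obtain the same estimate with $T_{\phi_j,f}x_j$ in place of $S_j x_j$. The main technical point is the approximation step: one must arrange that the simple-function approximant of $f(\cdot)x_j$ has values of the exact form $f(t_i^{j})x_j$ rather than arbitrary vectors in $Y$, so that the coefficients $\alpha_i^{j}$ witness $S_j$ as a genuine element of the absolutely convex hull of $\mathcal{T}$. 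This is delivered by the usual Pettis-measurability construction combined with dominated convergence against the weight $|\phi_j|$.
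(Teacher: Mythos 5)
Your argument is correct and is exactly the route the paper has in mind: the paper deduces this proposition from the fact (stated just above it) that the strong-operator-topology closure of the absolutely convex hull of an $R$-bounded family is $R$-bounded, citing \cite[Corollary 2.14]{KuW}, and your partition-and-sample approximation of $T_{\phi,f}x$ by absolutely convex combinations $\sum_i \alpha_i f(t_i)x$ with $\sum_i|\alpha_i|\le 1$ is precisely the standard way that reduction is carried out. So the proposal matches the paper's approach, with the only difference being that you spell out the approximation step the paper leaves to the reference.
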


The next result may be known to specialists, but since we
couldn't find a reference for it we include a proof.

 \bpr\label{prop:Rtensor}
Let $1 \leq p< \infty.$ If $\mathcal{T} \subseteq \L(L^p)$ is
$R$-bounded and $\mathcal{S} \subseteq \L(\H)$ is bounded, then
 $\mathcal{T} \ot \mathcal{S} \subseteq \L(\uL^p)$ is $R$-bounded.
 \epr

 \bpf
Since $T \ot S = (T\ot I)(I \ot S)$ and $I \ot \mathcal{S}$ is $R$-bounded by the Fubini theorem, it suffices to show that $\mathcal{T} \ot I$ is $R$-bounded.

Let $(h_i)_{i=1}^n$ be an orthonormal system in $\H$ and let
$F_1,\dots, F_k$ be functions in $\uL^p$ of the form $F_j :=
\sum_{i=1}^n f_{ij} \ot h_i$. Let $(r_i)_{i\ge 1}$ and $(\widetilde r_i)_{i\ge
1}$ be
independent Rademacher sequences. Then, putting $g_i :=\sum_{j=1}^k
r_j T_j f_{ij}$,
 \begin{align*}
 \E\Big\| \sum_{j=1}^k r_j (T_j \ot I) F_j \Big\|_p^p
     & =  \E \Big\| \sum_{j=1}^k r_j \sum_{i=1}^n T_j f_{ij} \ot h_i   \Big\|_p^p
  \\ & =  \E \int_M  \Big\|\sum_{j=1}^k r_j \sum_{i=1}^n  T_j f_{ij} \ot h_i \Big\| ^p
  \,d\mu
  \\ & =  \E \int_M  \Big\|  \sum_{i=1}^n g_i \ot h_i \Big\| ^p   \,d\mu
  \\ & \eqsim  \E \int_M \widetilde\E
         \Big|  \sum_{i=1}^n \widetilde r_i g_i \Big|^p   \,d\mu
  \\ & =   \widetilde\E \E \Big\|
           \sum_{j=1}^k r_j T_j \Big(
            \sum_{i=1}^n \widetilde r_i f_{ij} \Big) \Big\|_p^p
  \\ & \lesssim  \widetilde\E \E \Big\|
           \sum_{j=1}^k r_j \Big(
            \sum_{i=1}^n\widetilde r_i f_{ij} \Big) \Big\|_p^p
  \\ & \eqsim  \E\Big\| \sum_{j=1}^k r_j  F_j \Big\|_p^p.
 \end{align*}
The last step follows by performing the computation in reverse
order. The result follows by an application of the
Kahane-Khintchine inequalities.
 \epf

We need the following duality result for $R$-bounded families
\cite[Proposition 3.5]{KaKuWe}.
Let $I_1 \in \L(L^2(E,\mu))$ be the orthogonal projection defined in
Section \ref{subsec:secQ} and let $I_X$ be the identity operator on
a Banach space $X.$ Then $X$ is said to be $K$-convex if the
operator $I_1 \ot I_X$ on $L^2(E,\mu) \ot X$ extends to a bounded
operator on the Lebesgue-Bochner space $L^2(E,\mu;X)$ (see, e.g.,
\cite{DJT,Pi89}).

\bpr \label{prop:Bconvex} If $X$ and $Y$ are $K$-convex Banach
spaces, then a family  $\mathscr{T}\subseteq\calL(X,Y)$ is
$R$-bounded if and only if the adjoint family
$\mathscr{T}\s\subseteq \calL(Y\s,X\s)$ is $R$-bounded. \epr

We shall apply this proposition to the $K$-convex spaces $X=L^p$ and
$Y=\uL^p$ for $1<p< \infty$.

\subsection{Radonifying operators}

It will be convenient to exploit the connection between square
functions and radonifying norms. Let $(\ga_n)_{n\ge 1}$ be a
Gaussian sequence, i.e., a sequence of independent standard
normal random variables. If $\HH$ is a Hilbert space and $X$
is a Banach space, we denote by $\ga(\HH,X)$ the completion of
the finite rank operators from $\HH$ to $X$ with respect to
the norm
$$ \Big\n \sum_{j=1}^k h_j\otimes x_j\Big\n_{\ga(\HH,X)}^2
= \E \Big\n \sum_{j=1}^k \ga_j x_j
\Big\n^2,
$$ where it is assumed that the vectors $h_1,\dots,h_k$ are orthonormal in
$\HH$. We have a continuous inclusion $\ga(\HH,X)\embed \L(\HH,X)$.
Operators in $\L(\HH,X)$ belonging to $\ga(\HH,X)$ are called {\em
radonifying}; this terminology is explained by the fact that an
operator $T\in \L(\HH,X)$ is radonifying if and only if there exists
a centred Gaussian Radon measure on $X$ whose covariance operator
equals   $TT\s$.

We continue with an observation about repeated radonifying norms which follows
from the Kahane-Khintchine inequalities and Fubini's theorem.
For a proof see, e.g., \cite{vNWeis}.

\begin{proposition}\label{prop:alpha}
Let $(S_1,\sigma_1)$, $(S_2,\sigma_2)$ be $\sigma$-finite
measure spaces, and let $1\le p<\infty$. The mapping
$$ f_1\otimes (f_2\otimes g) \mapsto(f_1\otimes f_2)\otimes g,\quad
 f_i \in L^2(S_i,\sigma_i),\ g\in \uL^p, $$
extends uniquely to an isomorphism of Banach spaces
$$\ga(L^2(S_1,\sigma_1), \ga(L^2(S_2,\sigma_2), \uL^p))\simeq
\ga(L^2(S_1 \times S_2,\sigma_1\otimes \sigma_2), \uL^p).
$$
\end{proposition}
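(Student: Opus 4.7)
The plan is to verify the claimed isomorphism by computing both $\gamma$-norms explicitly on a dense subspace of simple tensors, reducing the equivalence to a scalar moment estimate in $L^p$ which is then handled by Fubini and Kahane--Khintchine.

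First I would fix orthonormal bases $(e_i)_{i\ge 1}$ of $L^2(S_1,\sigma_1)$ and $(f_j)_{j\ge 1}$ of $L^2(S_2,\sigma_2)$, so that $(e_i\otimes f_j)_{i,j\ge 1}$ is an orthonormal basis of $L^2(S_1\times S_2, \sigma_1\otimes\sigma_2)$. A generic finite-rank element of $\ga(L^2(S_1\times S_2,\sigma_1\otimes\sigma_2),\uL^p)$ can then be written as $\widetilde T = \sum_{i,j} (e_i\otimes f_j)\otimes g_{ij}$ with $g_{ij}\in\uL^p$, while the corresponding finite-rank element on the iterated side is $T = \sum_i e_i\otimes T_i$ with $T_i := \sum_j f_j\otimes g_{ij} \in \ga(L^2(S_2,\sigma_2),\uL^p)$. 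The map in the proposition sends $T\mapsto \widetilde T$.

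Second, I would compute both norms. By the definition of the $\gamma$-norm, using independent Gaussian sequences $(\gamma_i)$ and $(\gamma_j')$,
\begin{align*}
\|T\|^2 = \E\Big\|\sum_i \gamma_i T_i\Big\|_{\ga(L^2(S_2,\sigma_2),\uL^p)}^2 = \E\E'\Big\|\sum_{i,j}\gamma_i\gamma_j' g_{ij}\Big\|_{\uL^p}^2,
\end{align*}
whereas $\|\widetilde T\|^2 = \E\|\sum_{i,j}\gamma_{ij} g_{ij}\|_{\uL^p}^2$ for a doubly-indexed Gaussian family $(\gamma_{ij})$. So the proposition reduces to the two-sided estimate
\begin{align*}
\E\E'\Big\|\sum_{i,j}\gamma_i\gamma_j' g_{ij}\Big\|_{\uL^p}^2 \eqsim \E\Big\|\sum_{i,j}\gamma_{ij}g_{ij}\Big\|_{\uL^p}^2.
\end{align*}

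Third, I would establish this equivalence by Fubini and Kahane--Khintchine. Raising to the $p$th power and invoking Kahane--Khintchine, both sides are equivalent to
\begin{align*}
\int_M \E\E'\Big\|\sum_{i,j}\gamma_i\gamma_j'g_{ij}(m)\Big\|_\H^p\,d\mu(m) \quad\text{and}\quad \int_M \E\Big\|\sum_{i,j}\gamma_{ij}g_{ij}(m)\Big\|_\H^p\,d\mu(m),
\end{align*}
respectively. In the Hilbert space $\H$ the second moment is computed by orthogonality, giving $\sum_{i,j}\|g_{ij}(m)\|_\H^2$ in both cases; Kahane--Khintchine in $\H$ then makes the $p$th moments equivalent to the $p/2$ power of this common quantity, so the two integrals are equivalent. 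Tracking the constants, the resulting equivalence is bijective on the finite-rank tensors, and by density of such tensors in both $\gamma$-spaces the map extends uniquely to an isomorphism of Banach spaces.

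The main step, in terms of content, is the scalar comparison $\E\E'\|\sum\gamma_i\gamma_j' v_{ij}\|^p \eqsim \E\|\sum\gamma_{ij}v_{ij}\|^p$ in $L^p(M;\H)$; the rest is bookkeeping. I do not expect a serious obstacle, since the product $\gamma_i\gamma_j'$ has the same covariance structure as $\gamma_{ij}$ and $\H$ being Hilbert lets Kahane--Khintchine close the gap between the $p$th and second moments pointwise in $m$ before integrating.
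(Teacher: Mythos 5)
Your argument is correct and is essentially the proof the paper has in mind: the paper gives no details, stating only that the result ``follows from the Kahane--Khintchine inequalities and Fubini's theorem'' (citing van Neerven--Weis), and your reduction via orthonormal bases to the comparison of $\E\E'\|\sum_{i,j}\gamma_i\gamma_j' g_{ij}\|^2$ with $\E\|\sum_{i,j}\gamma_{ij} g_{ij}\|^2$, settled pointwise in $m$ by orthogonality in $\H$ and Kahane--Khintchine, is exactly that route made explicit. The only ingredient worth flagging is that the passage from second to $p$-th moments on the iterated side uses Kahane--Khintchine for (decoupled) Gaussian chaos of order two, which is standard, and the density/extension step at the end is routine.
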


The next multiplier result is a slight extension of a result due to
Kalton and Weis \cite{KWinprep} and can be proved in the same way.

\begin{proposition}\label{prop:KW} Let $X$ and $Y$ be Banach spaces, and
let $K:M \to \calL(X,Y)$ be a function such that $K(\cdot)x$ is
strongly $\mu$-measurable for all $x\in X$. If the set $\mathscr{T}_K =
\{K(\xi): \ \xi\in M\}$ is $R$-bounded, then the mapping
$$T_K:  f(\cdot)\otimes x \mapsto f(\cdot)\otimes K(\cdot)x, \quad
 f\in L^2, x\in X, $$
extends uniquely to a bounded operator $T_K$ from $\ga(L^2,X)$
to $\ga(L^2,Y)$ of norm $\n T_K\n\le R(\mathscr{T}_K).$
\end{proposition}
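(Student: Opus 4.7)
The plan is to establish the norm bound $\|T_K u\|_{\ga(L^2,Y)}\le R(\mathscr{T}_K)\,\|u\|_{\ga(L^2,X)}$ on the dense subspace $L^2\otimes X\subseteq \ga(L^2,X)$, first for simple $\mathscr{T}_K$-valued multipliers $K$ and then for general $K$ by approximation, and finally to extend $T_K$ uniquely by density.

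For the simple case, take $u=\sum_{j=1}^n f_j\otimes x_j$ with $(f_j)$ orthonormal in $L^2$ and $K=\sum_{k=1}^N \mathbf{1}_{A_k}K_k$ with disjoint measurable $A_k$ (covering $M$, adjoining a zero piece if needed) and $K_k\in\mathscr{T}_K$. Then
$$T_K u=\sum_{k,j}(\mathbf{1}_{A_k}f_j)\otimes K_k x_j.$$
The Gram matrix of $\{\mathbf{1}_{A_k}f_j\}_{k,j}$ is block-diagonal in $k$ with $k$-block $G^{(k)}_{j,j'}=\int_{A_k}f_j\overline{f_{j'}}\,d\mu$, and $\sum_k G^{(k)}=I$ by orthonormality of $(f_j)$. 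Spectral decomposition of each block yields an orthonormal system $(e_{k,l})$ in $L^2$ and vectors $y_{k,l}\in X$ (linear combinations of the $x_j$) with $T_K u=\sum_{k,l}e_{k,l}\otimes K_k y_{k,l}$ and a direct covariance calculation gives $\E\|\sum_{k,l}\ga_{k,l}y_{k,l}\|^2=\|u\|_{\ga(L^2,X)}^2$. The Rademacher $R$-boundedness of $\mathscr{T}_K$ transfers to the corresponding Gaussian estimate (by conditioning on the moduli $(|\ga_{k,l}|)$ and applying the Rademacher bound to the i.i.d.\ signs, which are independent of the moduli), so
$$\|T_K u\|_{\ga(L^2,Y)}^2=\E\Big\|\sum_{k,l}\ga_{k,l}K_k y_{k,l}\Big\|^2\le R(\mathscr{T}_K)^2\,\|u\|_{\ga(L^2,X)}^2.$$

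For general $K$ I would use strong measurability of each $K(\cdot)x_j$, $j=1,\dots,n$, to select a sequence of $\mathscr{T}_K$-valued simple multipliers $K_N$ with $K_N(\xi)x_j\to K(\xi)x_j$ in $Y$ pointwise $\mu$-a.e.\ and bounded uniformly in $N$ and $\xi$; the simple-case estimate applies to each $T_{K_N}u$, and dominated convergence in the explicit finite-dimensional Gaussian expectation representing $\|T_{K_N}u-T_K u\|_{\ga(L^2,Y)}^2$ passes the bound to $T_K u$. Density of $L^2\otimes X$ in $\ga(L^2,X)$ together with this norm bound then yields the unique bounded extension.

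The main obstacle is the approximation step: strong measurability of each $K(\cdot)x$ only gives pointwise approximants in $Y$, not simple functions with values in $\mathscr{T}_K\subset \mathscr{L}(X,Y)$. My approach would be to partition $M$ via strong-operator $\e$-neighbourhoods of finitely many elements of $\mathscr{T}_K$ tested against $\{x_1,\dots,x_n\}$, and to use that the strong-operator closed absolutely convex hull of $\mathscr{T}_K$ is again $R$-bounded (losing at most a factor $2$ in the complex case) to absorb any enlargement forced by this selection.
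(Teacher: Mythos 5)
Your first step (simple, $\mathscr{T}_K$-valued multipliers) is correct and is the standard argument: the block-diagonal Gram computation, the identity $\E\|\sum_{k,l}\gamma_{k,l}y_{k,l}\|^2=\|u\|_{\gamma(L^2,X)}^2$, and the transfer of the Rademacher $R$-bound to Gaussian coefficients by conditioning on the moduli are all fine (repetition of the operators $K_k$ over the index $l$ is allowed by the definition of $R$-boundedness). Note that the paper itself gives no proof of Proposition \ref{prop:KW}; it only points to Kalton--Weis, so the comparison is with that argument rather than with anything in the text.

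The genuine gap is in the passage from simple to general $K$. The quantity $\|T_{K_N}u-T_Ku\|_{\gamma(L^2,Y)}$ is \emph{not} a finite-dimensional Gaussian expectation: once $K$ is no longer simple, $T_Ku$ is an operator of infinite rank, so the ``dominated convergence'' you invoke has nothing to act on; and a Cauchy argument for $(T_{K_N}u)_N$ fails too, since the $R$-bound of the differences $K_N-K_M$ only yields a bound of order $2R(\mathscr{T}_K)\|u\|$, not smallness. What your a.e.\ convergence together with the uniform bound does give, by a Fatou argument over finite orthonormal systems, is $\|T_Ku\|_{\gamma_\infty(L^2,Y)}\le R(\mathscr{T}_K)\|u\|_{\gamma(L^2,X)}$, where $\gamma_\infty$ is the larger space defined by the supremum over finite orthonormal systems --- and for general Banach spaces this cannot be upgraded: with $X=\R$, $Y=c_0$ and $K(\xi):=\big((\log(n+2))^{-1/2}r_n(\xi)\big)_{n\ge1}$ ($r_n$ the Rademacher functions on $[0,1]$), the range of $K$ is $R$-bounded and $K(\cdot)x$ is strongly measurable, yet $T_K(\one_{[0,1]}\otimes 1)=K(\cdot)1$ lies in $\gamma_\infty\setminus\gamma$, because the Gaussian series $\sum_n\gamma_n(\log(n+2))^{-1/2}e_n$ has a.s.\ bounded but a.s.\ divergent partial sums in $c_0$. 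This is precisely why the Kalton--Weis multiplier theorem is stated with $\gamma_\infty$ as target; the formulation with $\gamma$ is harmless in this paper because there $Y=\uL^p$ contains no copy of $c_0$ (so $\gamma_\infty=\gamma$), and in that setting your limiting step can be closed concretely via the square-function description of $\gamma(L^2,\uL^p)$ in Proposition \ref{prop:sqf-radonif} and scalar Fatou. Finally, your worry about absolutely convex hulls in the selection of approximants is unnecessary: since $\xi\mapsto(K(\xi)x_1,\dots,K(\xi)x_n)$ is essentially separably valued, the simple multipliers $K_N$ can be chosen with values in $\mathscr{T}_K$ itself.
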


\subsection{Square functions}

In this subsection we recall how
$R$-bounded families in $L^p$-spaces and radonifying operators
into $L^p$-spaces can be characterised by square functions.

The first result follows from a standard application of the
Kahane-Khintchine inequalities; see \cite{KuW}.

\begin{proposition}
\label{prop:R-bdd-SQF} A family $\mathscr{T}\subseteq
\calL(L^p,\uL^p)$ is $R$-bounded if and only if there exists a
constant $C$ such that for all $T_1,\dots,T_N \in \mathscr{T}$ and
$f_1,\dots, f_N\in L^p$,
$$
\Big\|  \Big(\sum_{n=1}^N \n T_n f_n\n^2\Big)^{1/2} \Big\|_p \le
C\Big\|  \Big(\sum_{n=1}^N |f_n|^2\Big)^{1/2} \Big\|_p.
$$
\end{proposition}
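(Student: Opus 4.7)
The plan is to use the Khintchine--Kahane inequalities to reinterpret the Rademacher sums appearing in the definition of $R$-boundedness as square functions in $L^p$. The key identity is that for any Banach space $X$ and any $1\le p<\infty$, one has
\[
\E\Big\|\sum_{n=1}^N r_n x_n\Big\|^2 \eqsim \Big(\E\Big\|\sum_{n=1}^N r_n x_n\Big\|^p\Big)^{2/p},
\]
with constants depending only on $p$. Applied with $X = L^p$ and $X = \uL^p = L^p(M,\mu;\HH)$, this will let me pass freely between the exponent $2$ used in the definition of $R$-boundedness and the exponent $p$ that matches the underlying $L^p$-norm.

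First I would reduce both sides of the $R$-boundedness inequality to their $L^p$-valued Rademacher counterparts. For vectors $F_1,\dots,F_N \in L^p(M,\mu;\HH)$ (the case of scalar-valued $f_n\in L^p$ being the trivial special case $\HH=\R$), Fubini combined with the scalar Khintchine inequality gives
\[
\E\Big\|\sum_{n=1}^N r_n F_n\Big\|_{\uL^p}^p
= \int_M \E\Big\|\sum_{n=1}^N r_n F_n(\xi)\Big\|_\HH^p\,d\mu(\xi)
\eqsim \int_M \Big(\sum_{n=1}^N \|F_n(\xi)\|_\HH^2\Big)^{p/2} d\mu(\xi).
\]
Combining this with Khintchine--Kahane to switch from exponent $p$ to exponent $2$ on the left side yields
\[
\Big(\E\Big\|\sum_{n=1}^N r_n F_n\Big\|_{\uL^p}^2\Big)^{1/2}
\eqsim \Big\|\Big(\sum_{n=1}^N \|F_n\|_\HH^2\Big)^{1/2}\Big\|_p.
\]
The analogous equivalence holds for $L^p$ (with $\HH = \R$ or $\C$) on the right-hand side of the $R$-boundedness inequality.

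Now the proof reduces to a direct substitution. Applying the equivalence to $F_n = T_n f_n \in \uL^p$ on the left and to $f_n \in L^p$ on the right, the defining inequality
\[
\E\Big\|\sum_{n=1}^N r_n T_n f_n\Big\|_{\uL^p}^2 \le C^2\, \E\Big\|\sum_{n=1}^N r_n f_n\Big\|_{L^p}^2
\]
becomes, up to multiplicative constants depending only on $p$, equivalent to the square function estimate
\[
\Big\|\Big(\sum_{n=1}^N \|T_n f_n\|_\HH^2\Big)^{1/2}\Big\|_p \le C'\, \Big\|\Big(\sum_{n=1}^N |f_n|^2\Big)^{1/2}\Big\|_p.
\]
This proves both implications at once. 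Since the argument is purely formal once the Khintchine--Kahane equivalences are in hand, there is no serious obstacle; the only point requiring care is tracking that the constants on both sides depend only on $p$ (and not on $N$ or on the choice of operators/functions), which is exactly what the Khintchine--Kahane inequalities deliver.
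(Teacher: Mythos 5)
Your argument is correct and coincides with the paper's (the paper gives no written proof, stating only that the result ``follows from a standard application of the Kahane--Khintchine inequalities'' with a reference to \cite{KuW}, and your pointwise-Khintchine-plus-Fubini computation is exactly that standard argument). The only cosmetic remark is that the pointwise step uses the Hilbert-space-valued Khintchine inequality for the $\HH$-valued vectors $F_n(\xi)$ rather than the scalar one, but this is itself an immediate consequence of Kahane--Khintchine together with the orthogonality identity $\E\|\sum_n r_n x_n\|_\HH^2=\sum_n\|x_n\|_\HH^2$, so nothing is missing.
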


The next result is another consequence of the
Kahane-Khintchine inequalities; see \cite{BrvN03, NVW}.

\bpr\label{prop:sqf-radonif} Let $(S,\sigma)$ be a $\sigma$-finite
measure space and let $1\le p<\infty$ and $\frac1p+\frac1q=1$. Let
$k: S\to \uL^p$ be a strongly $\sigma$-measurable function such that
for all $g\in \uL^q$ the function $s\mapsto \ip{ k(s),g}$ is square
integrable. The following assertions are equivalent: \ben
\item[\rm(1)] The operator
$I_k \in \calL(L^2(S,\sigma), \uL^p)$ defined by
$$\lb I_k f,g\rb = \int_S  f(s)\ip{ k(s),g }\,d\sigma(s),
\quad f\in
L^2(S,\sigma), \ g\in \uL^q,$$
is radonifying;
\item[\rm(2)] The square function
$\big( \int_S \| k(s) \|^2
     \;d\sigma(s) \big)^{1/2}$
defines an element of $L^p$.
\een
In this situation we have an equivalence of norms
$$ \n I_k\n_{\ga(L^2(S,\sigma),\uL^p)}\eqsim
\Big\n \Big( \int_S \| k(s) \|^2
     \;d\sigma(s) \Big)^{1/2}\Big\n_{p}.
$$
\epr

In what follows we always identify $k$ with the operator $I_k$.

\section{Intermezzo II: $H^\infty$-functional calculi}
\label{sec:preliminaries2}

In this section we recall some basic facts concerning
$H^\infty$-functional calculi. For more information we refer to the
monographs \cite{DHP,Haase}, the lecture notes \cite{ADMlect,KuW},
and the references given therein.

For $\om \in (0,\pi)$ we consider the open sector
$$\S_\om^+ := \{ z
\in \C: \ z \neq 0,\ |\arg z| < \om \}.$$ A closed operator $A$
acting on a Banach space $X$ is said to be {\em sectorial}  of angle
$\om\in (0,{\pi})$ if $\sigma(A) \subseteq \overline{\S_\om^+}$ and
the set $\{ \l (\l-A)^{-1} : \l \notin \overline{\S_\th^+}\}$ is bounded
for all $\th \in (\om,{\pi})$. The least angle of sectoriality is
denoted by $\om^+(A)$. If $A$ is sectorial and the set $\{ \l
(\l-A)^{-1} : \l \notin \overline{\S_\th^+}\}$ is $R$-bounded for all
$\th \in (\om,{\pi})$, then $A$ is said to be $R$-{\em sectorial} of
angle $\om\in (0,{\pi})$. The least angle of $R$-sectoriality is
denoted by $\om_R^+(A).$

We will frequently use the fact \cite[Proposition 2.1.1(h)]{Haase}
that a sectorial operator $A$ on a reflexive Banach space $X$ induces a
direct sum decomposition
 \begin{align} \label{eq:reflexive}
X = \Null(A) \oplus \ov{\Ran(A)}.
 \end{align}

Let $H^\infty(\S_\th^+)$ be the space of all bounded holomorphic
functions on $\S_\th^+$, and let $H_0^\infty(\S_\th^+)$ denote the
linear subspace of all $\psi \in H^\infty(\S_\th^+)$ which satisfy an
estimate
$$|\psi(z)| \leq C
\Big(\frac{|z|}{1+|z|^2}\Big)^\a, \quad z \in \S_\th^+,
$$
for some $\a>0$ and $C\geq0$. If $A$ is a sectorial operator
and $\psi$ is a function in $H_0^\infty(\S_\th^+)$ with $0 <
\om^+(A) < \th'< \th<\pi$, we may define the bounded operator
$\psi(A)$ on $X$ by the Dunford integral
 $$\psi(A)x := \frac{1}{2\pi i} \int_{\partial \S_{\th'}^+} \psi(z) (z-A)^{-1}x \, dz, \quad x \in X, $$
where $\partial_{\S_{\th'}^+}$ is parametrised
counter-clockwise. By Cauchy's theorem this definition does
not depend on the choice of $\th'.$

A sectorial operator $A$ on $X$ is said to admit a {\em bounded
$H^\infty(\S_{\th}^+)$-functional calculus}, or a {\em bounded
$H^\infty$-functional calculus of angle $\theta$},  if there exists a
constant $C_\theta\ge 0$ such that for all $\psi \in
H_0^\infty(\S_\th^+)$ and all $x\ \in X$ we have
 \begin{align*}
  \| \psi(A) x\| \leq C_\theta \|\psi\|_{\infty} \|x\|,
 \end{align*}
where $\|\psi\|_{\infty}=\sup_{z\in \S_\th^+}|\psi(z)|$. The infimum
over all possible angles $\th$ is denoted $\om_{H^\infty}^+(A).$ We
say that a sectorial operator $A$ admits a {\em bounded
$H^\infty$-functional calculus} if it admits a bounded
$H^\infty(\S_{\th}^+)$-functional calculus for some $0<\theta<\pi$.

The following result is well known; see, e.g., \cite[Theorem
2.20]{KuW}.

\blem\label{lem:Rbdd} Let $A$ be $R$-sectorial of angle
$\omega_R^+(A)<\frac12 \pi$ on $X$, and let $S$ be the bounded
analytic $C_0$-semigroup generated by $-A$. The family $\{S(t): \
t\ge 0\}$ is $R$-bounded in $\calL(X)$. \elem

In the remainder of this section we work in an $L^p$-setting and
use the notations of the previous section. As before we write $L^p =
L^p(M,\mu)$ and $\uL^p = L^p(M,\mu;\H)$, where $(M,\mu)$ is a
$\sigma$-finite measure space and $\H$ is a Hilbert space.

The following result is taken from \cite{CDMY,LeMerdy2} where the
result is proved for scalar-valued $L^p$-spaces.
An extension to more a general class of Banach spaces can be
found in \cite{KaKuWe}.

\bpr\label{prop:Hinfty-sfe}
 Let $1<p<\infty$ and let $A$ be an $R$-sectorial operator on $\uL^p.$
Let $\om_R^+(A) < \th <\pi.$ For all non-zero $\varphi,\psi \in
H_0^\infty(\S_\th^+)$ we have
 $$\Big\| \Big( \int_0^\infty \| \varphi(tA) F\|^2 \,\frac{dt}{t}
   \Big)^{1/2} \Big\|_p \eqsim
  \Big\| \Big( \int_0^\infty \| \psi(tA) F\|^2 \,\frac{dt}{t}
   \Big)^{1/2} \Big\|_p,$$
with implied constants independent of $F.$
Moreover, the following assertions are equivalent:
\begin{enumerate}
\item[\rm(1)] $A$ admits a bounded $H^\infty$-calculus;
\item[\rm(2)] For
some (equivalently, for all) non-zero $\psi \in H_0^\infty(\S_{\theta}^+)$ we have $$ \|F- P_{\Null(A)} F\|_p \lesssim  \Big\|
\Big( \int_0^\infty \| \psi(tA) F\|^2 \,\frac{dt}{t}
   \Big)^{1/2} \Big\|_p\lesssim \|F\|_p, \quad F \in \uL^p.
$$
\end{enumerate}
In {\rm (2)},
 $P_{\Null(A)}$ is the projection onto $\Null(A)$ with kernel
$\ov{\Ran(A)}$  along the decomposition \eqref{eq:reflexive}. If
these equivalent conditions are fulfilled, then $\om_R^+(A) =
\om_{H^\infty}^+(A)$. \epr

In the next result we let $1<p<\infty$ and consider two
$R$-sectorial operators $L$ and $\uA$. We assume that $-L$ and $-\uA$ generate
$R$-bounded analytic $C_0$-semigroups $P$ and $\uS$
 on $L^p$ and $\H$.
We denote by $-\uL$ the generator of the tensor product
$C_0$-semigroup
 $\uP = P \ot \uS$ on $\uL^p.$ This operator is $R$-sectorial of angle $\max\{\om_R^+(L),\om^+(\uA)\} <
\frac12\pi$ on $\uL^p.$

We consider the following three square function norms:
 \begin{align*}
 \|u\|_{\uA} & :=
    \Big( \int_0^\infty \| t \uA\, \uS(t) u\|^2 \,\frac{dt}{t}
  \Big)^{1/2}, \quad u\in \H; \\
 \|f\|_{p,L} & :=
   \Big\|  \Big( \int_0^\infty | t L P(t) f|^2 \,\frac{dt}{t}
  \Big)^{1/2} \Big\|_p,\quad f\in L^p;
  \\
 \|F\|_{p,\uL} &:=
   \Big\|  \Big( \int_0^\infty \n t \uL\, \uP(t) F\n^2 \,\frac{dt}{t}
  \Big)^{1/2} \Big\|_p, \quad F\in \uL^p.
 \end{align*}

\bpr \label{prop:tensorcalculus} Under the above assumptions we have:
\ben
\item[\rm(1)] If  $\| u\|_{\uA} \lesssim \|u\|$ for all $u\in \H$
and $\| f \|_{p,L} \lesssim \|f\|_p$ for all $f\in L^p$, then $\| F
\|_{p,\uL} \lesssim \|F\|_p$ for all $ F\in \uL^p.$
\item[\rm(2)] If  $\| u \|_{\uA} \gtrsim  \|(I-P_{\Null(\uA)})u\|$ for all $u\in \H$
and $\| f \|_{p,L} \gtrsim \|(I-P_{\Null(L)})f\|_p$ for all $f\in
L^p,$  then
$\| F \|_{p,\uL} \gtrsim \|(I-P_{\Null(\uL)})F\|_p$ for all
$F\in \uL^p.$
\een
As a consequence, if  $\uA$ and $L$ have
bounded $H^\infty$-functional calculi of angles less than
$\frac12 \pi$, then ${\uL}$ has a bounded $H^\infty$-functional
calculus of angle less than $\frac12 \pi$. \epr

 \bpf
Let us first show that $(1)$ implies $(2).$ It is
well known that the assumptions of $(2)$ imply the dual estimates
$  \|u\|_{\uA^*} \lesssim \|u\|$ and $  \|f\|_{q,L^*} \lesssim \|f\|_{q}$,
where $\frac1p+\frac1q=1.$ By $(1)$ we find that $  \|F\|_{q,\uL^*}
\lesssim \|F\|_{q}$ and by duality we obtain the conclusion of $(2).$

The final assertion follows by combining $(1)$ and $(2)$ with
Proposition \ref{prop:Hinfty-sfe}.

It remains to prove $(1).$ We proceed in three steps.

{\em Step 1:} \ We prove that $$
  \|  t (I\ot \uA) (I\ot \uS(t))
    F\|_{\ga(L^2(\R_+,\frac{dt}{t}),\uL^p)}
  \lesssim \|F\|_p.$$

For $F \in \uL^p$ we have, for $\mu$-almost all $x\in M$,
$$\Big( \int_0^\infty \| t(I\ot \uA) (I\ot \uS(t)) F(x) \|^2
\frac{dt}{t} \Big)^{1/2}
  \lesssim \|F(x)\|.$$
Integrating this estimate over $M$ yields
 $$\Big\| \Big( \int_0^\infty \| t(I\ot \uA) (I\ot \uS(t)) F \|^2
 \frac{dt}{t} \Big)^{1/2}  \Big\|_p
    \leq \| F  \|_p. $$

{\em Step 2:}\ We prove that
 $$\| t (L\ot I)(P(t)\ot I)  F \|_{\ga(L^2(\R_+,\frac{dt}{t}),\uL^p)}
 \lesssim \|F\|_p.$$

 Let $(h_j)_{j=1}^k$ be a finite orthonormal
system in $\H$ and pick $F := \sum_{j=1}^k f_j \ot h_j \in \uL^p.$
For $f\in L^p$ let $(U f)(t) := t LP(t)f$, and notice that $U$ is a
bounded operator from $L^p$ into $\ga(L^2(\R_+,\frac{dt}{t}),L^p)$ by
the assumption in (1) and Proposition \ref{prop:sqf-radonif}.

Let $(r_j')_{j\ge 1}$ and $(\ga_j')_{j\ge 1}$ be a Rademacher and a
Gaussian sequence respectively on a probability space $(\O',\P')$.
Noting the pointwise equality
$$\| t(L\ot I) (P(t)\ot I) F \|^2 = \sum_{j=1}^k | U f_j(t) |^2$$
we have
 \begin{align*}
\Big\| \Big( \int_0^\infty \| t(L\ot I) (P(t)\ot I) F \|^2
     \frac{dt}{t} \Big)^{1/2}  \Big\|_p
  & = \Big\| \Big( \int_0^\infty \sum_{j=1}^k | U f_j(t) |^2
     \frac{dt}{t} \Big)^{1/2}  \Big\|_p
  \\ & = \Big\n \Big(\int_0^\infty \E' \Big|\sum_{j=1}^k r_j'
U f_j (t)\Big|^2\frac{dt}{t}\Big)^{1/2}\Big\n_p
  \\ & \eqsim \Big\n \sum_{j=1}^k r_j' U
  f_j\Big\n_{\ga(L^2(\R_+ \times \Om',
\frac{dt}{t}\otimes \P'), L^p)}
  \\ & \stackrel{(*)}{\eqsim} \Big\n U \sum_{j=1}^k r_j' f_j
      \Big\n_{\ga(L^2(\Om',\P'), \ga(L^2(\R_+,\frac{dt}{t}), L^p))}
 \\ & \stackrel{(**)}{\lesssim} \Big\n
   \sum_{j=1}^k r_j' f_j\Big\n_{\ga(L^2(\Om',\P'), L^p)}
  \\ & \stackrel{(***)}{=}
\Big(\E' \Big\n \sum_{j=1}^k \ga_j' f_j \Big\n_{L^p}^2 \Big)^{1/2}
 \\ & \eqsim \Big(\E' \Big\n \sum_{j=1}^k \ga_j' f_j \Big\n_{L^p}^p \Big)^{1/p}
 \\ & = \Big\n\Big(  \E' \Big|\sum_{j=1}^k \ga_j' f_j \Big|^p \Big)^{1/p} \Big\n _{L^p}
 \\ & \eqsim \Big\n   \Big(\sum_{j=1}^k |  f_j |^2\Big) ^{1/2} \Big\n
_{L^p}
 \\ & =  \n F \n_p.
 \end{align*}
In $(*)$ we used Proposition \ref{prop:alpha}, in $(**)$ we used the
boundedness of $U$ from $L^p$ into $\ga(L^2(\R_+,\frac{dt}{t}),L^p)$,
and in $(***)$ the definition of the radonifying norm.

\medskip
{\em Step 3:} \ We combine the previous estimates. By Lemma
\ref{lem:Rbdd} the family $\{P(t): \ t\ge 0\}$ is $R$-bounded on
$L^p$. Hence by Proposition \ref{prop:Rtensor} the family $\{ P(t)
\ot I: \ t \geq 0\}$ is $R$-bounded on $\uL^p$. Also, by a simple
application of Fubini's theorem, $\{ I \ot \uS(t) :\ t \geq 0\}$ is
$R$-bounded. Combining these facts with Proposition \ref{prop:KW},
for $F\in \uL^p$ we obtain
 \begin{align*}
\Big\| \Big( \int_0^\infty \| t \uL \,\uP F \|^2
     \frac{dt}{t} \Big)^{1/2}  \Big\|_p
  & \eqsim \|  t \uL\, \uP F  \|_{\ga(L^2(\R_+,\frac{dt}{t}),\uL^p)}
 \\ & \lesssim \| (I\ot S(t)) t (L\ot I)(P(t)\ot I)  F  \|_{\ga(L^2(\R_+,\frac{dt}{t}),\uL^p)}
 \\ & \qquad       +  \| (P(t)\ot I) t (I\ot \uA) (I\ot \uS(t))F  \|_{\ga(L^2(\R_+,\frac{dt}{t}),\uL^p)}
\\ & \lesssim \| t (L\ot I)(P(t)\ot I)  F  \|_{\ga(L^2(\R_+,\frac{dt}{t}),\uL^p)}
 \\ & \qquad         +  \|  t (I\ot \uA) (I\ot \uS(t))F  \|_{\ga(L^2(\R_+,\frac{dt}{t}),\uL^p)}
 \\ & \lesssim \|F\|_p.
 \end{align*}
\epf

\begin{remark}
The final assertion in Proposition \ref{prop:tensorcalculus} is due
to Lancien, Lancien, and Le Merdy \cite[Theorem 1.4]{LLM} who proved
it using operator-valued $H^\infty$-functional calculi.
\end{remark}

The next proposition has been proved in \cite[Theorem 3.5,
Remark 3.6]{LeMerdy2} (for $\H=\C$) and can be extended to a
more general class of Banach spaces including the spaces
$\uL^p$ \cite{HaKu06,vNWeis} (where a generalisation of the
crucial ingredient \cite[Proposition 3.3]{LeMerdy2} is
obtained).

\begin{proposition}\label{prop:sq-fc1}
Let $L$ be $R$-sectorial on $L^p$ of angle $\omega_R^+(L)<\frac12 \pi$, and let
$P$ be the bounded analytic $C_0$-semigroup
$P$ on $L^p$ generated by $-L$. Let
$U: \D(L) \to \uL^p$ be a linear operator, bounded with respect to
the graph norm of $\D(L)$. Consider the following statements.
 \begin{enumerate}
 \item[\rm(1)]
$\displaystyle \Big\| \Big(\int_0^\infty \|\sqrt{t} UP(t) f \|^2 \,\frac{dt}{t}
\Big)^{1/2} \Big\|_p    \lesssim \|f\|_p, \qquad f\in \D(L);$
 \item[\rm(2)] The family $\{\sqrt{t} UP(t): \ t>0\}$ is
$R$-bounded in $\L(L^p,\uL^p)$.
 \end{enumerate}
Then $(1)$ implies $(2).$ If $L$ satisfies the square function
estimate
$$
   \Big\|  \Big( \int_0^\infty | t L P(t) f|^2 \,\frac{dt}{t}
  \Big)^{1/2} \Big\|_p \lesssim \|f\|_p,$$ then $(2)$ implies $(1)$.
 \end{proposition}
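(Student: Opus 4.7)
This proposition compiles results from Le Merdy \cite{LeMerdy2} (scalar case) with vector-valued extensions in \cite{HaKu06, vNWeis}. The central device is Proposition \ref{prop:sqf-radonif}, which identifies $L^p$-square functions with $\ga$-radonifying norms: condition (1) is equivalent to the boundedness of the operator $T: L^p \to \ga(L^2(\R_+, dt/t), \uL^p)$, $(Tf)(t) = \sqrt{t}\,UP(t)f$.

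For $(1) \Rightarrow (2)$, I would invoke the discrete square function characterization of $R$-boundedness (Proposition \ref{prop:R-bdd-SQF}). Given finite $(t_k, f_k)_{k=1}^N$, approximate each pointwise value $\sqrt{t_k}\,UP(t_k)f_k$ by a normalized $L^2(\R_+, dt/t)$-average $\epsilon^{-1/2}\int_{I_k}\sqrt{s}\,UP(s)f_k\,ds/s$ over a small $dt/t$-interval $I_k \ni t_k$ of measure $\epsilon$. Operator-norm continuity of $s \mapsto \sqrt{s}\,UP(s)$ on $(0,\infty)$, which follows from analyticity of $P$, ensures convergence as $\epsilon \to 0$. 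Applying $T$ to the test function $F = \sum_k \chi_{I_k}\epsilon^{-1/2}f_k$ and using the Kahane-Khintchine identification $\ga(L^2(\R_+, dt/t), \uL^p) \simeq L^p(L^2(\R_+, dt/t; \H))$ bounds the discrete square function of $(\sqrt{t_k}\,UP(t_k)f_k)_k$ by a constant times $\|(\sum_k |f_k|^2)^{1/2}\|_p$, yielding the required $R$-bound.

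For $(2) \Rightarrow (1)$ under the $L$-square function hypothesis, first reduce to $f \in \ov{\Ran_p(L)}$ via the decomposition \eqref{eq:reflexive} (on $\Null_p(L)$ both sides of (1) are zero, since $UP(t)f = Uf$ forces $Uf = 0$ for the square function to be finite). For such $f$, integrating $-\tfrac{d}{ds}P(s)f = LP(s)f$ from $t$ to $\infty$ yields $P(t)f = \int_t^\infty LP(s)f\,ds$. Using the semigroup factorization $LP(s) = LP(s/2)P(s/2)$ one obtains a representation
$$\sqrt{t}\,UP(t)f = \int_0^\infty k(t,s)\bigl[\sqrt{s/2}\,UP(s/2)\bigr]\bigl[(s/2)LP(s/2)f\bigr]\,\frac{ds}{s}$$
with kernel $k(t,s) \simeq \sqrt{t/s}\,\chi_{\{s > t\}}$ satisfying Schur's test on $L^2(\R_+, dt/t)$. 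Hypothesis (2) provides $R$-boundedness of the multiplier family $\{\sqrt{s/2}\,UP(s/2)\}$, so the Kalton-Weis theorem (Proposition \ref{prop:KW}) transfers the $L$-square function estimate to the $U$-side, after which the Schur kernel estimate (via the $\ga$-space right-ideal property) converts the integration variable from $ds/s$ to $dt/t$. The main obstacle is the careful $\ga$-space bookkeeping in the vector-valued setting, in particular applying the Schur estimate to $\uL^p$-valued $\ga$-functions; this is the content of the technical extensions in \cite{HaKu06, vNWeis}.
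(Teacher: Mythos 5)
The paper offers no proof of this proposition to compare against: it is imported from \cite[Theorem 3.5, Remark 3.6]{LeMerdy2} together with its $\uL^p$-valued extension in \cite{HaKu06,vNWeis}. Judged on its own merits, the second half of your sketch, (2) together with the square function estimate for $L$ implies (1), is essentially the standard $\ga$-space argument and is sound: after reducing to $f\in\ov{\Ran_p(L)}$ (note that $Uf=0$ on $\Null_p(L)$ should be deduced from the uniform boundedness contained in (2) and $P(t)f=f$, not from the finiteness of the square function you are in the process of proving), the representation $\sqrt{t}\,UP(t)f=\int_0^\infty k(t,s)\,[\sqrt{s/2}\,UP(s/2)]\,[(s/2)LP(s/2)f]\,\frac{ds}{s}$ with $k(t,s)\eqsim\sqrt{t/s}\,\chi_{\{s>t\}}$, Proposition \ref{prop:KW}, the right-ideal property of $\ga$-spaces and Schur's test yield (1) via Proposition \ref{prop:sqf-radonif}, modulo routine care with the improper integral and a density argument.

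The implication (1)$\Rightarrow$(2), however, has a genuine gap. Your argument uses only the operator-norm continuity of $s\mapsto\sqrt{s}\,UP(s)$ and the bound (1), and no such argument can work: for a general continuous family the statement is false; take $\Phi(t)=\phi(t)S$ with a fixed nonzero $S\in\L(L^p,\uL^p)$ and $\phi$ continuous and unbounded but in $L^2(\R_+,dt/t)$, so that the square function estimate holds while $\{\Phi(t):t>0\}$ is not even uniformly bounded. Concretely, the step ``applying $T$ to the test function $F=\sum_k\chi_{I_k}\epsilon^{-1/2}f_k$'' does not parse, since $T$ is defined on $L^p$; what your computation actually requires is boundedness, uniformly in $\epsilon$, of the pointwise multiplier $F\mapsto \Phi(\cdot)F(\cdot)$ on $\ga(L^2(\R_+,\frac{dt}{t}),L^p)$, and by the Kalton--Weis theorem this is essentially equivalent to the $R$-boundedness of $\{\Phi(t)\}$, i.e.\ to the conclusion; the step is circular. (Applying $T$ to each $f_k$ separately and localising to $I_k$ only gives a bound of order $\epsilon^{-1/2}\|f_k\|_p$, because the continuous square function over a set of $\frac{dt}{t}$-measure $\epsilon$ controls only $\sqrt{\epsilon}$ times a pointwise value, so nothing survives the limit $\epsilon\to 0$.) Note also that your argument never invokes the $R$-sectoriality of $L$, which is indispensable for this direction; the real content of (1)$\Rightarrow$(2) is precisely the analogue of the ``crucial ingredient'' \cite[Proposition 3.3]{LeMerdy2}, whose $\uL^p$-valued generalisation the paper attributes to \cite{HaKu06,vNWeis}, and its proof exploits the semigroup identity $P(t)=P(s)P(t-s)$ and the $R$-boundedness of the resolvents and of $\{P(t)\}$ in an essential way (compare Remark \ref{rem:sq-fc1}); this part of the proof cannot be replaced by the small-interval averaging you propose.
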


 \begin{remark}\label{rem:sq-fc1}
In \cite{LeMerdy2} and other works in the mathematical systems
theory literature, condition (2) is replaced by the following equivalent
condition:
 \beni
  \item[\rm(2$'$)] The family $\{t U(I+t^2L)^{-1}: \ t>0\}$ is
$R$-bounded.
 \een
That (2) implies (2$'$) follows by taking Laplace transforms
and the opposite direction is observed in
\cite[(3.12)]{LeMerdy2}. Since our computations involve
semigroups rather than resolvents we find it more natural to
use (2).
 \end{remark}

Below we shall also need the notion of an ($R$-){\em bisectorial} operator,
which is analogous to
that of an ($R$-)sectorial operator, the only difference being that
the sector $\Sigma_\theta^+$ is replaced by the bisector
$\Sigma_\theta = \Sigma^+_\theta \cup\Sigma_\theta^-$, where
$\Sigma_\theta^- = - \Sigma_\theta^+$. Many results in the literature on
($R$-)sectorial operators carry over to ($R$-)bisectorial operators,
with only minor changes in the proofs. We refer to the lecture notes
\cite{ADM} for more details.

\section{Proof of Theorems \ref{thm:gradient_est} and \ref{thm:LPS}}\label{sec:gradient}

We return to the main setting of the paper and take up our study of the
operators $\LB$ and $\uLB$ introduced in Sections
\ref{sec:DBD} and \ref{sec:DDB}. Notations are again as in these sections.

For functions $f\in \F C_{\rm b}^\infty(E;\Dom(\AB))$ we consider the
Littlewood-Paley-Stein square functions
  \begin{align*}
  \HH f(x) &:= \Big( \int_0^\infty \| \sqrt{t}\uD \PB(t) f(x) \|^2
     \;\frac{dt}{t} \Big)^{{1/2}}, \quad x\in E,
\\  \mathcal{G} f(x) &:= \Big( \int_0^\infty
            \| t\uD \QB(t) f(x) \|^2   \,\frac{dt}{t} \Big)^{1/2}, \quad x\in E.
  \end{align*}
where $\QB$ denotes the analytic $C_0$-semigroup generated by $-\sqrt{\LB}$.

The functions $t\mapsto \uD \PB(t)f$ are analytic in a sector
containing $\R_+$, and therefore a well-known result of Stein
\cite{Ste70} allows us to select a pointwise version $(t,x)
\mapsto \uD \PB(t)f(x)$ which is analytic in $t$ for every
fixed $x$. Using such a version, we see that $\HH f$ is well
defined almost everywhere (but possibly infinite). The square
function $\mathcal{G} f$ is well defined by similar reasoning.

In Section \ref{sec:Kato} we shall need the following inequality. The argument is taken from \cite{CDL}.

 \blem \label{lem:HdominatesG}
For all $f\in \F C_{\rm b}^\infty(E;\Dom(\AB))$ we have $\mathcal{G}f \leq  \mathcal{H}f$
$\mu$-almost everywhere.
 \elem

 \begin{proof}
Using the representation
$$\QB(t) f
 = \frac{1}{\sqrt{\pi}} \int_0^\infty
 \frac{e^{-u}}{\sqrt{u}} \PB\big(\frac{t^2}{4u}\big)f \,du$$
and the closedness of $\uD$,
 \begin{align*}
 \mathcal{G}^2 f(x)
  & = \int_0^\infty
            \|t \uD \QB(t) f(x) \|^2   \,\frac{dt}{t}
 \\ & \leq \frac{1}{\pi} \int_0^\infty
     \Big( \int_0^\infty \Big\|t \uD \PB\big(\frac{t^2}{4u}\big)f(x)\Big\|
      \frac{e^{-u}}{\sqrt{u}} \,du \Big)^2 \frac{dt}{t}.
 \end{align*}
Since $\int_0^\infty \frac{e^{-u}}{\sqrt{u}}\,du =\sqrt{\pi}$ we may apply Jensen's inequality to obtain
 \begin{align*}
 \mathcal{G}^2 f(x)
 & \leq \frac{1}{\sqrt{\pi}} \int_0^\infty
     \Big( \int_0^\infty  \Big\|t \uD \PB\big(\frac{t^2}{4u}\big)f(x)\Big\|^2 \frac{e^{-u}}{\sqrt{u}} \,du \Big)
     \,\frac{dt}{t}
 \\& = \frac{1}{\sqrt{\pi}} \int_0^\infty
     \Big( \int_0^\infty \Big\|t \uD \PB\big(\frac{t^2}{4u}\big)f(x)\Big\|^2
     \,\frac{dt}{t}\Big)\frac{e^{-u}}{\sqrt{u}}  \,du
 \\& = \frac{2}{\sqrt{\pi}} \int_0^\infty
     \Big( \int_0^\infty  \big\|\sqrt{s}\uD \PB(s) f(x)\big\|^2
     \,\frac{ds}{s}\Big) \sqrt{u}e^{-u} \,du
 \\& =  \mathcal{H}^2 f (x).
 \end{align*}

 \end{proof}

The main results of this section are the following two theorems, which together imply part (2) of Theorem \ref{thm:gradient_est} as well as
Theorem \ref{thm:LPS}. Part (1) of Theorem \ref{thm:gradient_est} is contained
in Theorem \ref{thm:pointwisegrad}.

\begin{theorem}[$R$-Gradient bounds]\label{thm:Rgradientestimates}
Assume {\rm(A1)}, {\rm(A2)}, {\rm (A3)}, and let $1<p<\infty$.
Then $\D(\LB)$ is a core for $\D(\uD)$ and the families
$$\big\{\sqrt{t}\uD \PB(t): \ t>0\big\} \ \hbox{ and } \ \big\{ t \uD (I + t^2 \LB)^{-1} : \ t>0\big\}
$$
are $R$-bounded in $\calL(L^p,\uL^p).$
\end{theorem}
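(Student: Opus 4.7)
\emph{Proof sketch.} The plan is to deduce all three assertions from the upper Littlewood-Paley-Stein estimate of Theorem~\ref{thm:LPS} and the abstract machinery of Section~\ref{sec:preliminaries2}; the key analytic input is the pointwise gradient bound of Theorem~\ref{thm:gradient_est}\,(1).

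For the first $R$-bounded family I would invoke Proposition~\ref{prop:sq-fc1} with $U=\uD$ viewed as a linear operator on $\D(\LB)$. Its hypothesis~(1),
\[
\Big\n\Big(\int_0^\infty \n\sqrt{t}\,\uD \PB(t) f\n^2\,\frac{dt}{t}\Big)^{1/2}\Big\n_p \lesssim \n f\n_p,
\]
is precisely the right-hand inequality of Theorem~\ref{thm:LPS}, and the implication $(1)\Rightarrow(2)$ of Proposition~\ref{prop:sq-fc1} requires no further assumption on $\LB$. This immediately yields the $R$-boundedness of $\{\sqrt{t}\,\uD \PB(t):t>0\}$ in $\calL(L^p,\uL^p)$.

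For the resolvent family, the integral formula $(I+t^2\LB)^{-1} = \int_0^\infty e^{-s}\PB(st^2)\,ds$ followed by the change of variables $u=st^2$ gives
\[
t\,\uD(I+t^2\LB)^{-1} = \int_0^\infty \phi_t(u)\bigl[\sqrt{u}\,\uD \PB(u)\bigr]\,du, \quad \phi_t(u) := \frac{e^{-u/t^2}}{t\sqrt{u}},
\]
with $\n \phi_t\n_{L^1(\R_+)}=\sqrt{\pi}$ uniformly in $t>0$. Proposition~\ref{prop:Rboundedintegral}, applied to the $R$-bounded family from the previous step, then gives the $R$-boundedness of $\{t\,\uD(I+t^2\LB)^{-1}:t>0\}$; this is just the Laplace-transform equivalence noted in Remark~\ref{rem:sq-fc1}.

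For the core statement, Lemmas~\ref{lem:Fcore} and~\ref{lem:polyn-core} already identify $\F C_{\rm b}^\infty(E;\D(\AB))$ as a subspace of $\D(\LB)$ that is a core for $\D(\uD)$, so it remains only to check $\D(\LB)\subseteq\D(\uD)$. For $f\in\D(\LB)$ the uniform bound $\n\uD \PB(t)\n_{L^p\to\uL^p}\lesssim t^{-1/2}$ (a consequence of the $R$-boundedness above) combined with $\PB(t)f-\PB(s)f = -\int_s^t \PB(r)\LB f\,dr$ gives
\[
\n\uD \PB(t)f - \uD \PB(s)f\n_p \lesssim (\sqrt{t}-\sqrt{s})\,\n\LB f\n_p,
\]
so $\uD \PB(t)f$ is Cauchy in $\uL^p$ as $t\da 0$, and closedness of $\uD$ forces $f\in\D(\uD)$ with $\uD f=\lim_{t\da0}\uD \PB(t)f$. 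The real obstacle is hidden in the upper LPS estimate for $p>2$ in the non-symmetric setting: standard duality from the $1<p\le 2$ range is unavailable there, and the genuinely new techniques announced for Section~\ref{subsec:p2} are required to handle it.
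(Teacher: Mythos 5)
There is a genuine gap, and it sits exactly where you point at the end: for $2<p<\infty$ the upper Littlewood--Paley--Stein estimate of Theorem~\ref{thm:LPS} is not an available input. In the paper the logical order for $p>2$ is the reverse of yours: the $R$-boundedness of $\{\sqrt{t}\,\uD\PB(t)\}$ is proved \emph{first}, by combining the pointwise gradient bound $\sqrt{t}\,\|\uD\PB(t)f(x)\|\lesssim(\PB(t)|f|^2(x))^{1/2}$ (Theorem~\ref{thm:pointwisegrad}, proved via the Mehler-type representation over the spaces $H_t$ and Lemma~\ref{lem:SBVinHt}) with the dual maximal inequality of Proposition~\ref{prop:tent} applied to $\PB^*$ on $L^q$, and then reading off $R$-boundedness from the square-function characterisation of Proposition~\ref{prop:R-bdd-SQF}; only \emph{afterwards} is the right-hand inequality of Theorem~\ref{thm:LPS} obtained, via Proposition~\ref{prop:sq-fc1}\,$(2)\Rightarrow(1)$ and the $H^\infty$-calculus of $\LB$ (Lemma~\ref{lem:LBcalc}). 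So invoking Theorem~\ref{thm:LPS} and Proposition~\ref{prop:sq-fc1}\,$(1)\Rightarrow(2)$ for $p>2$ is circular unless you supply an independent proof of the LPS upper bound there, which is precisely the content you have deferred; flagging the obstacle does not discharge it. For $1<p\le 2$ your route does coincide with the paper's, since in that range the LPS estimate is proved directly by the Stein-type argument.

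A secondary point: even in the range where the LPS estimate is available, Proposition~\ref{prop:sq-fc1} requires $U=\uD$ to be defined on $\D(\LB)$ and bounded for the graph norm, i.e.\ the inclusion $\D(\LB)\subseteq\D(\uD)$ must come \emph{before} that proposition is applied, whereas you derive it afterwards from the uniform bound $\|\uD\PB(t)\|\lesssim t^{-1/2}$. The paper resolves this by first writing $\uD(I+\LB)^{-1}f=\int_0^\infty e^{-s}\uD\PB(s)f\,ds$ on the core $\F C_{\rm b}^\infty(E;\Dom(\AB))$ and estimating it by $\|\HH f\|_p\lesssim\|f\|_p$ via Cauchy--Schwarz, which gives the domain inclusion (and graph-norm bound) from the square function estimate directly; your subsequent Cauchy-in-$t$ argument is then unnecessary. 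Your treatment of the resolvent family via $\phi_t(u)=e^{-u/t^2}/(t\sqrt{u})$ and Proposition~\ref{prop:Rboundedintegral} is fine and is exactly the Laplace-transform step of Remark~\ref{rem:sq-fc1}.
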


 \begin{theorem}[Littlewood-Paley-Stein inequalities]\label{thm:square_functions}
Assume {\rm(A1)}, {\rm(A2)}, {\rm (A3)}, and let $1<p<\infty$.
For all $f\in \F C_{\rm b}^\infty(E;\Dom(\AB))$ we have the square function estimate
$$\|f- P_{\Null_p(L)} f\|_p \lesssim
\|\mathcal{H}f\|_p \lesssim \|f\|_p,
$$
where $P_{\Null_p(L)}$ is the projection onto $\Null_p(L)$ along the direct sum decomposition $L^p = \Null_p(L)\oplus \overline{\Ran_p(L)}$.
 \end{theorem}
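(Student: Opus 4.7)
I would prove the upper bound $\|\mathcal{H}f\|_p \lesssim \|f\|_p$ directly by a Bakry-\'Emery-type energy argument and then derive the lower bound $\|f - P_{\Null_p(\LB)}f\|_p \lesssim \|\mathcal{H}f\|_p$ by duality against the adjoint semigroup $\PB^*$. At $p=2$, differentiating $t \mapsto \|\PB(t)f\|_2^2$ and using $\LB = \uD^*B\uD$ together with the coercivity (A3) yields
\begin{align*}
\tfrac12\bigl(\|f\|_2^2 - \|P_{\Null_2(\LB)}f\|_2^2\bigr) = \int_0^\infty \Re[B\uD \PB(t)f,\uD \PB(t)f]\,dt \ge k\int_0^\infty \|\uD \PB(t)f\|_2^2\,dt,
\end{align*}
which is the $L^2$ upper bound by Fubini. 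For general $1<p<\infty$, I would apply the chain rule of Lemma \ref{lem:Fcore}(2) to $\psi \circ \PB(t)f$ for a $C^2$-regularisation $\psi$ of $u \mapsto |u|^p$; integrating against $\mu$ (which annihilates $\LB$) and removing the regularisation produces the pointwise-in-$t$ energy identity
\begin{align*}
p(p-1)k\int_E |\PB(t)f|^{p-2}\,\|\uD \PB(t)f\|^2\,d\mu \le -\tfrac{d}{dt}\int_E |\PB(t)f|^p\,d\mu.
\end{align*}
Combining this weighted energy bound with the pointwise gradient estimate of Theorem \ref{thm:gradient_est}(1) and a Stein-type maximal function argument -- available since $\PB$ is a positive analytic $C_0$-contraction semigroup on $L^p$ by Theorem \ref{thm:sector} -- then yields $\|\mathcal{H}f\|_p \lesssim \|f\|_p$.

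For the lower bound, since $B^*$ satisfies (A3), the adjoint semigroup $\PB^*$ is generated by $-\LB^* = -\uD^*B^*\uD$ and falls under the same framework; hence the upper bound just established applies to $\PB^*$ and gives $\|\mathcal{H}^*g\|_q \lesssim \|g\|_q$ for $1/p+1/q=1$, where $\mathcal{H}^*$ is defined with $\PB^*$ in place of $\PB$. Differentiating $t \mapsto \int_E \PB(t)f \cdot \PB^*(t)g\,d\mu$, using $\int \LB u\cdot v\,d\mu = \int u\cdot \LB^* v\,d\mu$, and integrating in $t$ produces the polarisation identity
\begin{align*}
\int_E (I-P_{\Null_p(\LB)})f \cdot g\,d\mu = 2\int_0^\infty \int_E [B\uD \PB(t)f,\uD \PB^*(t)g]\,d\mu\,dt.
\end{align*}
Cauchy-Schwarz in $t$ followed by H\"older in $(E,\mu)$ bounds the right-hand side by $\|B\|\,\|\mathcal{H}f\|_p\,\|\mathcal{H}^*g\|_q \lesssim \|\mathcal{H}f\|_p\,\|g\|_q$, and taking the supremum over $g\in L^q$ yields the desired lower bound.

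The main obstacle is the upper bound for $p\neq 2$: the $L^2$ energy identity does not extrapolate directly, and as the paper stresses in Section \ref{sec:elliptic}, the classical symmetric-case reduction of $p>2$ to $p\leq 2$ via duality is unavailable in our nonsymmetric setting. The $\Gamma_2$/Li-Xu-type argument must therefore be executed directly for every $p\in(1,\infty)$, with coercivity on $\ov{\Ran(V)}$ substituting for the role of symmetry in Stein's classical Littlewood-Paley theory, and the pointwise gradient estimate of Theorem \ref{thm:gradient_est}(1) supplying the bridge from the integrated weighted energy bound to the pointwise square function $\mathcal{H}f$.
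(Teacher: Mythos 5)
Your overall architecture matches the paper's: a direct argument for the upper bound and a duality argument against $\PB^*$ (generated by $\LB^*=\uD^*B^*\uD$, with $B^*$ again satisfying (A3)) for the lower bound; indeed your polarisation identity and Cauchy--Schwarz/H\"older step is exactly the paper's completion of the proof, and your weighted energy inequality plus maximal-function argument is, for $1<p\le 2$, essentially the paper's Stein-type proof (modulo the reduction to cylindrical $f\ge\eps$, which your regularisation of $|u|^p$ only partially addresses, and the justification of $\lim_{t\to\infty}\PB(t)f=P_{\Null_p(\LB)}f$ in the duality step).

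The genuine gap is the upper bound for $2<p<\infty$. The weighted energy bound with weight $|\PB(t)f|^{p-2}$ is only exploitable when $2-p\ge 0$: the Stein trick writes $\|\uD u\|^2=|u|^{2-p}\cdot|u|^{p-2}\|\uD u\|^2$ and dominates $|u|^{2-p}$ by $u_\star^{2-p}$, which for $p>2$ would require a lower bound on $|\PB(t)f|$ that does not exist; so no H\"older interpolation between your energy identity and a maximal function can be run. Nor does the pointwise gradient bound of Theorem \ref{thm:gradient_est}(1) ``supply the bridge'' by itself: applying it through $\uD\PB(t)f=\uD\PB(t/2)\PB(t/2)f$ gives $\|\uD\PB(t)f\|^2\lesssim t^{-1}\PB(t/2)|\PB(t/2)f|^2$, and the factor $t^{-1}$ is not integrable near $t=0$, so the continuous square function $\HH f$ is not controlled this way. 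What the paper actually does for $p>2$ is different in kind: it combines the $r=2$ pointwise gradient bound with the \emph{dual} maximal inequality of Proposition \ref{prop:tent} (using $L^{(p/2)'}$-boundedness of the maximal function of $\PB^*$ from Proposition \ref{prop:Cowling}) to obtain the discrete square-function estimate of Proposition \ref{prop:R-bdd-SQF}, i.e.\ the $R$-boundedness of $\{\sqrt{t}\,\uD\PB(t):t>0\}$ in $\L(L^p,\uL^p)$, and then transfers from this $R$-boundedness to the continuous-time estimate $\|\HH f\|_p\lesssim\|f\|_p$ via the Le Merdy-type result Proposition \ref{prop:sq-fc1}, which in addition requires the square function estimate for $\LB$ itself, i.e.\ the bounded $H^\infty$-calculus of $\LB$ on $L^p$ (Lemma \ref{lem:LBcalc}, resting on the Kalton--Weis theorem for analytic semigroups of positive contractions). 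None of these ingredients (dual maximal inequality, $R$-boundedness, $H^\infty$-calculus of $\LB$, the transference theorem) appears in your outline, and the ingredients you do invoke do not combine to yield the estimate; ``executing the $\Gamma_2$-type argument directly for every $p$'' is precisely what is not available in this nonsymmetric setting, which is why the paper introduces this machinery.
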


By Theorem \ref{thm:square_functions} the square function
$\mathcal{H}f$ is actually well-defined
for arbitrary $f\in L^p$, and by approximation Theorem \ref{thm:square_functions}
extends to all of $L^p$. Since we do not need these
observations we leave the details to the reader.

For the proofs of both theorems we distinguish between the
cases $1<p\leq 2$ and $2<p<\infty$.
For $1<p\leq 2$ we show by a direct argument that $\HH$ is
$L^p$-bounded and deduce from this that $\D(\LB)$ is a core for $\D(\uD)$.
Theorem \ref{thm:Rgradientestimates} is then a
consequence of Proposition \ref{prop:sq-fc1}. For $2<p<\infty$ we
first derive
Theorem \ref{thm:Rgradientestimates} from a pointwise
gradient bound and a duality argument involving maximal functions.
Since $\LB$ has a bounded $H^\infty$-calculus of angle $< \frac12
\pi$ by Lemma \ref{lem:LBcalc}, the right-hand side estimate of Theorem \ref{thm:square_functions}
then follows by an application of Proposition \ref{prop:sq-fc1}.
Finally, the left-hand side inequality  of Theorem \ref{thm:square_functions} is proved, for $1<p<\infty$, by a duality argument. 

We begin with an easy observation.

 \begin{lemma} \label{lem:LBcalc}
Let $1<p<\infty$. The operator $\LB$ is $R$-sectorial and admits a
bounded $H^\infty$-calculus on $L^p$ of angle $ \om_{H^\infty}^+(\LB)
= \om_R^+(\LB) < \frac12 \pi.$ Moreover,
\ben
\item[\rm(1)]
The family $\{\PB(t): \ t\ge
0\}$ is $R$-bounded in $\calL(L^p)$;
\item[\rm(2)]
The family $\{\uPB(t): \ t\ge 0\}$ is $R$-bounded in $\calL(\overline{\Ran_p(\uD)})$. \een
 \end{lemma}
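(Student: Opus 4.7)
The plan is to first establish the bounded $H^\infty$-calculus and $R$-sectoriality of $\LB$ with angle strictly below $\tfrac{\pi}{2}$, and then obtain (1) and (2) as consequences via Lemma~\ref{lem:Rbdd}, Proposition~\ref{prop:Rtensor}, and the tensor product description of $\uPB$ from Definition~\ref{def:uPB}.

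The structural facts needed come from Section~\ref{sec:DBD}: by Theorem~\ref{thm:D(L)dense}, $\PB$ is an analytic $C_0$-semigroup on $L^p$ whose operators are positive, contractive on $L^\infty$, and preserve the measure $\mu$, while Proposition~\ref{prop:sector} gives a quantitative lower bound on the angle of analyticity in terms of the coercivity constant $k$ and $\|B-B^*\|$. A standard argument combining Fendler's dilation theorem for positive $C_0$-contraction semigroups on $L^p$ with the Coifman-Weiss transference principle yields a bounded $H^\infty$-calculus for $\LB$ on $L^p$ for $1<p<\infty$; combining the resulting calculus on a sector with the analyticity angle from Proposition~\ref{prop:sector} gives $\omega_{H^\infty}^+(\LB)<\tfrac{\pi}{2}$. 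Since $L^p$ is UMD, the Kalton-Weis theorem then delivers $\omega_{R}^+(\LB)=\omega_{H^\infty}^+(\LB)<\tfrac{\pi}{2}$.

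Claim (1) is now immediate from Lemma~\ref{lem:Rbdd}. For (2), Definition~\ref{def:uPB} tells us that $\uPB(t)$ is the restriction to $\overline{\Ran_p(\uD)}$ of $\PB(t)\otimes \uSB(t)$. The family $\{\uSB(t):t\geq 0\}$ is uniformly bounded in $\L(\H)$, being a bounded $C_0$-semigroup on a Hilbert space, and hence $R$-bounded. Combining this with (1), Proposition~\ref{prop:Rtensor} yields the $R$-boundedness of $\{\PB(t)\otimes \uSB(t):t\geq 0\}$ in $\L(\uL^p)$, which is inherited by the restriction to the invariant subspace $\overline{\Ran_p(\uD)}$.

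The only genuinely delicate step is the bounded $H^\infty$-calculus: while the existence of some bounded $H^\infty$-calculus is classical for positive contraction semigroups on $L^p$ via dilation/transference, pinning down the angle to lie strictly below $\tfrac{\pi}{2}$ requires combining that general machinery with the explicit analyticity estimate of Proposition~\ref{prop:sector}. Everything else in the statement reduces, by Lemma~\ref{lem:Rbdd} and Proposition~\ref{prop:Rtensor}, to formal tensor product and restriction manipulations.
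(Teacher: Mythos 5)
Your treatment of (1) and (2) coincides with the paper's: (1) is Lemma \ref{lem:Rbdd}, and (2) follows from the identity $\uPB = \PB \otimes \uSB$ of Definition \ref{def:uPB}, the uniform (hence $R$-) boundedness of $\uSB$ on the Hilbert space $\H$, and Proposition \ref{prop:Rtensor}, exactly as in the paper. For the first assertion the paper simply invokes \cite[Corollary 5.2 and Theorem 5.3]{KW}, which say precisely that the negative generator of an analytic semigroup of positive contractions on $L^p$ admits a bounded $H^\infty$-calculus with $\omega_{H^\infty}^+ = \omega_R^+ < \frac12\pi$; you instead try to rederive this from Fendler's dilation theorem and Coifman--Weiss transference.

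In that rederivation there is a genuine gap. Dilation plus transference only produces a bounded $H^\infty(\S_\theta^+)$-calculus for angles $\theta > \frac12\pi$, and you cannot push the angle below $\frac12\pi$ by ``combining'' this calculus with the analyticity (i.e.\ sectoriality) angle coming from Proposition \ref{prop:sector}: on $L^p$ the $H^\infty$-angle is governed by the $R$-sectoriality angle, not by the sectoriality angle (by Kalton--Weis, $\omega_{H^\infty}^+ = \omega_R^+$ once a bounded calculus exists, and examples of Kalton show this common value can strictly exceed the sectoriality angle), so analyticity of the semigroup alone does not lower the calculus angle. What is actually needed -- and what the cited results in \cite{KW} provide -- is that a positive, contractive, analytic semigroup on $L^p$ is $R$-analytic, i.e.\ $\omega_R^+(\LB) < \frac12\pi$ (Weis's maximal regularity theorem); only then does the angle identity yield $\omega_{H^\infty}^+(\LB) < \frac12\pi$. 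Your write-up also runs the implications in the wrong order, asserting $\omega_{H^\infty}^+ < \frac12\pi$ before appealing to the angle identity. The fix is either to quote \cite[Corollary 5.2, Theorem 5.3]{KW} directly, as the paper does, or to insert the $R$-sectoriality step (which uses positivity and contractivity, not merely the numerical range estimate of Proposition \ref{prop:sector}) before invoking the Kalton--Weis angle theorem; note also that Proposition \ref{prop:sector} quantifies the angle only on $L^2$, the $L^p$-analyticity being obtained by Stein interpolation in Theorem \ref{thm:sector}.
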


 \begin{proof}
Since $-\LB$ generates an analytic $C_0$-semigroup of positive
contractions on $L^p$ for all $1<p<\infty,$ the first part
follows from \cite[Corollary 5.2 and Theorem 5.3]{KW}.
Assertion (1) follows from Lemma \ref{lem:Rbdd}, and assertion
(2) follows by combining (1) with the identity $\uPB = \PB
\otimes \uSB$ and Proposition \ref{prop:Rtensor}. \epf

We continue with a simple extension of a well-known result of Cowling
\cite[Theorem 7]{Cow83} (see also \cite{Tag}). For the convenience of
the reader we give a sketch of the proof.

 \bpr \label{prop:Cowling}
Let $(M,\mu)$ be a $\sigma$-finite measure space and let $T$
be an analytic $C_0$-semigroup of positive operators on $L^2
:= L^2(M,\mu)$ satisfying $\|T(t)f\|_{p} \leq \|f\|_p$ for all
$f \in L^2 \cap L^p,$ $t \geq 0$ and $1\leq p \leq \infty.$
Let
$$T_\star f(x) := \sup_{t>0} |T(t)f(x)|.$$ Then for
$1<p<\infty$ we have
$$\|T_\star f\|_p \lesssim \|f\|_p, \quad f \in L^p.$$
 \epr

 \bpf
Let $-L$ denote the generator of $T$ in $L^p.$ By \cite[Corollary
5.2]{KW}, $L$ has a bounded $H^\infty$-calculus of angle $\om <
\frac12 \pi$. The key idea of the proof is to write
$$T(t)f = \frac1t \int_0^t T(s)f \,ds + \Big( T(t)f -
\frac{1}{t} \int_0^t T(s)f \,ds \Big) = \frac1t \int_0^t T(s)f
\,ds + m(tL)f,$$ where $m(z) := e^{-z} - \int_0^1 e^{-sz}
\,ds$.

By the
Hopf-Dunford-Schwartz ergodic theorem \cite[Theorem
6.12]{Krengel} we have
$$\Big\| \sup_{t>0} \Big|\frac1t \int_0^t T(s)f \,ds\Big|\, \Big\|_p \lesssim \|f\|_p,$$
so that it remains to prove that $\big\| \sup_{t
>0} |m(tL) f|\,\big\|_p \lesssim \|f\|_p.$

Let $n := m \circ \exp$ and let $\hat n$ be its Fourier
transform. Using the identities $$m(z) = \frac{1}{2\pi}
\int_\R \hat n(u) z^{iu}\,du,\qquad \hat n (u) = (1 -
(1+iu)^{-1})\Gamma(iu),$$ and the estimate $|\hat n (u)| \leq
C e^{-\frac12 \pi|u|}$ (see \cite{Cow83}) we obtain
 \begin{align*}
 \sup_{t >0} |m(tL)F|
   \leq  \sup_{t >0} \frac{1}{2\pi} \int_\R | \hat
 n(u)| \,|(tL)^{iu}F| \,du
 \lesssim
    \frac{1}{2\pi} \int_\R e^{-\frac12 \pi|u|} |L^{iu}F|
   \,du.
 \end{align*}
From the $H^\infty$-calculus of $L$ we obtain $\|L^{iu} f\|_p \lesssim
e^{\om|u|}\|f\|_p.$ Taking $L^p$-norms we obtain
 \begin{align*}
 \big\|\sup_{t >0} |m(tL)F| \,\big\|_p
 & \lesssim
  \frac{1}{2\pi} \int_\R e^{-\frac12 \pi|u|}
  \|L^{iu}F\|_p
   \,du
  \lesssim
  \frac{1}{2\pi} \int_\R e^{(\om-\frac12 \pi)|u|}\|F\|_p
    \,du
    \lesssim \|F\|_p.
 \end{align*}

 \epf

\subsection{The case $1<p\leq 2$}

We begin with some preliminary observations.

 \blem \label{lem:uSBintegral}
For $h \in \Dom(V)$ we have
 \begin{align*}
 \int_0^\infty \| \uSB(t) Vh \|^2\,dt   \leq (2k)^{-1}\|h\|^2.
 \end{align*}
 \elem

 \begin{proof}
Let $t >0.$ Using Lemma \ref{lem:SVcommute} and the fact that
$\SB(t)h \in \Dom(A)$ by analyticity, we
obtain
 \begin{align*}
 \| \uSB(t) Vh \|^2
 = \|V \SB(t)h \|^2
  & \leq k^{-1}[BV \SB(t)h, V  \SB(t)h]
 \\ & = k^{-1}[\AB \SB(t)h,  \SB(t)h]
 \\ & = -(2k)^{-1} \frac{d}{dt}\| \SB(t)h\|^2.
 \end{align*}
Hence
 \begin{align*}
 \int_0^\infty \| \uSB(t) Vh \|^2\,dt
&  \leq (2k)^{-1}\limsup_{T \to \infty}  \int_0^T -
\frac{d}{dt}\|
 \SB(t)h\|^2 \,dt
\\ & = (2k)^{-1} \Big(\|h\|^2 - \liminf_{T\to\infty} \|\SB(T)h\|^2
     \Big)
\\ &  \leq (2k)^{-1}\|h\|^2.
 \end{align*}
 \end{proof}

 \begin{lemma} \label{lem:Hfinite}
Let $f\in\F C_{\rm b}^\infty(E;\Dom(\AB))$ and $F\in
\mathcal{F}C_{\rm b}^\infty(E;\Dom(\AB)) \ot \Dom(\AB)$ be such that
$\uD f= (I\ot V)F$. Then for all $1<p<\infty $ we have  $\HH f \in
L^p$ and $\n \HH f\n_p \lesssim \n F\n_p$.
 \end{lemma}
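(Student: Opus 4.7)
The proof hinges on the factorisation, for $t>0$,
$$
\sqrt{t}\,\uD\PB(t)f \;=\;(\PB(t)\otimes I_\H)\bigl[\sqrt{t}\,(I\otimes \uSB(t)V)F\bigr],
$$
which follows from the commutation identity $\uD\PB(t)f=(\PB(t)\otimes \uSB(t))\uD f$ of Theorem~\ref{thm:uLsemigroup}, the hypothesis $\uD f=(I\otimes V)F$, and the factoring $\PB(t)\otimes \uSB(t)=(\PB(t)\otimes I_\H)\circ(I\otimes \uSB(t))$. Via Proposition~\ref{prop:sqf-radonif} applied to the $\uL^p$-valued kernel $t\mapsto \sqrt{t}\,\uD\PB(t)f$, the Littlewood--Paley--Stein norm is comparable to a $\ga$-radonifying norm:
$$
\|\HH f\|_p \;\eqsim\; \bigl\|\,t\mapsto\sqrt{t}\,\uD\PB(t)f\,\bigr\|_{\ga(L^2(\R_+,dt/t),\uL^p)}.
$$

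I would then apply the Kalton--Weis multiplier theorem (Proposition~\ref{prop:KW}) with the operator family $K(t):=\PB(t)\otimes I_\H \in \calL(\uL^p)$. By Lemma~\ref{lem:LBcalc}(1) the scalar family $\{\PB(t):t\ge 0\}$ is $R$-bounded on $L^p$, and Proposition~\ref{prop:Rtensor} propagates this to $R$-boundedness of $\{K(t):t\ge 0\}$ on $\uL^p$. The multiplier theorem, applied to the kernel $k(t):=\sqrt{t}\,(I\otimes \uSB(t)V)F$, together with a second use of Proposition~\ref{prop:sqf-radonif}, yields
$$
\|\HH f\|_p \;\lesssim\; \|k\|_{\ga(L^2(\R_+,dt/t),\uL^p)} \;\eqsim\; \Bigl\|\,y\mapsto\Bigl(\int_0^\infty \|\uSB(t)VF(y)\|^2\,dt\Bigr)^{1/2}\,\Bigr\|_p.
$$
Since $F(y)\in\Dom(\AB)\subseteq\Dom(V)$ for $\mu$-a.e.\ $y$, a pointwise application of Lemma~\ref{lem:uSBintegral} bounds the inner integral by $(2k)^{-1}\|F(y)\|_H^2$, giving $\|\HH f\|_p\lesssim(2k)^{-1/2}\|F\|_p$.

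The principal technical point is the vector-valued application of Kalton--Weis; it requires both $R$-boundedness of $\{K(t)\}$ on $\uL^p$ (handled by Proposition~\ref{prop:Rtensor}) and strong measurability of the kernel $k$ (automatic because $F$ is a finite sum of simple tensors and $\uSB$ is strongly continuous in $t$). The advantage of routing the estimate through $\ga$-norms is that all $p\in(1,\infty)$ are treated uniformly; for $p=2$ there is an elementary direct argument via Fubini, since $\PB(t)\otimes I$ is a contraction on $\uL^2$ and one obtains $\|\HH f\|_2^2\le\int_0^\infty\|(I\otimes\uSB(t)V)F\|_{\uL^2}^2\,dt\le(2k)^{-1}\|F\|_2^2$, but this simpler Minkowski-style route fails for $p<2$.
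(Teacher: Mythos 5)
Your argument is correct and is essentially the paper's own proof: the paper likewise writes $\uPB(t)\uD f=(\PB(t)\ot I)(I\ot\uSB(t))(I\ot V)F$, obtains $R$-boundedness of $\{\PB(t)\ot I\}$ on $\uL^p$ from Lemma \ref{lem:Rbdd} and Proposition \ref{prop:Rtensor}, and then combines Propositions \ref{prop:KW} and \ref{prop:sqf-radonif} with the pointwise bound of Lemma \ref{lem:uSBintegral} to get $\|\HH f\|_p\le (2k)^{-1/2}\|F\|_p$. Your version merely spells out explicitly the passage square function $\to$ $\gamma$-norm $\to$ Kalton--Weis multiplier $\to$ square function that the paper's displayed chain of inequalities leaves implicit.
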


 \begin{proof}
By Proposition \ref{prop:Rtensor} and Lemma \ref{lem:Rbdd},
the set $\{\PB(t)\ot I: \ t \geq 0\}$ is $R$-bounded in
$\calL(\uL^p)$. Hence, by Propositions \ref{prop:KW},
\ref{prop:sqf-radonif},
 and Lemma \ref{lem:uSBintegral},
 \begin{align*}
 \|\HH f\|_p
  &  = \Big\|\Big(\int_0^\infty
      \|\uPB(t)\uD f\|^2 \,dt\Big)^{1/2}\Big\|_p
  \\& =  \Big\|\Big(\int_0^\infty
      \| (\PB(t)\ot I)(I\ot\uSB(t))(I\ot V)F\|^2
  \,dt\Big)^{1/2}\Big\|_p
  \\& \lesssim \Big\|\Big( \int_0^\infty
      \| (I\ot\uSB(t))(I\ot V)F\|^2
  \,dt\Big)^{1/2}\Big\|_p
  \\& \leq (2k)^{-1/2} \| F\|_p.
    \end{align*}
 \end{proof}

The following proof is based on a classical argument which
goes back to Stein \cite{Ste70}. The same idea has been
applied in the related works \cite{CG01,CDL,MN2,Sh92}. For the
convenience of the reader we include a proof.

 \begin{proof}[Proof of the first part of Theorem \ref{thm:square_functions}, $1 < p\leq 2$]
First we show that it suffices to prove the estimate for functions $f \in
\F  C_{\rm b}^\infty(E;\Dom(\AB))$ satisfying $f \geq \eps$ for some
$\eps >0$.

Fix $f = \varphi(\phi_{h_1}, \ldots, \phi_{h_k}) \in \F C_{\rm
b}^\infty(E;\Dom(\AB))$ of the usual form. Pick functions $m_n
\in C_{\rm b}^\infty(\R^k)$ satisfying $m_n\geq0,$
${\rm{supp}}(m_n) \subseteq [-\frac1n,\frac1n]^k$, and
$\|m_n\|_1 = 1,$ and put
$$
\bal \psi_{n,\pm} & := \big(\varphi^\pm + \frac1n\big)*m_n, \\
    g_{n,\pm} & := \psi_{n,\pm}(\phi_{h_1}, \ldots, \phi_{h_k}),\\
   g_{n,\pm,j} & := \partial_j\psi_{n,\pm}(\phi_{h_1}, \ldots, \phi_{h_k}).
\eal
$$
Clearly $g_{n,\pm} \in \F C_{\rm b}^\infty(E;\Dom(\AB))$ satisfy
$\frac1n \leq g_{n,\pm}
 \leq \|\varphi\|_\infty + 1$, and
 \begin{align*}
\big\| \big(f^\pm + \frac1n\big) - g_{n,\pm}\big\|_p \to 0
 \end{align*}
by dominated convergence. From Lemma \ref{lem:Hfinite}
it follows that
  \begin{align*}
 \|  \HH f-\HH (g_{n,+} - g_{n,-})   \|_p
  &\leq \| \HH (f-(g_{n,+} - g_{n,-})) \|_p
  \\&\lesssim \Big\| \sum_{j=1}^k
   (f_j - (g_{n,+,j}-g_{j,n,-,j}))  \ot h_j \Big\|_p,
  \end{align*}
where $f_j = \partial_j\varphi(\phi_{h_1}, \ldots, \phi_{h_k}).$
Since the functions
$$g_{n,\pm,j} = \big(\partial_j \varphi(\phi_{h_1}, \ldots,
\phi_{h_k})\one_{\{\pm\varphi(\phi_{h_1}, \ldots,
\phi_{h_k})>0\}}\big)*m_n,$$ belong to $L^\infty$ uniformly in $n$,
we conclude by dominated convergence that $\|f_j - (g_{n,+,j} -
g_{n,-,j})\|_p \to 0.$ Therefore
 $\HH (g_{n,+}-g_{n,-}) \to \HH f$ in $L^p$ as $n\to\infty$.
Hence if $\| \HH g_{n,\pm}  \|_p \lesssim \|g_{n,\pm}\|_p$ with
constants not depending on $n$, then
 $$
 \bal\| \HH f \|_p
     & =          \limn \| \HH(g_{n,+} - g_{n,-}) \|_p
  \\ & \leq \limsup_{n\to\infty}  (\| \HH g_{n,+}  \|_p   + \|\HH g_{n,-}   \|_p)
  \\ & \lesssim \limsup_{n\to\infty}  (\|  g_{n,+}  \|_p  + \|  g_{n,-} \|_p)
  \\ & =  \| f^+ \|_p  + \| f^- \|_p
  \\ & \leq 2\| f \|_p.
 \eal $$
Thus it suffices to prove the result
for $f \in \F C_{\rm b}^\infty(E;\Dom(A))$ satisfying $f \geq \eps$
for some $\eps
>0$. Set
$$u(t,x) := \PB(t)f(x), \qquad x\in E, \ t>0,$$
and notice that by Mehler's formula \eqref{eq:Mehler} we have
$u(t,x)\geq\eps$ for all $x\in E$ and $t\geq0$.
 By Lemma \ref{lem:Fcore} we
have $u(t,\cdot)\in \F  C_{\rm b}^\infty(E;\Dom(\AB))
\subseteq \D(\LB)$ for all $t\ge 0$. Arguing as in
\cite{CG01,CDL,Sh92}, for $1 < p \leq 2$ we use Lemma
\ref{lem:Fcore} and a truncation argument to obtain that
$u(t,\cdot)^p\in\D(\LB)$ and
$$\bal
 (\partial_t + \LB)u(t,x)^p
 &  = p u(t,x)^{p-1} (\partial_t + \LB)u(t,x)
  \\& \quad - p(p-1) u(t,x)^{p-2}
   [B\uD u(t,x),\uD u(t,x) ]
 \\&  = - p(p-1) u(t,x)^{p-2}   [B\uD u(t,x),\uD u(t,x) ].
\eal$$ Hence, using the coercivity Assumption (A3),
\begin{align*}
\  \|\uD u(t,x)\|^2 &\leq k^{-1} [B\uD u(t,x),\uD u(t,x) ]
 \\ &  = -  \frac{1}{kp(p-1)}  u(t,x)^{2-p}(\partial_t + \LB)u(t,x)^p.
 \end{align*}
Now we set $$K(x) := -\int_0^\infty
  (\partial_t + \LB)u(t,x)^p\;dt
$$
 and
$$ u_\star(x) := \sup_{t >0} u(t,x) $$
to obtain
\begin{align*}
 \HH f(x)^2
   & =  \int_0^\infty \|\uD u(t,x)\|^2 \;dt
\\ & \leq - C_{p,k} \int_0^\infty u(t,x)^{2-p}(\partial_t + \LB)u(t,x)^p \;dt
\\ & \leq C_{p,k}  u_\star(x)^{2-p} K(x).
 \end{align*}
H\"older's inequality with exponents $\tfrac{2}{2-p}$ and
$\tfrac{2}{p}$ implies \beq\bal\label{eq:Hf}
 \int_E \HH f(x)^p \;d\mu(x)
  &  \leq  C_{p,k}^{\frac{p}2}
    \int_E u_\star(x)^{\frac{(2-p)p}{2}}
   K(x)^{\frac{p}{2}} \;d\mu(x)
 \\& \leq  C_{p,k}^{\frac{p}2}
   \Big(\int_E  u_\star(x)^{p} \;d\mu(x)\Big)^{\frac{2-p}{2}}
   \Big(\int_E  K(x) \;d\mu(x)\Big)^{\frac{p}2}.
\eal \eeq
Using the invariance of $\mu$ and the $L^p$-contractivity of $\PB$ we
obtain \beq\label{eq:K} \bal
  \int_E  K(x) \;d\mu(x)
 &  = - \int_0^\infty \int_E(\partial_t + \LB)u(t,x)^p\;d\mu(x) \;dt
 \\& = - \int_0^\infty \int_E \partial_t u(t,x)^p    \;d\mu(x) \;dt
 \\& = - \int_0^\infty \partial_t\int_E  u(t,x)^p    \;d\mu(x) \;dt
 \\& \leq \limsup_{t\to\infty} \big(\|f\|_p^p -
 \|u(t,\cdot)\|_p^p\big)
 \\& \leq  \|f\|_p^p,
\eal \eeq where the use of Fubini's theorem is justified by the
non-negativity of the integrand $K,$  and the interchange of
differentiation and integration by the fact that $f\in\F C_{\rm
b}^\infty(E;\Dom(\AB)).$

Combining \eqref{eq:Hf}, \eqref{eq:K} and Proposition
\ref{prop:Cowling} we conclude that
$$
 \| \HH f \|_p^p
  \lesssim \|u_\star\|_p^{\frac{(2-p)p}{2}}
  \|f\|_p^{\frac{p^2}{2}} \lesssim \|f\|_p^p.
$$
 \end{proof}

\begin{proof}[Proof of Theorem \ref{thm:Rgradientestimates}, $1 < p\leq 2$]
First we show that $\D(L)$ is contained in
$\D(\uD)$. Once we know this, Lemmas
 \ref{lem:Fcore} and \ref{lem:polyn-core} imply that
$\D(\LB)$ is even a core for $\D(\uD)$.

Fix a function $f\in \F C_{\rm b}^\infty(E;\Dom(\AB))$. From
Theorem \ref{thm:uLsemigroup} it follows that $s\mapsto e^{-s} \uD
\PB(s)f=e^{-s} \uPB(s)\uD f $\
 is Bochner integrable in $\uL^p$ and
$$ \int_0^\infty e^{-s} \uD \PB(s)f\,ds = (I+\uLB)^{-1}\uD f.$$
Since $s\mapsto e^{-s} \PB(s)f$ is Bochner integrable in $L^p$, the
closedness of $\uD$ implies that $(I+\LB)^{-1}f = \int_0^\infty
e^{-s} \PB(s)f\,ds \in \D(\uD)$ and
$$ \uD (I+\LB)^{-1}f = \uD \int_0^\infty e^{-s} \PB(s)f\,ds = \int_0^\infty e^{-s}
\uD \PB(s)f\,ds.$$ Moreover, by the Cauchy-Schwarz inequality,
$$
\bal \n \uD (I+\LB)^{-1}f\n_p & \le \Big\n \int_0^\infty e^{-s} \n
\uD \PB(s)f\n\,ds \Big\n_p
\\ & \le \frac1{\sqrt 2} \Big\n \Big(\int_0^\infty
\n \uD \PB(s)f\n^2\,ds\Big)^{1/2} \Big\n_p
\\ & = \frac1{\sqrt 2}
\n\HH f\n_p \lesssim \n f\n_p. \eal
$$
It follows that $\uD (I+\LB)^{-1}$ extends to a bounded operator
from $L^p$ to $\uL^p$. In view of the closedness of $\uD$ and Lemma
\ref{lem:Fcore}, the desired inclusion follows from this.
This concludes the proof that $\D(\LB)$ is a core for $\D(\uD)$.

The $R$-boundedness assertions follow from Proposition \ref{prop:sq-fc1}
and Remark \ref{rem:sq-fc1}.
\end{proof}

 \subsection{The case $2<p<\infty$}
\label{subsec:p2}

In case that $\PB$ is symmetric it is possible to use a variant of a
duality argument of Stein \cite{Ste70} to prove the boundedness of
$\HH.$ This approach has been taken in \cite{CG01}, but the proof
breaks down if $\LB$ is non-symmetric and we have to
proceed in a different way.

First we derive an explicit formula for the
semigroup $\PB$ which allows us to prove suitable gradient bounds.
Having obtained those gradient bounds we give a general argument
involving a maximal inequality for $\PB^*$ to prove the
$R$-boundedness of the collection $\{\sqrt{t}\uD \PB(t) :\ t >0\}.$
Since $\LB$ has a bounded $H^\infty$-calculus, we obtain the
boundedness of $\HH$ by an appeal to Proposition \ref{prop:sq-fc1}.
\medskip

We begin with some preliminary observations.
For $0 < t < \infty$ we define the operators $Q_t\in \calL(E\s,E)$ by
$$Q_t x^* := ii^*x^*-
i\SB^*(t)\SB(t)i^*x^*,$$ where $i:H\embed E$ is the inclusion
operator. The operators $Q_t$ are positive and symmetric,
i.e., for all $x^*,y^* \in E^*$ we have $ \ip{Q_t x^*, x^*}
\geq 0$ and $ \ip{Q_t x^*, y^*}= \ip{Q_t y^*, x^*}$. Let $H_t$
be the reproducing kernel Hilbert space associated with $Q_t$
and let $i_t : H_t \embed E$ be the inclusion mapping. Then,
$$i_t i_t\s = Q_t.$$ Since $ \ip{Q_t x^*,x^*} \leq \ip{ii^*
x^*,x^*}$ for all $x\s\in E\s,$ the operators $Q_t$ are
covariances of centred Gaussian measures $\mu_t$ on $E$; see,
e.g., \cite{GvN03}.  This estimate also implies that we have a
continuous inclusion $H_t\embed H$ and that the mapping
 \begin{align*}
 V_t: i^*x^* \mapsto i_t^*x^*, \quad x^* \in E^*,
 \end{align*}
is well defined and extends to a contraction from $H$ into $H_t.$ It is easy to
check that the adjoint operator
$V_t\s$ is the inclusion from $H_t$ into $H.$

Let us also note that for  $s \leq
t$ and $x\s\in E\s$ we have
 \begin{align*}
 \ip{Q_s x^*,x^*}  =\|i^*x\|^2 - \|\SB(s)i^*x^*\|^2
               \leq \|i^*x\|^2 - \|\SB(t)i^*x^*\|^2
                   = \ip{Q_t x^*,x^*}
 \end{align*}
by the contractivity of $\SB$.

In the next proposition we fix $t>0$ and $h \in H_t$ and denote by
$\phi_h^{\mu_t}:E\to \R$ the ($\mu_t$-essentially unique; see Section
\ref{sec:elliptic}) $\mu_t$-measurable linear extension of the function
$\phi_h^{\mu_t}(i_t g) := [g,h]_{H_t}.$

 \bpr \label{prop:Pformula}
For all $f = \varphi(\phi_{h_1},\ldots,
\phi_{h_n})\ot h \in \F C_{\rm b}(E)\otimes \HH$, where $\HH$ is some Hilbert
space, the following identity holds for $\mu$-almost all $x\in E$:
 \begin{align*}
 (P(t) \ot I) f (x) = \int_E \varphi(\phi_{\SB(t)h_1}(x) + \phi_{V_t h_1}^{\mu_t}(y)
                    ,\ldots, \phi_{\SB(t)h_n}(x) + \phi_{V_t h_n}^{\mu_t}(y))h
           \,d\mu_t(y).
 \end{align*}
 \epr

 \bpf
Defining $\psi:E\times \R^n\to \HH$ by
$$\psi(x,\xi) := \varphi(\phi_{\SB(t)h_1}(x) + \xi_1, \ldots,
\phi_{\SB(t)h_n}(x) + \xi_n)h,$$ we have
 \begin{align*}
 \int_E & \varphi(\phi_{\SB(t)h_1}(x) + \phi_{V_t h_1}^{\mu_t}(y)
                    , \ldots, \phi_{\SB(t)h_n}(x) + \phi_{V_t h_n}^{\mu_t}(y))h
           \,d\mu_t(y)
  \\& =  \int_E \psi\big(x, (\phi_{V_t h_1}^{\mu_t}(y),
                       \ldots, \phi_{V_t h_n}^{\mu_t}(y))\big)  \,d\mu_t(y)
  \\& =  \int_{\R^n} \psi(x, \xi)  \,d\ga_t(\xi),
 \end{align*}
where $\ga_t$ is the centred Gaussian measure on $\R^n$ whose
covariance matrix equals $\big( [V_t h_i, V_t h_j] \big)_{i,j=1}^n.$

On the other hand, writing $R(t) = \sqrt{I-\SB^*(t)\SB(t)}$, by
Mehler's formula \eqref{eq:Mehler} we have
 \begin{align*}
 (P(t) \ot I)f(x) &= \int_E \varphi( \phi_{\SB(t)h_1}(x) +
  \phi_{R(t) h_1}(y), \ldots,
  \phi_{\SB(t)h_n}(x) + \phi_{R(t)h_n}(y))h\,d\mu(y)
 \\ & =  \int_E \psi\big(x,(\phi_{R(t) h_1}(y), \ldots,
  \phi_{R(t)h_n}(y))\big)\,d\mu(y)
\\ & =  \int_{\R^n} \psi(x,\xi)\,d\tilde\ga_t(\xi),
 \end{align*}
where $\tilde\ga_t$ is the centred Gaussian measure on $\R^n$ whose
covariance matrix equals $\big( [R(t) h_i, R(t) h_j]
\big)_{i,j=1}^n.$

The result follows from the observation that
 \begin{align*}
 [V_t h_i, V_t h_j]  = [h_i, h_j] - [\SB(t)h_i,\SB(t)h_j]
   = [R(t) h_i, R(t) h_j].
 \end{align*}
 \epf

 \blem \label{lem:SBVinHt}
For all $u \in \H$ and $t>0$ we have $\uSB^*(t)  u\in \Dom(V\s)$,
$V^* \uSB^*(t)  u \in H_t$, and $$\|V^* \uSB^*(t) u\|_{H_t} \lesssim
\frac{1}{\sqrt{t}}\|u\|.$$
 \elem

 \bpf
First we observe that $\SB(s)$ maps $H$ into $\Dom(\AB)
\subseteq \Dom(V)$ for $s>0.$ For $t >0$ we claim that $$J_t:
V_t h \mapsto V \SB(\cdot)h$$ extends to a bounded operator
from $H_t$ into $L^2(0,t;\H)$ of norm $\le \frac1{\sqrt{2k}}$.

Indeed, by the coercivity of $B$ and the definition of $H_t,$ we
obtain for $h \in H,$
 \begin{align*}
 \int_0^t \|V \SB(s) h\|^2 \,ds
 &  \leq \frac{1}{k} \int_0^t [B V \SB(s) h,V \SB(s) h] \,ds
  \\ & =  -\frac{1}{2k} \int_0^t \frac{d}{ds} \|\SB(s) h\|^2 \,ds
  \\ & =  \tfrac{1}{2k} \big(\|h\|^2 - \|\SB(t) h\|^2 \big)
  \\ &  =  \tfrac{1}{2k} \|V_t h\|_{H_t}^2.
 \end{align*}
Recall that $V_t\s$ is the inclusion mapping $H_t\embed H$. Noting
that $\uSB^*(t)$ maps $\H$ into $\Dom(\uAB\s) \subseteq \Dom(V^*)$ and
using Lemma \ref{lem:SVcommute}, the adjoint mapping $J_t^* :
L^2(0,t;\H) \to H_t$ is given by
 \begin{align*}
 V_t^*  J_t^* f = \int_0^t V^* \uSB^*(s) f(s) \,ds, \quad
      f \in L^2(0,t;\H).
 \end{align*}
The resulting identity $V^* \uSB^*(t) u = \frac{1}{t} V_t^*
J_t^* \big(\uSB^*(t-\cdot)
 u \big)$ shows that $V^* \uSB^*(t) u$ can be identified with the element
$\frac{1}{t}J_t^* \big(\uSB^*(t-\cdot)u \big)$ of $H_t$ and we
obtain
 \begin{align*}
  \|   V^* \uSB^*(t) u \|_{H_t}
  &=  \frac{1}{t}\| J_t^* \big( \uSB^*(t-\cdot) u \big) \|_{H_t}
  \\ & \leq\frac{1}{t\sqrt{2k}}\|\uSB^*(t-\cdot)u
        \|_{L^2(0,t;\H)}
  \\ & \leq \frac{1}{\sqrt{2kt}} \sup_{s \geq 0}
       \| \uSB^*(s)\|_{\L(\H)} \| u \|.
 \end{align*}
 \epf

The following pointwise gradient bound is included for reasons of completeness.
We shall only need the special case corresponding to $r=2$, for which a simpler
proof can be given; see Remark \ref{rem:r2}.

 \begin{theorem}[Pointwise gradient bounds]
\label{thm:pointwisegrad} Let $1 < r <\infty.$ For $f \in \F C_{\rm
b}(E)$ and $t >0$ we have, for $\mu$-almost all $x\in E$,
 \begin{align*}
  \sqrt{t}\| \uD \PB(t) f(x)\| \lesssim (\PB(t)|f|^r(x))^{1/r}.
 \end{align*}
 \end{theorem}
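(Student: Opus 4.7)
The plan is to differentiate the explicit representation of $\PB(t)f$ given by Proposition \ref{prop:Pformula} in $x$, apply Gaussian integration by parts on $(E,\mu_t)$ in a carefully chosen direction, and finish with Hölder's inequality. By a density/approximation argument it suffices to prove the bound for $f = \varphi(\phi_{h_1},\ldots,\phi_{h_n})$ with $\varphi \in C_{\rm b}^\infty(\R^n)$ and $h_1,\dots,h_n\in H$, the general case following by approximation of $\varphi$ by smooth functions.

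First, fix such an $f$ and a unit vector $u\in\H$. Differentiating the formula of Proposition \ref{prop:Pformula} in $x$ (possible because $\phi_{\SB(t)h_j}(\cdot)$ is linear and $\varphi$ is smooth), and using Lemma \ref{lem:SVcommute} to write $V\SB(t)h_j = \uSB(t)Vh_j$, we obtain
\begin{equation*}
[u,\uD \PB(t)f(x)]_\H
 = \sum_{j=1}^n \int_E \partial_j\varphi\big(\phi_{\SB(t)h_1}(x)+\phi_{V_th_1}^{\mu_t}(y),\dots\big)
       [V\s \uSB\s(t)u,h_j]_H\, d\mu_t(y).
\end{equation*}
By Lemma \ref{lem:SBVinHt} the vector $g := V\s \uSB\s(t)u$ lies in $H_t$ with $\|g\|_{H_t}\lesssim \|u\|/\sqrt{t}$. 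Since $V_t\s$ is the inclusion $H_t\embed H$, we have $[g,h_j]_H = [g,V_t h_j]_{H_t}$, and hence
\begin{equation*}
\sum_{j=1}^n \partial_j\varphi(\dots)\,[V_th_j,g]_{H_t}
 = D_g^{\mu_t} F(x,y),
\end{equation*}
where $F(x,y)$ denotes the integrand above and $D_g^{\mu_t}$ is the directional derivative at $y$ in direction $g\in H_t$ on the Wiener space $(E,H_t,\mu_t)$.

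The Gaussian integration-by-parts formula on $(E,\mu_t)$ in direction $g\in H_t$ yields
\begin{equation*}
\int_E D_g^{\mu_t} F(x,y)\,d\mu_t(y) = \int_E F(x,y)\,\phi_g^{\mu_t}(y)\,d\mu_t(y),
\end{equation*}
and Hölder's inequality with exponents $r$ and $r':=r/(r-1)$ then gives
\begin{equation*}
|[u,\uD \PB(t)f(x)]_\H|
 \le \Big(\int_E |F(x,y)|^r\,d\mu_t(y)\Big)^{1/r}\Big(\int_E |\phi_g^{\mu_t}(y)|^{r'}\,d\mu_t(y)\Big)^{1/r'}.
\end{equation*}
Applying Proposition \ref{prop:Pformula} to $|f|^r$ identifies the first factor as $(\PB(t)|f|^r(x))^{1/r}$, while the second factor is the $L^{r'}(\mu_t)$-norm of a centred Gaussian with variance $\|g\|_{H_t}^2$, hence bounded by $c_{r'}\|g\|_{H_t}\lesssim \|u\|/\sqrt{t}$ by Lemma \ref{lem:SBVinHt}. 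Taking the supremum over unit vectors $u\in\H$ and multiplying by $\sqrt{t}$ gives the desired bound.

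The main obstacle is the proper setup of the Gaussian integration by parts: this requires that the test direction $g=V\s\uSB\s(t)u$ belongs to the reproducing kernel Hilbert space $H_t$ of $\mu_t$ (not merely to $H$), together with a quantitative $\sqrt{t}$-gain on its $H_t$-norm. Both are precisely the content of Lemma \ref{lem:SBVinHt}, which in turn was engineered so that the coercivity of $B$ on $\ov{\Ran(V)}$ yields the $t^{-1/2}$ scaling that ultimately produces the desired $\sqrt{t}$ on the left-hand side.
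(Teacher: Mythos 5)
Your proof is correct and follows essentially the same route as the paper: the same representation of $\PB(t)$ from Proposition \ref{prop:Pformula}, the same key Lemma \ref{lem:SBVinHt} placing $V\s\uSB\s(t)u$ in $H_t$ with the $t^{-1/2}$ bound, and the same H\"older/Gaussian-moment conclusion with a supremum over unit vectors $u$. The only divergence is how the intermediate identity $[\uD\PB(t)f(x),u]=\int_E\phi^{\mu_t}_{V\s\uSB\s(t)u}(y)\,\varphi(\cdots)\,d\mu_t(y)$ is derived: you mollify $\varphi$, differentiate under the integral and apply Gaussian integration by parts on $(E,H_t,\mu_t)$, whereas the paper computes the difference quotient directly via the Cameron--Martin formula, which treats merely continuous $\varphi\in C_{\rm b}$ at once and avoids the (routine, but left implicit in your write-up) passage to the limit needed to transfer the pointwise bound from the mollified functions back to $f$.
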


 \bpf
For notational simplicity we take $f$ of the form $f =
\varphi(\phi_{h})$ with $\varphi \in C_{\rm b}(\R)$ and $h \in H.$ It is
immediate to check that the argument carries over to general
cylindrical functions in $\F C_{\rm b}(E)$.

By Lemma \ref{lem:SBVinHt} we have $\SB^*(t) V^* u \in H_t$
for $u \in \Dom(V^*)$ and therefore,  for all $h \in H,$
$$\phi_{\SB(t)h}(iV^*u) = [\SB(t)h, V^*u]  = [h, \SB^*(t)V^*u]=
 \phi_{V_t h}^{\mu_t}(i\SB^*(t)V^*u).
$$
By Proposition \ref{prop:Pformula} (with $\HH=\R)$
we find that for all $g \in H,$
 \begin{align*}
 \PB(t)f(x+ iV^*u)
 & = \int_E \varphi(\phi_{\SB(t)h}(x+iV^*u)
   +\phi_{V_t h}^{\mu_t}(y))   \,d\mu_t(y)
 \\ & = \int_E \varphi(\phi_{\SB(t)h}(x)
             + \phi_{V_t h}^{\mu_t}(y+i\SB^*(t)V^*u))
     \,d\mu_t(y).
 \end{align*}
Recalling that $\DH$ denotes the Malliavin derivative we have, for all $u
\in \Dom(V^*)$,
 \begin{align*}
 [\uD \PB(t)f(x),u]
 & = [\DH \PB(t)f(x), V^*u]
 \\& = \lim_{\eps \downarrow 0}\frac1\eps \Big( \PB(t)f(x+\eps iV^*u) - \PB(t)f(x) \Big)
 \\& = \lim_{\eps \downarrow 0}\frac1\eps
\int_E \varphi(\phi_{\SB(t)h}(x) +\phi_{V_t h}^{\mu_t}(y+\eps
i\SB^*(t)V^*u))
   \\&\qquad\qquad\qquad - \varphi(\phi_{\SB(t)h}(x) +\phi_{V_t h}^{\mu_t}(y))  \,d\mu_t(y).
 \end{align*}
Using Lemma \ref{lem:SBVinHt} and the Cameron-Martin formula \cite[Corollary 2.4.3]{Bo} we obtain
$$
 [\uD \PB(t)f(x),u]
 = \lim_{\eps \downarrow 0}\frac1\eps
       \int_E \big(E_{\eps \SB^*(t)V^*u}^{\mu_t}(y) - 1\big)
       \varphi(\phi_{\SB(t)h}(x) +\phi_{V_t h}^{\mu_t}(y))  \,d\mu_t(y),
$$
where $ E_{h}^{\mu_t}(y) = \exp\big(\phi^{\mu_t}_h(y) - \tfrac12\n
h\n_{H_t}^2\big).$ It is easy to see that for each $h\in H_t$ the
family $\big(\frac1\e(E_{\e h}^{\mu_t}-1)\big)_{0<\e<1}$ is uniformly
bounded in $L^2(E,\mu_t)$, and therefore uniformly integrable in
$L^1(E,\mu_t)$. Passage to the limit $\eps\downarrow 0$ now gives
 \begin{align*}
 [\uD \PB(t)f(x),u]
 =  \int_E \phi_{\SB^*(t)V^*u}^{\mu_t}(y)
       \varphi(\phi_{\SB(t)h}(x) +\phi_{V_t h}^{\mu_t}(y))  \,d\mu_t(y).
 \end{align*}
By H\"older's inequality with $\frac1q+\frac1r=1$, using the Gaussianity of
$\phi_{\SB^*(t)V^*u}^{\mu_t}$ on $(E,\mu_t)$ and the
Kahane-Khintchine inequality, Proposition \ref{prop:Pformula}, and
Lemma \ref{lem:SBVinHt} we find that
$$
\bal \ & |[\uD \PB(t)f(x), u]|
\\ & \qquad \le \Big(\int_E|\phi_{\SB^*(t)V^*u}^{\mu_t}(y)|^q\,d\mu_t(y)\Big)^{1/q}
    \Big(\int_E |\varphi(\phi_{\SB(t)h}(x)+\phi_{V_t h}^{\mu_t}(y))|^r\,d\mu_t(y)\Big)^{1/r}
\\ & \qquad \lesssim \Big(\int_E |\phi_{\SB^*(t)V^*u}^{\mu_t}(y)|^2\,d\mu_t(y)\Big)^{1/2} (\PB(t)|f|^r(x))^\frac{1}{r}
\\ & \qquad =  \n \SB^*(t)V^*u\n_{H_t} (\PB(t)|f|^r(x))^{\frac{1}{r}}
\\ & \qquad \lesssim \frac{1}{\sqrt t} \n u\n (\PB(t)|f|^r(x))^{\frac{1}{r}}.
  \eal
$$
The desired estimate is obtained by taking the supremum over all
$u\in \Dom(V^*)$ with $\|u\| \leq 1$.
 \epf

\begin{remark}\label{rem:r2}
There is a well-known elementary trick which
we learned from \cite[p.328]{Led00}) which can be
used to prove Theorem \ref{thm:pointwisegrad} for $r=2$. Using the
product rule from Lemma \ref{lem:Fcore}, the fact that $\|Bu \| \ge
k\|u\|$ for $u \in \overline{\Ran(V)}$, and the positivity of
$\PB(s)$, we obtain  \begin{align*}  \PB(t) f^2 - (\PB(t)f)^2   &  =
\int_0^t  \partial_s \big(\PB(s)\big( |\PB(t-s)f|^2           \big)
\big)\, d s  \\& = -\int_0^t \PB(s) \big( \LB(\PB(t-s)f)^2 - 2
\PB(t-s)f \cdot \LB \PB(t-s) f        \big) \, ds  \\& = 2\int_0^t
\PB(s) \big(      \| B\uD \PB(t-s)f\|^2  \big) \, ds  \\& \ge
2k \int_0^t \PB(s) \big(      \| \uD \PB(t-s)f\|^2  \big) \, ds.
 \end{align*} Next we estimate, for $\mu$-almost all $x\in E$,
 \begin{align*}
        M^2 \PB(r) ( \| \uD f\|^2) (x)
  &   \ge  \PB(r) ( \| \uS(r)\uD f\|^2)(x)
  \\ & \stackrel{(*)}{\ge} \| (\PB(r) \ot I) (  \uS(r)\uD
f)\|^2(x)  
\\ & = \| \uPB(r) \uD f(x)\|^2   \\ & = \|\uD \PB(r)f(x)\|^2,
   \end{align*} where $M:=\sup_{t\ge 0} \n \underline{S}(t)\n$
and $(*)$ follows from Proposition \ref{prop:Pformula} (with
$\HH = \H$) and Jensen's inequality. The case $r=2$ of Theorem
\ref{thm:pointwisegrad} follows from these two estimates.
\end{remark}

The next result is in some sense the dual
version of a maximal inequality. It could be compared with the dual
version of the non-commutative Doob inequality of \cite{Ju02}.

 \begin{proposition} \label{prop:tent}
Let $(M,\mu)$ be a $\sigma$-finite
measure space, $1\le p<\infty$, and let $(T(t))_{t > 0}$ be a family of positive operators on
$L^p:= L^p(M,\mu)$.  Suppose that the maximal
function $T_\star^*f := \sup_{t>0} |T^*(t)f|$ is measurable
and $L^q$-bounded, where $\frac1p+\frac1q = 1.$ Then, for all
$f_1, \ldots, f_n\in L^p$ and all $t_1, \ldots, t_n >0$,
 \begin{align*}
 \Big\| \sum_{k=1}^n T(t_k) |f_k| \Big\|_p
  \lesssim  \Big\| \sum_{k=1}^n  |f_k| \Big\|_p.
 \end{align*}
 \end{proposition}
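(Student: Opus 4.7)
The plan is to argue by duality. Write
\[
\Big\| \sum_{k=1}^n T(t_k) |f_k| \Big\|_p = \sup \Big\{ \int_M \Big(\sum_{k=1}^n T(t_k) |f_k|\Big) g \,d\mu : \|g\|_q \le 1 \Big\},
\]
and observe that the supremum is unchanged if we restrict to $g\ge 0$: since each $T(t_k)$ is positive, the integrand on the right is non-negative when tested against $|g|$, and replacing $g$ by $|g|$ only increases the integral. For such $g\ge 0$, use the adjoint relation pointwise in $k$ to rewrite
\[
\int_M T(t_k)|f_k| \cdot g \,d\mu = \int_M |f_k| \cdot T^*(t_k) g \,d\mu.
\]

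The next step is the key estimate: because each $T(t)$ is positive on $L^p$, its adjoint $T^*(t)$ is positive on $L^q$, so for $g\ge 0$ we have $T^*(t_k) g \ge 0$ and therefore $T^*(t_k) g \le T_\star^* g$ pointwise $\mu$-a.e. Summing over $k$ and using this bound gives
\[
\sum_{k=1}^n \int_M T(t_k)|f_k| \cdot g \,d\mu \le \int_M \Big(\sum_{k=1}^n |f_k|\Big) T_\star^* g \,d\mu.
\]

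Finally, apply H\"older's inequality and the assumed $L^q$-boundedness of $T_\star^*$:
\[
\int_M \Big(\sum_{k=1}^n |f_k|\Big) T_\star^* g \,d\mu \le \Big\|\sum_{k=1}^n |f_k|\Big\|_p \, \|T_\star^* g\|_q \lesssim \Big\|\sum_{k=1}^n |f_k|\Big\|_p \, \|g\|_q.
\]
Taking the supremum over $g \ge 0$ with $\|g\|_q\le 1$ yields the claimed inequality. There is no genuine obstacle here; the only point requiring care is the measurability of $T_\star^* g$, which is granted by hypothesis, and the positivity of $T^*(t)$, which follows automatically from the positivity of $T(t)$ by testing against non-negative functions.
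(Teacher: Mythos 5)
Your proof is correct and is essentially the paper's argument: dualize against $L^q$, pass to the adjoint, dominate $T^*(t_k)$ applied to the test function by the maximal function $T_\star^*$, and invoke its $L^q$-boundedness. The only (harmless) difference is that you exploit the non-negativity of $\sum_{k} T(t_k)|f_k|$ to test against a single scalar $g\ge 0$, whereas the paper uses the $L^p(\ell^1_n)$--$L^q(\ell^\infty_n)$ duality with a vector $(g_k)$ and therefore needs the extra positivity step $\sup_{1\le k\le n} T_\star^*|g_k|\le T_\star^*(\sup_{1\le k\le n}|g_k|)$; your scalar reduction is a slight streamlining of the same route.
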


 \begin{proof}
Taking the supremum over all $g=(g_k)_{k=1}^n \in L^q(\ell_n^\infty)$ of norm
one we obtain
 \begin{align*}
 \Big\| \sum_{k=1}^n T(t_k) |f_k| \Big\|_p
   &= \| \big( T(t_{(\cdot)}) |f_{(\cdot)}| \big) \|_{L^p(\ell_n^1)}
  \\& = \sup_{g}
     \int_E \sum_{k=1}^n  T(t_k) |f_k| \cdot  g_k \,d\mu
  \\& = \sup_{g}
     \int_E \sum_{k=1}^n  |f_k| \cdot T^*(t_k) g_k \,d\mu
  \\& \leq
      \| \big( |f_{(\cdot)}| \big) \|_{L^p(\ell_n^1)}
      \sup_{g} \| \big( T^*(t_{(\cdot)}) g_{(\cdot)} \big) \|_{L^q(\ell_n^\infty)}.
 \end{align*}
Using the positivity of $T^*$ on $L^q$ to obtain $ \sup_{1\le
k\le n} T_\star^* |g_k| \leq T_\star^* (\sup_{1\le k\le n} |g_k|)$ we
estimate
$$\bal
  \big\| \big( T^*(t_{(\cdot)}) g_{(\cdot)} \big) \big\|_{L^q(\ell_n^\infty)}
 & = \big\| \sup_{1\le k\le n} | T^*(t_k) g_k | \,\big\|_{L^q}
 \\ & \leq \big\| \sup_{1\le k\le n}  T_\star^* |g_k| \,\big\|_{L^q}
 \\ & \leq \big\|  T_\star^* (\sup_{1\le k\le n}  |g_k|)  \,\big\|_{L^q}
 \\ & \lesssim \big\|  \sup_{1\le k\le n}  |g_k|  \,\big\|_{L^q}
 \\ & = \| (g_k)  \|_{L^q(\ell_n^\infty)}.
\eal
$$
This completes the proof.
 \end{proof}

The previous two results are now combined to prove:

 \begin{proof}[Proof of Theorem \ref{thm:Rgradientestimates} (for $2< p<\infty$)]
Let $\frac{2}{p}+\frac1q = 1.$ Proposition \ref{prop:Cowling}
implies that the maximal function
$$\PB_\star^* f := \sup_{t>0} |\PB^*(t)f|$$ is bounded on
$L^{q}.$ Using Theorem \ref{thm:pointwisegrad} (for $r=2$) and Proposition
\ref{prop:tent} we obtain, for all $f_1,\dots, f_n \in \F
C_{\rm b}(E)$,
$$
\bal  \Big\| \Big(\sum_{k=1}^n \|\sqrt{t_k}\uD
\PB(t_k)f_k\|^2 \Big)^{1/2} \Big\|_p
 & \lesssim \Big\| \Big(\sum_{k=1}^n \PB(t_k)|f_k|^2 \Big)^{1/2} \Big\|_p
 \\ &  = \Big\|\sum_{k=1}^n \PB(t_k)|f_k|^2  \Big\|_{p/2}^{1/2}
 \\ & \lesssim \Big\|\sum_{k=1}^n |f_k|^2  \Big\|_{p/2}^{1/2}
 \\ & = \Big\|  \Big(\sum_{k=1}^n |f_k|^2\Big)^{1/2} \Big\|_p.
 \eal
$$
By an approximation argument this estimate extends to
arbitrary $f_1,\dots, f_n \in L^p$.
Now Proposition \ref{prop:R-bdd-SQF} implies the $R$-boundedness of $\{\sqrt{t}\uD \PB(t) :\ t >0\}.$

Taking Laplace transforms and using Proposition \ref{prop:Rboundedintegral},
it follows that $\D(\LB)\subseteq \D(\uD)$ and that the collection
$\{t\uD (I+t^2 \LB)^{-1}\ : t >0\}$ is $R$-bounded from $L^p$ into $\uL^p.$
As in the case $1<p\le 2$, Lemmas
 \ref{lem:Fcore} and \ref{lem:polyn-core} imply that $\D(\LB)$ is even a core for
 $\D(\uD)$.
\epf

\begin{proof}[Proof of the first part of Theorem  \ref{thm:square_functions} (for $2< p<\infty$)]
By Lemma \ref{lem:LBcalc}, $\LB$ has a bounded $H^\infty$-calculus of angle
$<\frac12 \pi$, and the result follows from
Proposition \ref{prop:sq-fc1}.
\epf

\subsection{Completion of the proof of Theorem \ref{thm:square_functions}}
It remains to prove, for $1<p<\infty$, the left-hand side inequality of Theorem \ref{thm:square_functions}. We adapt a standard duality argument
(see, e.g., \cite[Section 7, Step 8]{Au07}).

It is enough to prove the estimate for $f\in \Ran_p(\LB)$; for such $f$ we have
$f - P_{\Null_p(\LB)}f =f$. 
First let $f= \LB g$ with $g\in \D(\LB^2)$. Then by \cite[Lemma 9.13]{KuW},
$$ \lim_{t\to\infty} \PB(t)\LB g - \LB g = -\lim_{t\to\infty}\int_0^t \PB(s)\LB^2 g\,ds
=  -\lim_{t\to\infty}\int_0^t \psi(s\LB) \LB g\,\frac{ds}{s}
=-\LB g,$$
where $\psi(z) = ze^{-z}$. Hence,
$\lim_{t\to\infty} \PB(t)\LB g  = 0.$
By a density argument, this implies
$$  \lim_{t\to\infty} \PB(t)f = 0, \quad f\in \ov{\Ran(\LB)}.$$
Fix $f \in L^2\cap \ov{\Ran_p(\LB)}$ and $g \in L^2\cap L^q$
with $\frac1p +\frac1q =1.$ For any $t>0,$
 \begin{align*}
 -  \partial_t \int_E (\PB(t)f) g \, d\mu
     & =  \int_E \LB (\PB(t)f) g \, d\mu \\
     & =  \int_E \LB (\PB(\tfrac12 t)f) \PB\s(\tfrac12 t)g \, d\mu\\
     & =   \int_E [B \uD\PB(\tfrac12 t)f, \uD \PB\s(\tfrac12 t)g] \, d\mu.
 \end{align*}
Since $\int_E f \, d\mu = 0$  we
obtain, using Theorem \ref{thm:square_functions} applied to the adjoint semigroup $\PB\s$ (which is generated by $\LB\s = \uD\s B\s \uD$) in $L^q,$
 \begin{align*}
  \int_E  fg \, d\mu
      & =   \int_E fg \, d\mu - \int_E  f \, d\mu \int_E  g \, d\mu
  \\  & =  \lim_{\eps \downarrow 0} \int_E (\PB(\eps)f) g \, d\mu
         - \lim_{t \to \infty} \int_E (\PB(t)f) g \, d\mu
  \\  & = -  \int_0^ \infty \!\!  \partial_t \int_E (\PB(t)f) g \, d\mu \, dt
  \\  & =  \int_0^ \infty \!\!\!  \int_E [B \uD\PB(\tfrac12 t)f, \uD \PB\s(\tfrac12 t)g]
                   \, d\mu \, dt
  \\  & \leq \| B \|
     \Big\| \Big( \int_0^ \infty\!\! \|\sqrt{t}\uD\PB(\tfrac12 t)f\|^2
             \frac{dt}{t}\Big)^{1/2} \Big\|_p
     \Big\| \Big( \int_0^ \infty \!\!\|\sqrt{t}\uD\PB\s(\tfrac12 t)g\|^2
             \frac{dt}{t}\Big)^{1/2} \Big\|_q
  \\  & \lesssim
     \Big\| \Big( \int_0^ \infty\! \|\sqrt{t}\uD\PB(t)f\|^2
             \frac{dt}{t}\Big)^{1/2} \Big\|_p \| g\|_q.
 \end{align*}
This implies that
 \begin{align*}
  \|f\|_p \lesssim  \Big\| \Big( \int_0^ \infty \|\sqrt{t}\uD\PB(t)f\|^2
             \frac{dt}{t}\Big)^{1/2} \Big\|_p.
 \end{align*}
So far we have assumed that $f\in L^2\cap   \ov{\Ran_p(L)}$.
The extension to general $f\in \ov{\Ran_p(L)}$ follows by a density
argument (using the first part of the theorem to see that the right hand side 
can be approximated as well).

\section{The operators $\uD$ and $\uD^*B$}\label{sec:DandDsB}

In this section we study some $L^p$-properties of the
operators $\uD$ and $\uD^*B$ and provide a rigorous
interpretation of the identities $\LB = \uD\s B \uD$ and $\uLB
= \uD \uD\s B$ in $L^p$ and $\ov{\Ran_p(\uD)}$. From these
operators we build operator matrices which will play an
important role in the proofs of Theorems \ref{thm:Kato},
\ref{thm:Hodge-main}, and \ref{thm:R-bisectorial-main}.

Throughout this section we fix $1<p<\infty$. The operator
$\uD^* B$ is closed and densely defined as an operator from
$\uL^p$ to $L^p$ with domain $$\D(\uD^* B) = \{F\in \uL^p: \ B
F \in \D(\uD\s)\}.$$ Moreover, since $\|BF\|_p \eqsim \|F\|_p$
for $F \in \ov{\Ran_p(\uD)}$, $$\uD^*B = (B^*\uD)^*,$$ where
$B^*\uD$ is interpreted as a operator from $L^q$ to
$\underline{L}^q$, $\frac1p+\frac1q=1$.

 \newcommand{\cC}{\mathcal{C}}

For the next result we recall that $\cC := \F C_{\rm b}^\infty(E;\Dom(\AB))$
is a $\PB$-invariant core for $\D(\LB)$. We
set $\cC^* := \F C_{\rm b}^\infty(E; \Dom(\AB^*));$ this is a
$\PB^*$-invariant core for $\D(\LB^*)$.

 \begin{proposition} \label{prop:Lpformula}
In $L^p$ we have $\LB = (\uD^* B) \uD.$ More precisely, $f \in
\D(\LB)$ if and only if $f \in \D(\uD)$ and $\uD f \in \D(\uD^* B),$
in which case we have $\LB f = (\uD^* B) \uD f.$
 \end{proposition}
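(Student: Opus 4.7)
The goal is to establish the two inclusions
$$
\D(\LB) \subseteq \D((\uD\s B)\uD) \quad \text{and} \quad \D((\uD\s B)\uD) \subseteq \D(\LB),
$$
together with the identity $\LB f = (\uD\s B)\uD f$ on the common domain. The plan is to prove the forward inclusion by a core-and-closedness argument, and the reverse inclusion by a duality argument using the adjoint semigroup on $L^q$.

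For the forward inclusion, I will start on the core $\cC$, on which an explicit computation with Lemma \ref{lem:Dster} shows $\LB f = (\uD\s B)\uD f$; this is the computation already invoked in the proof of Lemma \ref{lem:Fcore}. Given $f\in \D(\LB)$, Lemma \ref{lem:Fcore} provides approximants $f_n \in \cC$ with $f_n\to f$ and $\LB f_n \to \LB f$ in $L^p$. Theorem \ref{thm:Rgradientestimates} makes $\uD(I+\LB)^{-1}$ bounded from $L^p$ to $\uL^p$, so $\uD f_n = \uD(I+\LB)^{-1}(I+\LB)f_n \to \uD f$ in $\uL^p$. Since $(\uD\s B)(\uD f_n) = \LB f_n \to \LB f$, the closedness of $\uD\s B$ (which holds thanks to its identification $\uD\s B = (B\s \uD)\s$) yields $\uD f \in \D(\uD\s B)$ and $(\uD\s B)\uD f = \LB f$.

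For the reverse inclusion, given $f\in \D((\uD\s B)\uD)$ and $g=(\uD\s B)\uD f$, I will set $u := (I+\LB)^{-1}(f+g) \in \D(\LB)$. The forward inclusion gives $u\in \D((\uD\s B)\uD)$ and $u + (\uD\s B)\uD u = f+g$, so $k := f-u$ satisfies $k + (\uD\s B)\uD k = 0$. It therefore suffices to prove that $I+(\uD\s B)\uD$ is injective on $\D((\uD\s B)\uD)$.

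This injectivity is the main obstacle: the direct Hilbert-space argument (pair with $k$ and invoke the coercivity of $B$) is only available for $p=2$. I would bypass this by duality, exploiting that (A1)--(A3) also hold with $B$ replaced by $B\s$, so the forward inclusion applies to $\LB\s$ in $L^q$ (where $1/p+1/q=1$): every $\varphi \in \Dq(\LB\s)$ belongs to $\Dq(\uD)$, satisfies $\uD\varphi \in \Dq(\uD\s B\s)$, and obeys $\LB\s \varphi = (\uD\s B\s)\uD\varphi$. Two applications of the $L^p$--$L^q$ duality between $\uD$ and $\uD\s$ then give
$$
\langle (\uD\s B)\uD k, \varphi\rangle = \langle B\uD k, \uD \varphi\rangle = \langle \uD k, B\s \uD \varphi\rangle = \langle k, \LB\s \varphi\rangle,
$$
so that $\langle k, (I+\LB\s)\varphi\rangle = 0$ for every $\varphi \in \Dq(\LB\s)$. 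Since $-\LB\s$ generates a $C_0$-semigroup on $L^q$, the operator $I+\LB\s$ maps $\Dq(\LB\s)$ onto $L^q$, and $k=0$ follows.
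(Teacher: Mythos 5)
Your argument is correct, and it rests on the same pillars as the paper's proof (Lemma \ref{lem:Dster} on the core $\F C_{\rm b}^\infty(E;\Dom(\AB))$, Theorem \ref{thm:Rgradientestimates}, and the identification $\uD\s B=(B\s\uD)\s$ together with duality against the adjoint operator $\LB\s=\uD\s B\s\uD$ on $L^q$), but you organize it differently. For the inclusion $\LB\subseteq(\uD\s B)\uD$ the paper extends the pairing identity $\ip{\LB f,g}=\ip{\uD f,B\s\uD g}$ from the cores to all $f\in\D(\LB)$, $g\in\Dq(\uD)$ and reads off membership in $\D((B\s\uD)\s)$, whereas you approximate $f$ from the core and invoke closedness of $\uD\s B$; these are interchangeable, both being powered by the fact that convergence in $\D(\LB)$ forces convergence of $\uD f_n$ (Theorem \ref{thm:Rgradientestimates}). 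The real divergence is in the reverse inclusion: the paper tests $f$ against the core $\F C_{\rm b}^\infty(E;\Dom(\AB\s))$ of $\Dq(\LB\s)$ and concludes $f\in\D(\LB)$ directly from the adjoint characterization $(\LB\s)\s=\LB$ (reflexivity of $L^p$), while you subtract the resolvent solution $u=(I+\LB)^{-1}(f+g)$ and reduce to injectivity of $I+(\uD\s B)\uD$, proved by pairing with $\Dq(\LB\s)$ and using that $I+\LB\s$ is surjective. Your route costs one extra input, namely the forward inclusion for the adjoint data $(B\s,q)$ — legitimate, since (A1)--(A3) and Theorem \ref{thm:Rgradientestimates} are symmetric under $B\leftrightarrow B\s$, and the paper itself uses that $\PB\s$ is generated by $-\uD\s B\s\uD$ on $L^q$ — but in exchange it avoids any appeal to the bidual identity and to extending the pairing identity over a core; the paper's version is shorter, yours is perhaps more transparent about where injectivity/surjectivity enter.
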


 \bpf
First note that for all $f,g \in \cC$ we have $\ip{\LB f, g} =
\ip{\uD f, B^*\uD g}.$ Since $\cC$ is a core for $\D(\LB)$,
and $\D(\LB)$ is core for $\D(\uD)$ by the first part of
Theorem \ref{thm:Rgradientestimates}, this identity extends to
all $f\in \D(\LB)$ and $g\in\D(\uD)$. This implies that $\uD f
\in \D((B^*\uD)^*)$ and $(B^*\uD)^* \uD f = \LB f.$ Since
$(B^*\uD)^* = \uD^* B,$ we find that $\LB \subseteq
(\uD^*B)\uD.$

To prove the other inclusion we take $f \in \D(\uD)$ such that
$\uD f \in \D(\uD^*B).$ We have $\ip{f, \LB^* g} = \ip{\uD f,
B^*\uD g} = \ip{(\uD^* B) \uD f, g}$ for all $g\in \cC^*,$
where the second identity follows from $\uD f \in \D(\uD^*B) =
\D((B^*\uD)^*)$.  Since $\cC^*$ is a core for $\Dom_q(\LB\s)$
this implies that $f\in \D(\LB)$ and $\LB f = (\uD^*B)\uD f.$
 \epf

We shall be interested in the restriction $\uD^*
B|_{\ov{\Ran_p(\uD)}}$ of $\uD^* B$ to $\ov{\Ran_p(\uD)}$. As
its domain we take $$\D(\uD^* B|_{\ov{\Ran_p(\uD)}}) := \{F\in
\ov{\Ran_p(\uD)}: \ BF\in \D(\uD\s)\}
 = \D(\uD^* B)\cap \ov{\Ran_p(\uD)}.$$
In the middle expression, as before we consider $\uD\s$ as a densely defined operator from
$\uL^p$ to $L^p$.

 \begin{corollary} \label{cor:DsBdenselydef}
The restriction $\uD^* B|_{\ov{\Ran_p(\uD)}}$ is closed
and densely defined.
 \end{corollary}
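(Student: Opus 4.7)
The plan is to deduce both assertions from Proposition \ref{prop:Lpformula} together with the fact that $\D(\LB)$ is a core for $\D(\uD)$ (established in the first part of Theorem \ref{thm:Rgradientestimates}).

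For closedness there is essentially nothing to do. Suppose $F_n \in \D(\uD^*B|_{\ov{\Ran_p(\uD)}})$, $F_n \to F$ in $\uL^p$ and $\uD^*BF_n \to g$ in $L^p$. Since $\uD^*B$ is closed as an operator from $\uL^p$ to $L^p$, it follows that $F \in \D(\uD^*B)$ and $\uD^*BF = g$; since $\ov{\Ran_p(\uD)}$ is a closed subspace of $\uL^p$, we also have $F \in \ov{\Ran_p(\uD)}$, which gives $F \in \D(\uD^*B|_{\ov{\Ran_p(\uD)}})$.

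For density, the key observation is that Proposition \ref{prop:Lpformula} supplies an explicit dense subspace. Namely, for every $f \in \D(\LB)$ one has $\uD f \in \D(\uD^*B)$, and obviously $\uD f \in \Ran_p(\uD) \subseteq \ov{\Ran_p(\uD)}$. Hence
\[
 \uD(\D(\LB)) \subseteq \D(\uD^*B) \cap \ov{\Ran_p(\uD)} = \D(\uD^*B|_{\ov{\Ran_p(\uD)}}).
\]
Now given any $F \in \ov{\Ran_p(\uD)}$ and $\e > 0$, pick $f \in \D(\uD)$ with $\|\uD f - F\|_p < \e/2$. Since $\D(\LB)$ is a core for $\D(\uD)$, we may choose $f_n \in \D(\LB)$ with $f_n \to f$ in $L^p$ and $\uD f_n \to \uD f$ in $\uL^p$; taking $n$ large gives $\|\uD f_n - F\|_p < \e$. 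Thus $\uD(\D(\LB))$ is dense in $\ov{\Ran_p(\uD)}$, and a fortiori so is $\D(\uD^*B|_{\ov{\Ran_p(\uD)}})$.

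No step is really a serious obstacle; the only subtlety is to remember that $\uD^*B$, interpreted as an operator from $\uL^p$ to $L^p$, is indeed closed (which was noted at the beginning of Section \ref{sec:DandDsB} using $\|BF\|_p \eqsim \|F\|_p$ on $\ov{\Ran_p(\uD)}$ together with $\uD^*B = (B^*\uD)^*$), so the closedness argument goes through without having to reprove it for the restricted operator.
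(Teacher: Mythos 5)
Your proof is correct and follows essentially the same route as the paper: density comes from combining Proposition \ref{prop:Lpformula} (which puts $\uD f$ into $\D(\uD^*B|_{\ov{\Ran_p(\uD)}})$ for $f\in\D(\LB)$) with the fact from Theorem \ref{thm:Rgradientestimates} that $\D(\LB)$ is a core for $\D(\uD)$, and closedness is immediate since $\uD^*B$ is closed and $\ov{\Ran_p(\uD)}$ is a closed subspace.
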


 \bpf
Let $f\in \D(\uD).$ By the first part of Theorem
\ref{thm:Rgradientestimates} there exist functions $f_n \in \D(\LB)$
such that $f_n \to f$ in $\D(\uD).$ Proposition
\ref{prop:Lpformula} implies that $\uD f_n \in
\D(\uD^*B|_{\ov{\Ran_p(\uD)}}).$ This shows that $\uD^*
B|_{\ov{\Ran_p(\uD)}}$ is densely defined on
$\ov{\Ran_p(\uD)}.$ Closedness is clear.
 \epf

\begin{proposition}\label{prop:uLpformula2}
The domain $\D(\uLB)$ is a core for $\D(\uD^*B|_{\ov{\Ran_p(\uD)}})$. Moreover, for all $t>0$
the operators $ (I + t^2 \LB)^{-1} \uD^*B|_{\ov{\Ran_p(\uD)}}$ and
$\PB(t) \uD^*B|_{\ov{\Ran_p(\uD)}}$ (initially
defined on $\D(\uD\s B|_{\ov{\Ran_p(\uD)}})$) extend uniquely to  bounded
operators from $\ov{\Ran_p(\uD)}$ to $L^p$, and for all
$F\in \ov{\Ran_p(\uD)}$ we have
 $$ (I + t^2 \LB)^{-1} \uD^*B F= \uD^* B (I + t^2 \uLB)^{-1}F$$
and $$  \PB(t) \uD^*B F = \uD^* B \uPB(t)F.$$
\end{proposition}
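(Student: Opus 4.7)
The plan is to (a) obtain boundedness of the two maps on $\D(\uD\s B|_{\ov{\Ran_p(\uD)}})$ by a duality argument, (b) verify the intertwining identities on the smaller dense subspace $\uD(\D(\LB))$ and extend them using the closedness of $\uD\s B$, and (c) deduce the core statement from the semigroup intertwining.

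For the first stage, given $F\in\D(\uD\s B|_{\ov{\Ran_p(\uD)}})$ and $f\in L^q$ with $\frac1p+\frac1q=1$, I would write
$$\lb\PB(t)\uD\s BF,f\rb = \lb BF,\uD\PB\s(t)f\rb, \qquad \lb (I+t^2\LB)^{-1}\uD\s BF,f\rb = \lb BF,\uD(I+t^2\LB\s)^{-1}f\rb.$$
Since $B\s$ also satisfies (A3), the operator $\LB\s = \uD\s B\s\uD$ on $L^q$ is of the type covered by Theorem \ref{thm:Rgradientestimates}, which then yields $\|\uD\PB\s(t)f\|_q\lesssim t^{-1/2}\|f\|_q$ and $\|\uD(I+t^2\LB\s)^{-1}f\|_q\lesssim t^{-1}\|f\|_q$. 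Taking suprema over unit vectors $f\in L^q$ produces the $L^p$-estimates $\|\PB(t)\uD\s BF\|_p\lesssim t^{-1/2}\|F\|_p$ and $\|(I+t^2\LB)^{-1}\uD\s BF\|_p\lesssim t^{-1}\|F\|_p$. Corollary \ref{cor:DsBdenselydef} supplies the density of $\D(\uD\s B|_{\ov{\Ran_p(\uD)}})$ in $\ov{\Ran_p(\uD)}$, so both maps extend uniquely to bounded operators from $\ov{\Ran_p(\uD)}$ to $L^p$.

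For the intertwining relations I would work first on $\uD(\D(\LB))$, which is dense in $\ov{\Ran_p(\uD)}$ since $\D(\LB)$ is a core for $\D(\uD)$ by Theorem \ref{thm:Rgradientestimates}. For $F=\uD g$ with $g\in\D(\LB)$, Theorem \ref{thm:uLsemigroup} gives $\uPB(t)F = \uD\PB(t)g$ and, after Laplace transform, $(I+t^2\uLB)^{-1}F = \uD(I+t^2\LB)^{-1}g$; combining with Proposition \ref{prop:Lpformula},
$$\PB(t)\uD\s BF = \PB(t)\LB g = \LB\PB(t)g = \uD\s B\uD\PB(t)g = \uD\s B\uPB(t)F,$$
and analogously for the resolvent. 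To extend to arbitrary $F\in\ov{\Ran_p(\uD)}$, approximate by $F_n\in\uD(\D(\LB))$: then $\uPB(t)F_n\to\uPB(t)F$ and $(I+t^2\uLB)^{-1}F_n\to(I+t^2\uLB)^{-1}F$ in $\ov{\Ran_p(\uD)}$, while $\uD\s B\uPB(t)F_n = \PB(t)\uD\s BF_n$ and the analogous resolvent expression converge in $L^p$ to the bounded extensions from the first stage. The closedness of $\uD\s B$ then puts $\uPB(t)F$ and $(I+t^2\uLB)^{-1}F$ in $\D(\uD\s B)$ and delivers the claimed identities on all of $\ov{\Ran_p(\uD)}$.

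Finally, for the core property, given $F\in\D(\uD\s B|_{\ov{\Ran_p(\uD)}})$ I would set $F_n := \uPB(1/n)F$. Analyticity of $\uPB$ forces $F_n\in\D(\uLB^k)$ for every $k\ge 1$, so in particular $F_n\in\D(\uLB)$. By $C_0$-continuity, $F_n\to F$ in $\ov{\Ran_p(\uD)}$; and the semigroup identity from the previous step gives
$$\uD\s BF_n = \uD\s B\uPB(1/n)F = \PB(1/n)\uD\s BF \to \uD\s BF \quad\text{in }L^p,$$
so $F_n\to F$ in the graph norm of $\uD\s B|_{\ov{\Ran_p(\uD)}}$. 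The main obstacle I anticipate is the first stage: its success hinges on the observation that $\LB\s$ on $L^q$ falls within the scope of Theorem \ref{thm:Rgradientestimates}, which requires the full symmetry (A3) $\Leftrightarrow$ (A3) for $B^*$; once that is in place the remaining steps are a standard density-plus-closedness routine.
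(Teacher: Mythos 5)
Your proof is correct and follows essentially the same route as the paper: the identities are first checked on $\uD(\D(\LB))$ via Proposition \ref{prop:Lpformula}, boundedness comes from duality against the adjoint gradient bounds of Theorem \ref{thm:Rgradientestimates} applied to $\LB\s=\uD\s B\s\uD$ on $L^q$, the general case follows from the closedness of $\uD\s B$, and the core statement from regularisation. The only (harmless) deviations are that you treat the semigroup case by the same duality estimate instead of deducing it from the resolvent case by Laplace inversion, and you regularise with $\uPB(1/n)$ rather than with the resolvents $(I+t^2\uLB)^{-1}$ in the core argument.
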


\bpf We split the proof into four steps.

{\em Step 1} - By Proposition \ref{prop:Lpformula}, for all $f \in \D(\LB)$ we have $f \in \D(\uD)$ and $\uD f\in
\D(\uD^* B|_{\ov{\Ran_p(\uD)}})$, and for all $t > 0$
we have $$\PB(t) (\uD^*B) \uD f = \PB(t)\LB f
 = \LB \PB(t) f =(\uD^* B) \uD\PB(t) f= \uD^* B \uPB(t) \uD f.$$
By taking Laplace transforms and  using the closedness of $\uD^*B$,
 this gives $(I +t^2 \uLB)^{-1} \uD f \in \D(\uD^* B |_{\ov{\Ran(\uD)}} )$ and
\beq\label{eq:sec-1}
(I + t^2 \LB)^{-1} (\uD^*B) \uD f= \uD^* B (I + t^2 \uLB)^{-1} \uD f.
\eeq

{\em Step 2} -
By Theorem \ref{thm:Rgradientestimates}, for all $t>0$ the
operator $T(t) :=  B^* \uD (I + t^2 \LB^*)^{-1}$ is bounded
from $L^q$ into $\uL^q,$ $\frac1p+\frac1q=1$.
For all $F\in \D(\uD^* B|_{\ov{\Ran_p(\uD)}})$ and $g \in L^q$ we have
\beq\label{eq:fir-2}
\ip{F,T(t)g}
   = \ip{F, B^* \uD (I + t^2 \LB^*)^{-1}g}
   = \ip{(I + t^2 \LB)^{-1} \uD^* B F,g}.
\eeq
Now let $F \in \ov{\Ran_p(\uD)}$ be arbitrary and take a
sequence $(F_n)_{n\ge 1} \subseteq \D(\uD^* B|_{\ov{\Ran_p(\uD)}})$ converging to $F$ in
$\ov{\Ran_p(\uD)}.$
By Proposition \ref{prop:Lpformula} and the fact that
$\D(L)$ is a core for $\D(\uD)$ we may take the $F_n$ of the form
$\uD f_n$ with $f_n\in \D(L)$.
Then $(I + t^2\uLB)^{-1} F_n \to (I + t^2\uLB)^{-1} F$, and from \eqref{eq:sec-1}
we obtain
$$\uD^*B (I + t^2\uLB)^{-1} F_n = (I + t^2\LB)^{-1} \uD^*B
F_n
  = T^*(t) F_n \to T^*(t) F.$$ The closedness of $\uD^*B$
implies that $(I + t^2\uLB)^{-1} F \in \D(\uD^*B|_{\ov{\Ran_p(\uD)}})$. This proves
 the domain inclusion $\D(\uLB) \subseteq
\D(\uD^*B|_{\ov{\Ran_p(\uD)}})$, along with the identity
$$\uD^*B (I + t^2\uLB)^{-1} F = T^*(t) F, \quad F \in
\ov{\Ran_p(\uD)}.$$ Note that for $F\in \D(\uD\s B)$, from
\eqref{eq:fir-2} we also obtain \beq\label{eq:sec-2} \uD^*B (I
+ t^2\uLB)^{-1} F = T^*(t) F =(I + t^2 \LB)^{-1} \uD^* B
F.\eeq

{\em Step 3} -
By Step 2 the
operator  $\uD^* B (I + t^2 \uLB)^{-1}$ is bounded from $\ov{\Ran_p(\uD)}$ to
$L^p$.
Therefore, by \eqref{eq:sec-1},
the operator $(I + t^2 \LB)^{-1}\uD^*B$ (initially defined on the dense domain
$\D(\uD\s B|_{\ov{\Ran_p(\uD)}})$) uniquely extends to a bounded operator
from $\ov{\Ran_p(\uD)}$ to $L^p$, and for this extension we obtain
the identity
$$ (I + t^2 \LB)^{-1} \uD^*B = \uD^* B (I + t^2 \uLB)^{-1}.$$
On $\D(\uD\s B|_{\ov{\Ran_p(\uD)}})$, the identity  $\uD^* B
\uPB(t)= \PB(t) \uD^*B$ follows from \eqref{eq:sec-2} by real
Laplace inversion (cf. the proof of Lemma
\ref{lem:SVcommute}). The existence of a unique bounded
extension of $\PB(t) \uD^*B$ is proved in the same way as
before.

{\em Step 4} -- It remains to prove that $\D(\uL)$ is a core
for $\D(\uD\s B|_{\ov{\Ran_p(\uD)}})$. Take $F\in \D(\uD\s
B|_{\ov{\Ran_p(\uD)}})$. Then $\lim_{t\to 0} (I+t^2 \uL)^{-1}F
= F$ in $\ov{\Ran_p(\uD)}$ and, by \eqref{eq:sec-2}
$\lim_{t\to 0} \uD\s B(I+t^2 \uLB)^{-1}F = \lim_{t\to 0}
(I+t^2 \LB)^{-1} \uD\s BF =  \uD\s BF$ in $L^p$. This gives
the result. \epf

\begin{proposition}\label{prop:uLpformula1}
For all $F \in\D(\uLB)$ we have $F\in \D(\uD\s B)$,
$\uD^*B F \in \D(\uD)$, and $\uD(\uD^*B)F =
\uLB F.$
 \end{proposition}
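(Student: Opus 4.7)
The plan is to first verify the identity $\uD(\uD^* B) F = \uLB F$ on a convenient core of $\D(\uLB)$, and then extend to all of $\D(\uLB)$ by a closure argument. A natural candidate is the set $U := \{\uD f : f \in \F C_{\rm b}^\infty(E;\Dom(\AB^2))\}$; as in the $L^2$ reasoning from the proof of Theorem \ref{thm:DDsBsemigroup}, one sees that $U \subseteq \D(\uLB)$ is dense in $\ov{\Ran_p(\uD)}$ (combining the remark after Lemma \ref{lem:Fcore} with the fact from Theorem \ref{thm:Rgradientestimates} that $\D(\LB)$ is a core for $\D(\uD)$) and $\uPB$-invariant (by Theorem \ref{thm:uLsemigroup}), hence a core for $\D(\uLB)$.

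For $F = \uD f \in U$, Proposition \ref{prop:Lpformula} gives $\uD^* B F = (\uD^* B)\uD f = \LB f$. A direct computation using Lemma \ref{lem:Dster} (applied coordinatewise) shows that $\LB$ maps $\F C_{\rm b}^\infty(E;\Dom(\AB^2))$ into $\F C_{\rm b}^\infty(E;\Dom(\AB)) \subseteq \D(\uD)$, so in particular $\uD^* B F \in \D(\uD)$. To identify $\uD(\uD^*B)F$ with $\uLB F$, I will apply Theorem \ref{thm:uLsemigroup} to $\LB f \in \D(\uD)$ to obtain $\uD \PB(s)\LB f = \uPB(s)\uD \LB f$, integrate this identity over $s\in(0,t)$, use the closedness of $\uD$ to pull it inside the Bochner integral, divide by $t$, and let $t\downarrow 0$. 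This will yield $\uD f \in \D(\uLB)$ together with $\uLB \uD f = \uD \LB f$, which is precisely $\uD(\uD^*B) F = \uLB F$ on $U$.

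For an arbitrary $F \in \D(\uLB)$, I would pick a sequence $F_n \in U$ with $F_n \to F$ and $\uLB F_n \to \uLB F$ in $\ov{\Ran_p(\uD)}$, so that $(I+\uLB) F_n \to (I+\uLB) F$. By Step 2 of the proof of Proposition \ref{prop:uLpformula2}, the operator $\uD^* B (I + \uLB)^{-1}$ extends boundedly from $\ov{\Ran_p(\uD)}$ into $L^p$, and hence $\uD^* B F_n \to \uD^* B F$ in $L^p$. On the other hand, the identity already established on $U$ gives $\uD(\uD^* B F_n) = \uLB F_n \to \uLB F$. Since $\uD$ is closed, this forces $\uD^* B F \in \D(\uD)$ together with $\uD(\uD^* B F) = \uLB F$, which is the claim.

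The main obstacle is verifying that $U$ really is a core for $\D(\uLB)$ in the $L^p$-setting; once that is in place the remaining steps are routine closure manipulations. The core statement itself is not hard, but it relies on transporting the $L^2$-argument of Theorem \ref{thm:DDsBsemigroup} to $L^p$, which is legitimate precisely because the density of $\F C_{\rm b}^\infty(E;\Dom(\AB^2))$ in $\D(\LB)$ and the density of $\D(\LB)$ in $\D(\uD)$ both persist for every $1<p<\infty$.
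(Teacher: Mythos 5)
Your proposal is sound in outline and shares the paper's overall skeleton: prove $\uD(\uD\s B)F=\uLB F$ on a core of $\D(\uLB)$ and then extend by the closedness of $\uD$, using the boundedness of $\uD\s B(I+t^2\uLB)^{-1}$ from Proposition \ref{prop:uLpformula2}; your final closure step is literally the one in the paper. Where you genuinely differ is the choice of core and how the identity is verified on it. The paper works with $\mathscr{P}=\{\uD(I+\LB)^{-1}g:\ g\in\D(\uD)\}$: membership in $\D(\uLB)$ and the commutation $\uLB\uD f=\uD\LB f$ then come for free from the resolvent identity $\uD(I+\LB)^{-1}g=(I+\uLB)^{-1}\uD g$ (Theorem \ref{thm:uLsemigroup} after a Laplace transform), and $\uD(\uD\s B)\uD f=\uD\LB f$ follows from Proposition \ref{prop:Lpformula}; it then invokes Proposition \ref{prop:uLpformula2} to see that $\mathscr{P}$ is also a core for $\D(\uD\s B|_{\ov{\Ran_p(\uD)}})$. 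Your core $U=\{\uD f:\ f\in\F C_{\rm b}^\infty(E;\Dom(\AB^2))\}$ transplants the $L^2$-argument of Theorem \ref{thm:DDsBsemigroup} to $L^p$; this works (density and $\uPB$-invariance are exactly as you say, via the remark after Lemma \ref{lem:Fcore} and Theorem \ref{thm:Rgradientestimates}), but it costs you the extra semigroup-differentiation argument that the paper's choice of core avoids.

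One intermediate claim is false as stated and needs repair: $\LB$ does \emph{not} map $\F C_{\rm b}^\infty(E;\Dom(\AB^2))$ into $\F C_{\rm b}^\infty(E;\Dom(\AB))$, since for $f=\varphi(\phi_{h_1},\dots,\phi_{h_n})$ one has $\LB f=-\sum_{j,k}[BVh_j,Vh_k]\,\partial_j\partial_k\varphi(\phi_{h_1},\dots,\phi_{h_n})+\sum_j\partial_j\varphi(\phi_{h_1},\dots,\phi_{h_n})\,\phi_{\AB h_j}$, and the unbounded linear factors $\phi_{\AB h_j}$ take it out of the class of bounded cylindrical functions. What your differentiation argument actually needs is only that $\LB f\in\D(\uD)$, and this is true: the double sum lies in $\F C_{\rm b}^\infty(E;\Dom(V))$, while each product $\partial_j\varphi(\phi_{h_1},\dots,\phi_{h_n})\,\phi_{\AB h_j}$ lies in $\D(\uD)$ by truncating $\phi_{\AB h_j}$ and using the closedness of $\uD$, the point being that $\AB h_j\in\Dom(\AB)\subseteq\Dom(V)$ precisely because $h_j\in\Dom(\AB^2)$. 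With that repair your proof goes through; alternatively, switching to the paper's core $\mathscr{P}$ sidesteps the issue entirely.
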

\bpf
Since $\D(\LB)$ is a core for $\D(\uD)$, the set
$\mathscr{P} := \{\uD (I+\LB)^{-1}g: \ g\in \D(\uD)\}$
 is a $\uPB$-invariant dense subspace of
$\ov{\Ran_p(\uD)}$. To see that $\mathscr{P}$ is contained in $\D(\uLB)$,
note that if $g\in \D(\uD)$, then $f:= (I+\LB)^{-1}g\in \D(\LB)$
and
$\uD f = \uD (1+\LB)^{-1}g = (1+\uLB)^{-1}\uD g \in \D(\uLB)$ as claimed.
It follows that $\mathscr{P}$ is a core for
$\D(\uLB)$, and hence a core for $\D(\uD\s B|_{\ov{\Ran_p(\uD)}})$ by Proposition
\ref{prop:uLpformula2}.
Moreover,
$(1+\uLB) \uD f = \uD g = \uD (I+\LB)f$, and therefore
$\uLB \uD f = \uD \LB f$.

For $F\in \mathscr{P}$, say $F =\uD f$ with $f=(I+\LB)^{-1}g$ for some
$g\in \D(\uD)$,
we then have 
$$ \uLB F = \uLB \uD f =  \uD \LB f
= \uD ((\uD\s B) \uD) f =  (\uD (\uD\s B)) \uD f = \uD (\uD\s
B) F.$$ To see that this above identity extends to arbitrary
$F\in \D(\uLB)$, let $F_n\to F$ in $\D(\uLB)$ with all $F_n$
in $\mathcal{P}$. It follows from Proposition
\ref{prop:uLpformula2} that $F_n\to F$ in $\D(\uD\s B)$. In
particular, $\uD\s B F_n\to \uD\s B F$ in $L^p$. Since
$\uD(\uD\s B) F_n = \uLB F_n \to \uLB F$ in
$\ov{\Ran_p(\uD)}$, the closedness of $\uD$ then implies that
$\uD\s B F \in \D(\uD)$ and $ \uD(\uD\s B) F = \uLB F$. \epf

In the remainder of this section we consider $\uD^*B$ as a
closed and densely defined operator from  $\ov{\Ran_p(\uD)}$
to $L^p$ and write $\uD^*B$ instead of using the more precise notation
$\uD^*B|_{\ov{\Ran_p(\uD)}}$.

For the proof of Proposition \ref{prop:rankeridentities}
we need the first part of Theorem
\ref{thm:R-bisectorial-main}. Its proof uses the Hodge-Dirac
formalism, introduced by Axelsson, Keith, and M$^{\rm
c}$Intosh \cite{AKM} in their study of the Kato square root
problem. It was by using this formalism that the main results
of this paper suggested themselves naturally.

On the Hilbertian direct sum $H \oplus \ov{\Ran(V)}$ we
consider the closed and densely defined operator
\beq\label{eq:defT}\TB:= \bma 0 & V\s B \\  V & 0\ema.\eeq By
\cite[Theorem 8.3]{AMN} $\TB$ is bisectorial on $H \oplus
\ov{\Ran(V)}.$

On the direct sum $L^p\oplus \ov{\Ran_p(\uD)}$
we introduce the closed and densely defined
operator $\PiB$ by
$$
\PiB :=  \bma 0 & \uD \s B \\ \uD  & 0 \ema.
$$

\begin{proof}[Proof of Theorem \ref{thm:R-bisectorial-main}, first part]
By Theorems \ref{thm:sector} and \ref{thm:uLsemigroup}, $\LB $
and $\uLB $ are sectorial on $L^p$ and
$\overline{\Ran_p(\uD)}$, respectively. From this it is easy
to see that on $L^p\oplus \ov{\Ran_p(\uD)}$ we have
$i\R\setminus\{0\} \subseteq \rho(\PiB)$ and

$$ (I-it\PiB)^{-1} = \bma (1+t^2 \LB )^{-1} & it (I+t^2 \LB )^{-1}\uD \s {B}\\
  it \uD  (I+t^2 \LB )^{-1} & (I+t^2  \uLB )^{-1}  \ema, \quad
t\in \R\setminus\{0\};
$$
the rigorous interpretation of this identity is provided by
the above propositions. Note that the off-diagonal entries are
well defined and bounded by Theorem
\ref{thm:Rgradientestimates} and Proposition
\ref{prop:uLpformula2}; the proof of the latter result also
shows that $(I+t^2 \LB )^{-1}\uD \s {B}$ is the adjoint of
$B^* \uD (I+t^2 \LB^* )^{-1}$.

We check the $R$-boundedness of the entries of the right-hand side matrix for
$t \in \R \setminus \{0\}.$ For the upper left and the lower
right entry this follows from the $R$-sectoriality of $\LB$
and $\uLB$ on $L^p$ and $\overline{\Ran_p(\uD)}$ respectively.
Theorem \ref{thm:Rgradientestimates} ensures the
$R$-boundedness of the lower left entry, and the
$R$-bounded\-ness of the upper right entry follows from
Proposition \ref{prop:Bconvex} (applied with $B$ and $\LB$ replaced by $B\s$ and
$\LB\s$).
 \end{proof}

As a consequence of the bisectoriality of $\Pi$, the operator
$\PiB^2$ is sectorial. Moreover,
 \begin{align*} 
 \PiB^2 = \bma (\uD^* B) \uD  & 0 \\ 0 & \uD(\uD^*B) \ema
   = \bma \LB  & 0 \\ 0 & \uLB  \ema.
 \end{align*}
To justify the latter identity, we appeal to Propositions
\ref{prop:Lpformula} and \ref{prop:uLpformula1} to obtain the
inclusion $\bma \LB & 0
\\ 0 & \uLB  \ema \subseteq \PiB^2.$ Since both operators
are sectorial of angle $< \frac12\pi$, they are in fact equal.

 \begin{proposition} \label{prop:rankeridentities}
On $L^p$ and $\ov{\Ran_p(\uD)}$ the following identities hold:
  \begin{align*}
 \ov{\Ran_p(\LB)} &=    \ov{\Ran_p(\uD^* B)},&
    \Null_p(\LB)&= \Null_p(\uD),\\
 \ov{\Ran_p(\uLB)}  &=  \ov{\Ran_p(\uD)},&
  \Null_p(\uLB) &= \Null_p(\uD^*B) =      \{0\}.
  \end{align*}
Moreover,
$L^p = \ov{\Ran_p(\uD^* B)} \oplus \Null_p(\uD).$
 \end{proposition}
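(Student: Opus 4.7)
The plan is to derive all six identities at once from the reflexive-space decomposition associated with the Hodge--Dirac operator $\PiB$. The key input is the first part of Theorem \ref{thm:R-bisectorial-main}, proved just above, which gives that $\PiB$ is $R$-bisectorial on the reflexive Banach space $X := L^p \oplus \overline{\Ran_p(\uD)}$. The bisectorial analogue of \eqref{eq:reflexive} therefore supplies the direct sum decomposition
$$
X = \Null(\PiB) \oplus \overline{\Ran(\PiB)}.
$$

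My first step is to describe both summands explicitly via the block form of $\PiB$. The null space reads $\Null(\PiB) = \Null_p(\uD) \oplus \Null_p(\uD\s B)$, with the two summands living in the first and second coordinates of $X$ respectively. For the range, $\Ran(\PiB)$ consists of all pairs $(\uD\s B F, \uD f)$ with $f$ and $F$ varying independently, so it coincides with $\Ran_p(\uD\s B) \oplus \Ran_p(\uD)$; taking closures in the product topology gives $\overline{\Ran(\PiB)} = \overline{\Ran_p(\uD\s B)} \oplus \overline{\Ran_p(\uD)}$. Because both $\Null(\PiB)$ and $\overline{\Ran(\PiB)}$ are Cartesian products of subspaces of the two factors of $X$, the existence and uniqueness of the decomposition of $X$ force analogous decompositions in each factor separately. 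This yields
$$
L^p = \Null_p(\uD) \oplus \overline{\Ran_p(\uD\s B)},
\qquad
\overline{\Ran_p(\uD)} = \Null_p(\uD\s B) \oplus \overline{\Ran_p(\uD)};
$$
the first identity is the last assertion of the proposition.

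The main obstacle is interpreting the second identity: since $\Null_p(\uD\s B) \subseteq \overline{\Ran_p(\uD)}$, the direct-sum property forces $\Null_p(\uD\s B) = \{0\}$, which is the required injectivity of $\uD\s B$ on $\overline{\Ran_p(\uD)}$. I expect this to be the hardest step to justify directly: a Hilbert-space proof in $L^2$ via coercivity of $B$ on $\overline{\Ran(V)}$ does not obviously adapt to $L^p$ when $p\neq 2$, so the bisectorial route is genuinely essential here.

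The remaining four identities then follow routinely from what is already in hand. Using the factorisations $\LB = (\uD\s B)\uD$ (Proposition \ref{prop:Lpformula}) and $\uLB = \uD(\uD\s B)$ (Proposition \ref{prop:uLpformula1}): $\LB f = 0$ forces $\uD f \in \Null_p(\uD\s B) = \{0\}$, so $\Null_p(\LB) = \Null_p(\uD)$; and $\uLB F = 0$ forces $\uD\s B F \in \Null_p(\uD) \cap \overline{\Ran_p(\uD\s B)} = \{0\}$, hence $\Null_p(\uLB) = \Null_p(\uD\s B) = \{0\}$. The factorisations also yield the inclusions $\overline{\Ran_p(\LB)} \subseteq \overline{\Ran_p(\uD\s B)}$ and $\overline{\Ran_p(\uLB)} \subseteq \overline{\Ran_p(\uD)}$, each of which upgrades to equality by comparing the reflexive decompositions $L^p = \Null_p(\LB) \oplus \overline{\Ran_p(\LB)}$ and $\overline{\Ran_p(\uD)} = \Null_p(\uLB) \oplus \overline{\Ran_p(\uLB)}$ with the ones already established.
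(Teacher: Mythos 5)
Your proof is correct, and it rests on the same central input as the paper's: the $R$-bisectoriality of the Hodge--Dirac operator $\PiB$ on $L^p\oplus\overline{\Ran_p(\uD)}$ (first part of Theorem \ref{thm:R-bisectorial-main}) and the resulting splitting $L^p\oplus\overline{\Ran_p(\uD)}=\Null_p(\PiB)\oplus\overline{\Ran_p(\PiB)}$, read coordinate-wise. Where you differ is in how the four identities for $\LB$ and $\uLB$ are extracted. The paper additionally invokes the general facts $\Null_p(\PiB^2)=\Null_p(\PiB)$ and $\overline{\Ran_p(\PiB^2)}=\overline{\Ran_p(\PiB)}$ together with $\PiB^2=\mathrm{diag}(\LB,\uLB)$, which yields all four identities at once by comparing coordinates; the injectivity of $\uD\s B$ and the final Hodge-type identity then come from the splitting, exactly as in your first step. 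You use only the splitting of $\PiB$, deducing first $L^p=\Null_p(\uD)\oplus\overline{\Ran_p(\uD\s B)}$ and $\Null_p(\uD\s B)=\{0\}$, and then recover the remaining identities from the factorisations $\LB=(\uD\s B)\uD$ and $\uLB=\uD(\uD\s B)$ (Propositions \ref{prop:Lpformula} and \ref{prop:uLpformula1}) combined with the separate reflexive decompositions $L^p=\Null_p(\LB)\oplus\overline{\Ran_p(\LB)}$ and $\overline{\Ran_p(\uD)}=\Null_p(\uLB)\oplus\overline{\Ran_p(\uLB)}$, plus the elementary observation that $N\oplus R_1=N\oplus R_2$ with $R_1\subseteq R_2$ forces $R_1=R_2$. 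This trades the kernel/range comparison between $\PiB$ and $\PiB^2$ for two extra sectorial splittings and a small algebraic lemma; both routes are equally rigorous and of comparable length, and your version has the minor merit of making explicit exactly where Propositions \ref{prop:Lpformula} and \ref{prop:uLpformula1} enter.
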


We recall that $\uD^* B$ is interpreted as a densely defined
closed operator from $\ov{\Ran_p(\uD)}$ to $L^p$. In the final
section we will show that under the assumptions of Theorem
\ref{thm:Kato} we have $\ov{\Ran_p(\uD^* B)} =
\ov{\Ran_p(\uD^*)}$ and that in this situation the space
$\ov{\Ran_p(\uD^* B)}$ does not change if we consider $\uD\s
B$ as an unbounded operator from $\uL^p$ to $L^p$.

 \bpf
The bisectoriality of $\PiB$ on $L^p\oplus \ov{\Ran_p(\uD)}$
implies that$$\ov{\Ran_p(\PiB^2)} = \ov{\Ran_p(\PiB)} \quad
\text{ and }\quad \Null_p(\PiB^2)= \Null_p(\PiB).$$ The result
follows from this by considering both coordinates separately.
The fact that $\Null_p(\uD^* B) = \{0\}$ follows from the
bisectorial decomposition $L^p \oplus \ov{\Ran_p(\uD)} =
\ov{\Ran_p(\PiB)} \oplus \Null_p(\PiB)$ and considering the
second coordinate. The final identity follows by inspecting the first coordinate
of the same decomposition.
 \epf

\section{Proof of Theorem \ref{thm:Kato}}\label{sec:Kato}

The main effort in this section is directed towards proving the following comprehensive version
of Theorem \ref{thm:Kato}.

\begin{theorem}\label{thm:Kato-extended}
Assume {\rm(A1)}, {\rm (A2)}, {\rm (A3)}, and let
$1<p<\infty$.

 \begin{enumerate}[\rm(a)]
 \item The following assertions are equivalent: \ben[(1)]
\item[\rm(a1)] $\D(\sqrt{\LB}) \subseteq \D(\uD)$
    with $\n \uD f\n_{p}\lesssim \n \sqrt{\LB}f\n_{p}$;
\item[\rm(a2)] $\uLB$ satisfies a  square function estimate on
$\overline{\Ran_p(\uD)}$:
$$  \|F\|_p  \lesssim  \Big\|  \Big( \int_0^\infty \n t \uLB\, \uPB(t) F\n^2 \,\frac{dt}{t}
  \Big)^{1/2} \Big\|_p;  $$
\item[\rm(a3)] $\Dom(\sqrt{\AB}) \subseteq \Dom(V)$
  with $\n Vh\n \lesssim \n \sqrt{\AB}h\n$;
\item[\rm(a4)] $\uAB$ satisfies a square function estimate on $\overline{\Ran(V)}$:
$$
 \|u\| \lesssim  \Big( \int_0^\infty \| t \uAB\, \uSB(t) u\|^2 \,\frac{dt}{t}
  \Big)^{1/2}.
$$

\een
\vskip3pt

\item[\rm(b)]
The same result holds with `$\lesssim$' and `$\subseteq$'
replaced by `$\gtrsim$' and `$\supseteq$'. \vskip3pt

\item[\rm(c)]
The following assertions are equivalent:
\ben
\item[\rm(c1)] $\D(\sqrt{\LB}) = \D(\uD)$
  with $\n \uD f\n_{p}\eqsim \n \sqrt{\LB}f\n_{p}$;
\item[\rm(c2)] $\uLB$ admits a bounded $H^\infty$-functional calculus on
$\overline{\Ran_p(\uD)}$;
\item[\rm(c3)] $\Dom(\sqrt{\AB}) = \Dom(V)$
  with $\n Vh\n \eqsim \n \sqrt{\AB}h\n$;
\item[\rm(c4)] $\uAB$ admits a bounded $H^\infty$-functional calculus on
$\overline{\Ran(V)}$.
\een
\vskip3pt
  \end{enumerate}
\end{theorem}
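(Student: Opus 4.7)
The plan is to close a cycle of implications (a4)~$\Rightarrow$~(a2)~$\Rightarrow$~(a1)~$\Rightarrow$~(a3)~$\Rightarrow$~(a4) for part~(a), run the analogous cycle for part~(b), and paste the two together to obtain~(c). Three structural ingredients will carry the argument: the tensor representation $\uPB = (\PB\otimes\uSB)|_{\overline{\Ran_p(\uD)}}$ (Definition~\ref{def:uPB}); the intertwinings $\uPB\uD = \uD\PB$ and $\uLB\uD = \uD\LB$ (Theorem~\ref{thm:uLsemigroup} and Proposition~\ref{prop:uLpformula1}); and the $R$-bounded gradient estimate $\{\sqrt{t}\,\uD\PB(t): t>0\}$ from Theorem~\ref{thm:Rgradientestimates}.

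The step (a4)~$\Rightarrow$~(a2) is a direct application of Proposition~\ref{prop:tensorcalculus}(2): on the Hilbert side, (a4) is the required lower square function estimate for $\uAB$ on $\overline{\Ran(V)}$, and on the $L^p$ side the bounded $H^\infty$-calculus of $\LB$ (Lemma~\ref{lem:LBcalc}) supplies the matching lower square function estimate via Proposition~\ref{prop:Hinfty-sfe}; the conclusion -- a lower square function estimate for the tensor-product generator restricted to $\overline{\Ran_p(\uD)}$ -- is precisely~(a2). For (a1)~$\Rightarrow$~(a3), restrict the domain inclusion to first Wiener-It\^o chaos functions $f = \phi_h$: by Lemma~\ref{lem:part} together with the second-quantisation diagonalisation we have $\sqrt{\LB}\phi_h = \phi_{\sqrt{\AB}h}$, while $\uD\phi_h = \mathbf{1}\otimes Vh$; the equivalence of $L^p$-norms on the first chaos (Section~\ref{sec:examples}) converts~(a1) into $\|Vh\|\lesssim \|\sqrt{\AB}h\|$, which is~(a3). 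The Hilbert-space equivalence (a3)~$\Leftrightarrow$~(a4) closes the cycle; it can be established via the bisectorial Hodge-Dirac operator $\TB$ of~\eqref{eq:defT} on $H\oplus\overline{\Ran(V)}$, whose square equals $\AB\oplus \uAB$, combined with a McIntosh-type argument relating one-sided square function bounds for $\uAB$ to the one-sided Riesz-transform bound $\|Vh\|\lesssim \|\sqrt{\AB}h\|$.

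The technical heart is (a2)~$\Rightarrow$~(a1). Applying (a2) with $F=\uD f$ for $f$ in a core of $\D(\LB)$, and substituting the two intertwining identities, produces
\begin{equation*}
\|\uD f\|_p \lesssim \Big\| \Big( \int_0^\infty \|\sqrt{t}\,\uD\PB(t/2)\cdot \sqrt{t}\,\LB\PB(t/2)f\|^2\,\frac{dt}{t} \Big)^{1/2} \Big\|_p.
\end{equation*}
The Kalton-Weis multiplier theorem (Proposition~\ref{prop:KW}), fed with the $R$-bounded family $\{\sqrt{t}\,\uD\PB(t/2) : t>0\}$ from Theorem~\ref{thm:Rgradientestimates}, pulls this factor out of the Gaussian integral, leaving the Gaussian norm of $t\mapsto \sqrt{t}\,\LB\PB(t/2)f = \psi(t\LB)\sqrt{\LB}f$ with $\psi(z)=\sqrt{z}\,e^{-z/2}$. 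Proposition~\ref{prop:Hinfty-sfe} applied to the bounded $H^\infty$-calculus of $\LB$ then identifies this norm with $\|\sqrt{\LB}f\|_p$ up to constants, yielding~(a1).

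Part~(b) will be obtained by running the same cycle with all inequalities and domain inclusions reversed, leaning on Proposition~\ref{prop:Bconvex} to pass between $B$ and $B^*$ by duality when needed; hypothesis~(A3) is symmetric under $B\leftrightarrow B^*$, so this passage is clean. Part~(c) is then pieced together from~(a) and~(b) via Proposition~\ref{prop:Hinfty-sfe} on $L^p$ and McIntosh's theorem on the Hilbert side, each of which identifies two-sided square function estimates with bounded $H^\infty$-functional calculi. The principal obstacle is the Kalton-Weis step in (a2)~$\Rightarrow$~(a1): it depends in an essential way on the $R$-bounded gradient estimate of Theorem~\ref{thm:Rgradientestimates}, whose proof itself required separate treatments in the cases $1<p\le 2$ and $p>2$ (Section~\ref{sec:gradient}). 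A secondary subtlety is making (a3)~$\Leftrightarrow$~(a4) work in the non-self-adjoint regime, where the familiar symmetric-form manipulations break down and must be replaced by the bisectorial Hodge-Dirac machinery on $H\oplus\overline{\Ran(V)}$.
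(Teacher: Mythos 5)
Part (a) of your proposal is sound and, at the key step, genuinely different from the paper. Your cycle (a4)$\Rightarrow$(a2)$\Rightarrow$(a1)$\Rightarrow$(a3)$\Rightarrow$(a4) matches the paper's architecture (Proposition \ref{prop:tensorcalculus}, first-chaos restriction, Lemma \ref{lem:HilbertspaceKato}), but for (a2)$\Rightarrow$(a1) the paper passes to the semigroup generated by $-\sqrt{\uLB}$, uses the subordination inequality $\mathcal{G}f\le\HH f$ of Lemma \ref{lem:HdominatesG}, and closes with the Littlewood--Paley--Stein upper bound of Theorem \ref{thm:square_functions}; you instead factor $t\uLB\,\uPB(t)\uD f=\bigl(\sqrt{t}\,\uD\PB(t/2)\bigr)\bigl(\sqrt{t}\,\LB\PB(t/2)f\bigr)$ and apply the Kalton--Weis multiplier theorem (Proposition \ref{prop:KW}) with the $R$-bounded gradient family, then Proposition \ref{prop:Hinfty-sfe} for $\LB$. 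Both routes rest on Theorem \ref{thm:Rgradientestimates} and the $H^\infty$-calculus of $\LB$, and yours is a legitimate alternative (it is essentially the mechanism inside the proof of Proposition \ref{prop:tensorcalculus}). Two small points you should still record: in (a4)$\Rightarrow$(a2) the kernel term in Proposition \ref{prop:tensorcalculus}(2) disappears only because $\Null_p(\uLB)=\{0\}$ on $\overline{\Ran_p(\uD)}$ (Proposition \ref{prop:rankeridentities}), and at the end of (a2)$\Rightarrow$(a1) you must pass from $f\in\D(\LB)$ to $f\in\D(\sqrt{\LB})$ using that $\D(\LB)$ is a core together with the closedness of $\uD$.

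The genuine gap is part (b). The assertion that it follows by ``running the same cycle with all inequalities reversed'' fails at the one step that matters, (b2)$\Rightarrow$(b1) (equivalently (b4)$\Rightarrow$(b1)), i.e.\ the reverse estimate $\n\sqrt{\LB}f\n_p\lesssim\n\uD f\n_p$. Your multiplier argument does not reverse: the natural factorisation $\sqrt{t}\,\LB\PB(t)f=\bigl(\sqrt{t}\,\uD\s B\,\uPB(t/2)\bigr)\uPB(t/2)\uD f$ has an inner factor $\uPB(t/2)\uD f$ which is not an $H_0^\infty$-square function of $\uD f$ (the integral $\int_0^\infty\n\uPB(t)\uD f\n^2\,dt/t$ diverges), while the alternative factorisation through $\uD\s B\,\uLB^{-1/2}$ presupposes exactly the reverse Riesz bound you are trying to prove, so it is circular; and Proposition \ref{prop:Bconvex} only dualises $R$-bounded families, it does not produce the estimate. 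The paper needs a separate idea here: by Lemma \ref{lem:HilbertspaceKato}, (b4) for $B$ is equivalent to the (a)-type hypotheses for $B\s$, so part (a) applied to $B\s$ in $L^q$ gives $\n\uD f\n_q\lesssim\n\sqrt{\LB\s}f\n_q$; then a sup-duality argument, based on $\lb \LB g,f\rb=\lb B\uD g,\uD f\rb$ and the decomposition $L^q=\Null_q(\sqrt{\LB\s})\oplus\overline{\Ran_q(\sqrt{\LB\s})}$, yields $\n\sqrt{\LB}g\n_p\lesssim\n\uD g\n_p$ on the core $\F C_{\rm b}^\infty(E;\Dom(\AB))$. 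Without some such duality step your part (b) is unproved, and consequently part (c), which you assemble from (a) and (b) via Proposition \ref{prop:Hinfty-sfe}, is not established either.
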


The plan of the proof is as follows. First we consider $({\rm
a})$. The equivalence of (a3) and (a4) will be proved in Lemma
\ref{lem:HilbertspaceKato}, while the implications (a1)
$\Rightarrow $ (a3) and  (a2) $\Rightarrow$ (a4) follow by
considering functions of the form $f=\phi_h$ and $F=\one \ot
u$  respectively, and using the equivalence of $L^p$-norms on
the first Wiener-It\^o chaos. In Proposition
\ref{prop:rankeridentities} we have shown that $\uLB$ is
injective on $\ov{\Ran_p(D)}$, and then Proposition
\ref{prop:tensorcalculus} asserts that (a4) implies (a2), so
that it remains to show that (a2) implies (a1).

Next we turn to part (b). The equivalence of (b3) and
(b4) follows from Lemma \ref{lem:HilbertspaceKato}, and  the
implications (b1) $\Rightarrow$ (b3) and (b2) $\Rightarrow$ (b4) follow
as in part (a). Proposition \ref{prop:tensorcalculus}
asserts that (b4) implies (b2), so that it suffices to show that
(b4) implies (b1).

Finally, part (c) follows by putting together the estimates
obtained in (a) and (b) and appealing to Proposition
\ref{prop:Hinfty-sfe}.

\medskip

The next lemma is a variation of Theorem 10.1 in \cite{AMN}.
We are grateful to Alan M$^{\rm c}$Intosh for showing us the
argument below. Keeping in mind
 that
$B$ satisfies (A3) if and only if $B^*$ satisfies (A3), we
write $\uA_{*} := VV^*B^*$ and we denote the semigroup
generated by $-\uA_{*}$ by $\uS_{*}.$

\blem \label{lem:HilbertspaceKato} Assume {\rm (A2)} and {\rm
(A3)}. For $h\in \Dom(A)$ we have
 \begin{align} \label{eq:always}
 \Big( \int_0^\infty \| t \uAB\, \uSB(t) Vh\|^2
\,\frac{dt}{t}
  \Big)^{1/2} \eqsim \| \sqrt{\AB} h \|.
 \end{align}
As a first consequence, the following assertions are
equivalent: \ben
\item[\rm(1)] $\Dom(\sqrt{\AB}) \subseteq \Dom(V)$ with
 $\|\sqrt{\AB}h \| \gtrsim \|Vh\|$,\  $h \in \Dom(\sqrt{\AB})$;
\item[\rm(2)] $\uAB$ satisfies a  square function estimate on
$\ov{\Ran(V)}$:
  $$  \Big( \int_0^\infty \| t \uAB\, \uSB(t) u\|^2 \,\frac{dt}{t}
  \Big)^{1/2}   \gtrsim \|u\|;$$
\item[\rm(3)]  $\Dom(\sqrt{\AB^*}) \supseteq \Dom(V)$ with
 $\|\sqrt{\AB^*} h \| \lesssim \|Vh\|$, \ $h \in \Dom(V)$;
\item[\rm(4)] $\uA_{*}$ satisfies a square function estimate on
$\ov{\Ran(V)}$:
 $$\Big( \int_0^\infty \| t \uA_{*} \uS_{*}(t) u\|^2 \,\frac{dt}{t}
  \Big)^{1/2} \lesssim \|u\|.$$
\een As a second consequence, the following assertions are
equivalent:

\ben
\item[\rm(1$'$)] $\Dom(\sqrt{\AB}) = \Dom(V)$ with
equivalence of norms
 $\|\sqrt{\AB} h \| \eqsim \|Vh\|$;
\item[\rm(2$'$)]
$\uAB$ admits a bounded $H^\infty$-functional calculus on $\ov{\Ran(V)}$;
\item[\rm(3$'$)]  $\Dom(\sqrt{\AB\s}) = \Dom(V)$ with
 $\|\sqrt{\AB\s} h \| \eqsim \|Vh\|$;
\item[\rm(4$'$)]  $\uAB_*$ admits a bounded $H^\infty$-functional calculus on
$\ov{\Ran(V)}$.
\een
 \elem

 \bpf

To prove \eqref{eq:always}, let  $\om \in (\om(T),\frac12\pi),$ where
$\TB$ is defined by \eqref{eq:defT}. By \cite[Proposition 8.1]{AMN}
we have for all $\psi \in H_0^\infty(\S_\om)$ and $\tilde\psi \in
H_0^\infty(\S_{2\om}^+),$
 \begin{align*} 
  \int_0^\infty \| \psi(tT) u \|^2\, \frac{dt}{t}
   \eqsim
  \int_0^\infty \| \tilde\psi(tT^2) u \|^2\, \frac{dt}{t},
   \quad u \in \ov{\Ran(T)}.
 \end{align*}
Using this, the fact that $A$ has a bounded
$H^\infty$-calculus, and the fact that $\varphi := \sgn \cdot
\psi \in H_0^\infty(\S_\om),$ for $h \in \Dom(A)$ we obtain
 \begin{align*}
  \|  \sqrt{A} h\|^2
  &  \eqsim
  \int_0^\infty \| \tilde\psi(tA) \sqrt{A} h  \|^2 \frac{dt}{t}
  \\& =
  \int_0^\infty \Big\| \tilde\psi(tT^2) \sqrt{T^2}
            \bma h  \\  0\ema  \Big\|^2 \frac{dt}{t}
  \\& \eqsim
  \int_0^\infty \Big\| \psi(tT) \sqrt{T^2}
            \bma h  \\  0\ema  \Big\|^2 \frac{dt}{t}
  \\& =
  \int_0^\infty \Big\| \varphi(tT) T
            \bma h  \\  0\ema  \Big\|^2 \frac{dt}{t}
   \\& =
  \int_0^\infty \Big\| \varphi(tT)
            \bma 0  \\  Vh \ema  \Big\|^2 \frac{dt}{t}
  \\& \eqsim
  \int_0^\infty \Big\| \tilde\psi(tT^2)
            \bma 0  \\  Vh \ema  \Big\|^2 \frac{dt}{t}
  \\& \eqsim
  \int_0^\infty \| \tilde\psi(t\uA) Vh  \|^2 \frac{dt}{t}.
  \end{align*}
The equivalence of (1) and (2) follows immediately by taking $
\tilde\psi(z) = ze^{-z}.$ Replacing $B$ by $B^*$ we obtain the
equivalence of (3) and (4). Finally, the equivalence of $(1)$
and $(3)$ is a well-known consequence of the duality theory of
forms \cite{Ka,Li} (see also \cite[Theorem 10.1]{AMN}).

The equivalence of the primed statements follows in the same
way (or can alternatively be deduced from the equivalence of
the un-primed statements).
 \epf

\begin{proof}[Proof of Theorem \ref{thm:Kato-extended}]
Fix $1< p<\infty$ and let $\frac{1}{p}+\frac{1}{q} = 1$.

{\em {Part {\rm (a)}}}: It remains to prove that (a2) implies
(a1). Perhaps the shortest proof of this implication is based
on a lower bound for the square function associated with the
semigroup $\uQB$ generated by $-\sqrt{\uLB}$. Alternatively,
one could adapt the argument in Lemma
\ref{lem:HilbertspaceKato} to the $L^p$-setting.

Consider the functions $\varphi(z) = ze^{-z}$ and $\psi(z) =
\sqrt{z}e^{-\sqrt{z}}.$ These functions belong to
$H_0^\infty(\S_\th^+)$ for $\th < \frac12 \pi$. Substituting $t=s^2$ we obtain, from Proposition \ref{prop:Hinfty-sfe},
$$ \Big\|  \Big( \int_0^\infty \| \psi(t\uLB) F\|^2 \,\frac{dt}{t}
  \Big)^{1/2} \Big\|_p =
  \sqrt{2}\Big\|  \Big( \int_0^\infty \| s\sqrt{\uLB} \uQB(s) F\|^2 \,\frac{ds}{s}
  \Big)^{1/2} \Big\|_p.$$
Using (a2) and the first part of Proposition
\ref{prop:Hinfty-sfe}, the identity of Theorem
\ref{thm:uLsemigroup} (which extends to the semigroup $\QB$
generated by $-\sqrt{\LB}$), and Lemma \ref{lem:HdominatesG}
and Theorem \ref{thm:square_functions}, for all $f\in \D(\LB)$
we obtain
$$
\bal
  \| \uD f \|_p
 &  \lesssim  \Big\|  \Big( \int_0^\infty \| t{\uLB}\, \uPB(t) \uD f\|^2 \,\frac{dt}{t}
  \Big)^{1/2} \Big\|_p
 \\ & \eqsim  \Big\|  \Big( \int_0^\infty \| s\sqrt{\uLB} \uQB(s) \uD f\|^2
 \,\frac{ds}{s}
  \Big)^{1/2} \Big\|_p
 \\&  = \| \mathcal{G} (\sqrt{\LB}f) \|_p
 \\ &  \leq  \| \HH (\sqrt{\LB}f) \|_p
 \\ & \lesssim  \|  \sqrt{\LB}f \|_p.
 \eal
$$
Since $\D(\LB)$ is a core for both $\D(\sqrt{\LB})$ and
$\D(\uD)$, the desired domain inclusion follows and the norm
estimate holds for all $f \in \D(\sqrt{\LB}).$

\medskip
{\em Part {\rm (b)}}: It remains to show that (b4) implies (b1).

By Lemma \ref{lem:HilbertspaceKato} (applied with the roles of $B$ and $B^*$
reversed),
(b4) implies the estimate
$$  \Big( \int_0^\infty \| t \uA_{B\s} \uS_{B\s}(t) u\|^2 \,\frac{dt}{t}
  \Big)^{1/2}   \gtrsim \|u\|, \quad u\in \ov{\Ran(V)}.$$
It follows from Part $({\rm a})$ (with $B$ replaced by $B^*$) that
$\Dom_q(\sqrt{\LB\s})\subseteq \Dom_q(\uD)$ and, for $f \in
\Dom_q(\sqrt{\LB^*})$,
$$ \|\uD f \|_q \lesssim \| \sqrt{\LB^*} f\|_q, \quad 1<q<\infty.$$

We will use next a standard duality argument to prove the estimate
$\| \sqrt{\LB} g\|_p \lesssim \| \uD g  \|_p$ where $\frac1p+\frac1q
=1.$ For $g \in \F C_{\rm b}^\infty(E;\Dom(\AB))$ we have
  \begin{align*}
  \|\sqrt{\LB} g\|_p
  & =   \sup_{\|\widetilde f\|_q \leq 1} | \lb  \sqrt{\LB} g,
       \widetilde f \rb|.
  \end{align*}
The sectoriality of $\sqrt{\LB^*}$ allows us to use the decomposition
 $\widetilde f = \widetilde f_0 + \widetilde
 f_1 \in \Null(\sqrt{\LB^*}) \oplus
    \ov{\Ran(\sqrt{\LB^*})} = L^q,$
    and since $\F C_{\rm b}^\infty(E;\Dom(\AB\s))$ is a core for
$\Dom_q(\sqrt{\LB^*}),$ it suffices to consider $\widetilde f$ of the
form $\widetilde f = \widetilde f_0 + \sqrt{\LB^*} f,$ with $f\in \F
C_{\rm b}^\infty(E;\Dom(\AB\s))$ and
  $\|\widetilde f\|_q \leq 1.$
Since $\widetilde f_0 \in \Null(\sqrt{\LB^*})$ and $\| \sqrt{\LB^*} f
\|_p \leq \|P_{\ov{\Ran(\sqrt{\LB^*})}}\|_p \|\widetilde f \|_p,$ we
obtain
\begin{align*}
  \|\sqrt{\LB} g\|_p
  & =   \sup_{\|\widetilde f_0 + \sqrt{\LB^*} f\|_q \leq 1}
    | \lb  \sqrt{\LB} g,  \widetilde f_0 + \sqrt{\LB^*} f \rb|
 \\ & \lesssim    \sup_{\| \sqrt{\LB^*} f\|_q \leq 1}
    | \lb  \sqrt{\LB} g, \sqrt{\LB^*} f \rb|
 \\ & =      \sup_{\|\sqrt{\LB^*} f\|_q\leq 1} | \lb  \LB g, f\rb|
 \\ & \lesssim \sup_{\|\uD f\|_q \leq 1} | \lb \LB g, f\rb|
 \\ & =    \sup_{\|\uD f\|_q \leq 1} | \lb  B\uD g,  \uD f\rb|
 \\ & \leq  \sup_{\|\uD f\|_q \leq 1} \| B \| \; \|\uD g\|_p \|\uD f  \|_q
 \\ & =   \| B \|\, \|\uD g  \|_p .
 \end{align*}
Since $\F C_{\rm b}^\infty(E;\Dom(\AB))$ is a core for $\D(\LB)$, the
result of Step 2 follows.

{\em Part {\rm (c)}}: The equivalences follow immediately from (a)
and (b) combined with Proposition \ref{prop:Hinfty-sfe}.
 \epf

We finish this section by pointing out two further equivalences to
the ones of Theorem \ref{thm:Kato} and their one-sided extensions in Theorem
\ref{thm:Kato-extended}.

The conditions (1)-(4) of Theorem \ref{thm:Kato} are
equivalent to
\ben
\item[\rm(5)] $\D(\sqrt{\uLB}) = \D(\uD^* B)$ with
$ \|\sqrt{\uLB} F\|_p \eqsim \|\uD^* B F\|_p$ for $F\in \D(\sqrt{\uLB})$;
\item[\rm(6)] $\Dom(\sqrt{\uAB}) = \Dom(V^* B)$ with
$ \|\sqrt{\uAB}u\|_p \eqsim \|V^* B u\|_p$ for $u\in \Dom(\sqrt{\uAB})$.
\een
Here, in the spirit of Theorem \ref{thm:Kato}, we interpret $\uAB$ as an
operator in $\ov{\Ran(V)}$. This is immaterial, however, in view of the
definition $\uAB = V V\s B$ and the (not
necessarily orthogonal) Hodge
decomposition $\H = \ov{\Ran(V)} \oplus \Null(V^* B)$ (see \eqref{eq:HodgeH})
by virtue of which (6) also holds on the full space $\H$.

To see that (1) implies (5), note that
for $f \in \D(\LB)$ we have
$$\|(\uD^* B) \uD f\|_p
  = \| \LB f\|_p
  \eqsim \| \uD \sqrt{\LB} f \|_p
  = \| \sqrt{\uLB}\uD f \|_p.$$
Since $\uD( \D(\LB))$ is a core for both $\D(\uD^* B)$ and
$\D(\sqrt{\uLB}),$ (5) follows. The converse implication that (5) implies (1)
is proved similarly. The equivalence (3)$\Leftrightarrow$(6) is proved in the
same way. It is clear from the proofs that the one-sided versions of these
implications hold as well.

 \section{Proof of Theorem \ref{thm:DL}}\label{sec:DL}

We continue with the proof of Theorem \ref{thm:DL}.
It will be a standing assumption that the equivalent conditions
of Theorem \ref{thm:Kato} are satisfied. As we have already observed (in Lemma
\ref{lem:HilbertspaceKato}, see also the
discussion below Theorem \ref{thm:Kato}),
the corresponding equivalences obtained by replacing $B$ with $B\s$ then also
hold.

Below, for $k=1,2$ we will use the bounded analytic $C_0$-semigroups
$$
 \uPB_{(k)}(t) := \PB(t) \ot \uSB^{\ot k}(t),
$$
which are defined on the spaces
$$
 \uL_{(k)}^p := L^p(E,\mu;\H^{\ot k}).
$$
Note that $\uL_{(1)}^p = \uL^p$ and $\uPB_{(1)}$ coincides with
$\uPB$ on the closed subspace $\ov{\Ran_p(\uD)}$. The generators of
$\uPB_{(k)}$ will be denoted by $-\uLB_{(k)}$. The semigroups
generated by $-\sqrt{I+\uLB_{(k)}}$ will be denoted by
$\uQB_{(k)}.$

We also consider the operator $\uD \ot I,$ initially defined on the
algebraic tensor product $\D(\uD) \ot \H$, which is viewed as a dense subspace
of $\uL_{(1)}^p$. Using that $\uD$ is a
closed operator from $L^p$ into $\uL^p,$ it is straightforward to
check that $\uD \ot I$ extends to a closed operator
\begin{align*}
  \uuD : \D(\uuD) \subseteq \uL_{(1)}^p \to \uL_{(2)}^p.
\end{align*}

On the algebraic tensor product $L^p \ot \H$, for $t>0$ we define
the operators
$$ \begin{aligned}
\uuD \uPB_{(1)}(t) &:=    (\uD \PB(t))  \ot \uSB(t), \\ \uuD
\uPB_{(1)}^*(t)
   & =   (  \uD \PB^*(t) ) \ot \uSB^*(t).
\end{aligned}\label{eq:DPformulas}
$$
By Theorem \ref{thm:Rgradientestimates} these
operators extend uniquely to bounded operators from
$\uL_{(1)}^p$ to $\uL_{(2)}^p$.

 \bpr \label{prop:Hvalued}
Let $1< p < \infty.$
 \ben
  \item[\rm(i)]
The collections
$  \{ \sqrt{t} \uuD \uPB_{(1)}(t)  : t >0 \}$ and
$  \{ \sqrt{t} \uuD \uPB_{(1)}^*(t)  : t >0 \}$
are $R$-bounded in $\calL(\uL_{(1)}^p,\uL_{(2)}^p).$
  \item[\rm(ii)]
The following square function estimates hold for $F \in
\uL_{(1)}^p$:
 \begin{align*}
\Big\| \Big( \int_0^\infty \| \sqrt{t}\uuD \uPB_{(1)}(t) F \|^2
     \;\frac{dt}{t} \Big)^{{1/2}} \Big\|_p  &\lesssim \|F\|_p, \\
\Big\| \Big( \int_0^\infty \| \sqrt{t}\uuD \uPB_{(1)}^*(t) F \|^2
     \;\frac{dt}{t} \Big)^{{1/2}}\Big\|_p  &\lesssim \|F\|_p.
 \end{align*}
  \item[\rm(iii)]
The domain inclusions $\D(\sqrt{\uLB_{(1)}}) \subseteq \D(\uuD)$ and
$\D(\sqrt{\uL_{(1)}^*}) \subseteq \D(\uuD)$ hold with norm estimates
 \begin{align*}
\| \uuD F\|_{p} &\lesssim  \| F \|_p +  \| \sqrt{\uLB_{(1)}}F\n_{p}
\\
 \text{and}\quad \| \uuD F\|_{p}&\lesssim
    \| F \|_p + \| \sqrt{\uLB_{(1)}^*}F\n_{p}.
 \end{align*}
 \een
 \epr

 \bpf
(i): \  The $R$-boundedness is a consequence from (an easy Hilbert
space-valued extension of) Proposition \ref{prop:Rtensor} combined with
\eqref{eq:DPformulas} and Theorem
\ref{thm:Rgradientestimates}.

(ii): \  Since $\uA$ has a bounded $H^\infty$-calculus on $\H$ of
angle $< \frac12 \pi$, the same holds for $\uA^*.$ Proposition
\ref{prop:tensorcalculus} implies that
$\uL_{(1)}$ and $\uL_{(1)}^*$
have bounded $H^\infty$-functional calculi on $\uL_{(1)}^p$ of angle $< \frac12 \pi.$
The domain inclusions $\D(\uL_{(1)}) \subseteq \D(\uuD)$ and $\D(\uL_{(1)}^*)
\subseteq \D(\uuD)$ follow from (i) by taking Laplace transforms.
By combining (i) and Proposition \ref{prop:sq-fc1} we obtain the
desired result.

(iii): \  Combining the fact that $\sqrt{I+\uL_{(2)}}$ has a bounded
$H^\infty$-calculus of angle $< \frac12 \pi$ with Proposition \ref{prop:Hinfty-sfe},
the commutation relation $\uuD
\uP_{(1)}(t) = \uP_{(2)}(t) \uuD,$ the $\H$-valued analogue of Lemma
\ref{lem:HdominatesG}, and the first estimate of (ii), for all $F \in
\D(\uL_{(1)})$ we obtain
 \begin{align*}
  \| \uuD F \|_p
 &  \lesssim
    \Big\|  \Big( \int_0^\infty
    \| t\sqrt{I+\uLB_{(2)}} \uQB_{(2)}(t) \uuD F\|^2
     \,\frac{dt}{t}  \Big)^{1/2} \Big\|_p
  \\&   =
   \Big\|  \Big( \int_0^\infty \| t \uuD  \uQB_{(1)}(t)
    \sqrt{I+\uLB_{(1)}}F\|^2
  \,\frac{dt}{t}  \Big)^{1/2} \Big\|_p
 \\ &  \leq  \Big\| \Big( \int_0^\infty \| \sqrt{t}\uuD
  e^{-t} \uPB_{(1)}(t)  \sqrt{I+\uLB_{(1)}} F \|^2
   \;\frac{dt}{t} \Big)^{{1/2}} \Big\|_p
 \\ &  \leq  \Big\| \Big( \int_0^\infty \| \sqrt{t}\uuD
  \uPB_{(1)}(t)  \sqrt{I+\uLB_{(1)}} F \|^2
   \;\frac{dt}{t} \Big)^{{1/2}} \Big\|_p
 \\ & \lesssim  \big\|  \sqrt{I+\uLB_{(1)}}F \big\|_p
 \\ & \eqsim \|F\|_p +   \big\|  \sqrt{\uLB_{(1)}}F \big\|_p.
 \end{align*}
This gives the first estimate. Since $\D(\uLB_{(1)})$ is a core
for $\D( \sqrt{ \uLB_{(1)} }  ),$ the domain inclusion follows as
well.

 To prove the second estimate we put $T := \PB^* \ot
{\uSB}_* \ot \uSB^*,$ where ${\uSB}_*$ is the bounded analytic
semigroup generated by $- V V^* B^*$; this notation is as in Section \ref{sec:Kato}.
Note that the negative
generator $C$ of $T$ has a bounded $H^\infty$-calculus of angle $<\frac12\pi$; this follows from
the fact that if Theorem \ref{thm:Kato} holds for $B$, then it also holds for
$B\s$ (see Lemma \ref{lem:HilbertspaceKato}) and therefore the negative
generators of $\uS\s$ and $\uS_*$ both have bounded $H^\infty$-calculi of angle $<\frac12\pi$. Let $R$ be
the semigroup generated by $-\sqrt{I+C}.$ Using the identity
 \begin{align*}
 \uuD \uPB_{(1)}^*(t) F = T(t) \uuD F,
 \end{align*}
and arguing as above, for all $F \in \D(\uL_{(1)}^*)$ we obtain
 \begin{align*}
  \| \uuD F \|_p
 &  \lesssim
    \Big\|  \Big( \int_0^\infty
    \| t\sqrt{I+C} R(t) \uuD F\|^2  \,\frac{dt}{t}
      \Big)^{1/2} \Big\|_p
    \\&   =
   \Big\|  \Big( \int_0^\infty \| t \uuD  \uQB_{(1)}^*(t)
    \sqrt{I+\uLB_{(1)}^*}F\|^2
  \,\frac{dt}{t}  \Big)^{1/2} \Big\|_p
 \\ &  \leq  \Big\| \Big( \int_0^\infty \| \sqrt{t}\uuD
   e^{-t}\uPB_{(1)}^*(t)  \sqrt{I+\uLB_{(1)}^*} F \|^2  \;\frac{dt}{t}
        \Big)^{{1/2}} \Big\|_p
 \\ &  \leq  \Big\| \Big( \int_0^\infty \| \sqrt{t}\uuD
       \uPB_{(1)}^*(t)  \sqrt{I+\uLB_{(1)}^*} F \|^2  \;\frac{dt}{t}
        \Big)^{{1/2}} \Big\|_p
 \\ & \lesssim  \big\|  \sqrt{I+\uLB_{(1)}^*}F \big\|_p
 \\ & \eqsim \|F\|_p +   \big\|  \sqrt{\uLB_{(1)}^*}F \big\|_p .
 \end{align*}
The second domain inclusion now follows from the fact that $\D(\uL_{(1)}^*)$ is
a core for $\D(\sqrt{\uL_{(1)}^*}).$
 \epf

In the following theorem we give a characterisation of
$\D(\sqrt{\uLB_{(1)}}).$ Since $\sqrt{\uLB} = \sqrt{\uLB_{(1)}}$ on
$\ov{\Ran_p(\uD)},$ this gives a further equivalence of norms for
$\sqrt{\uLB}$ on $\ov{\Ran_p(\uD)}$, different from the one in Theorem
\ref{thm:Kato}. In the proof of Theorem \ref{thm:DL}
we use both equivalences to determine the domain of $\LB.$

First we need a simple lemma.

 \blem \label{lem:invariant}
Let $1<p<\infty.$ The semigroup $\uQB_{(1)}$  restricts to
$C_0$-semigroups on the space $\D(\uuD) \cap \D(\sqrt{I \ot \uAB})$.
 \elem

 \bpf
It suffices to prove the result with $\uQB_{(1)}$ replaced by
$\uPB_{(1)}$; the latter is readily seen to restrict to a
$C_0$-semigroup on $\D(\uuD) \cap \D(\sqrt{I \ot \uAB})$ by the identities
$\uuD \PB_{(1)}(t) = \uPB_{(2)}(t)\uuD  $ and $ \sqrt{I\ot
\uAB}\,\uPB_{(1)}(t) = \uPB_{(1)}(t)\sqrt{I\ot \uAB}$.
 \epf

 \bthm \label{thm:sqrtuLBdomain}
Let $1<p<\infty.$ We have equality of domains
 \begin{align*}
 \D(\sqrt{\uLB_{(1)}}) & = \D(\uuD) \cap \D(\sqrt{I \ot \uAB}),
  \end{align*}
with equivalence of norms
 \begin{align*}
  \|F\|_p
 + \big\|\sqrt{\uLB_{(1)}} F \big\|_p & \eqsim
 \|F\|_p +  \|\uuD F\|_p +  \|\sqrt{I \ot \uAB} F\|_p,
 \end{align*}
 \ethm

 \bpf
By a result of Kalton and Weis \cite[Theorem 6.3]{KW}, applied to the sums
$\uLB_{(1)} = L\ot I + I\ot \uAB$ and $\uLB_{(1)}\s = L\s\ot I + I\ot \uAB\s$, we have
the estimates
 \begin{align*}
 \|  (I \ot\uAB) F\|_p &\lesssim
 \|F\|_p + \| \uLB_{(1)} F\|_p, \quad F \in \D(\uLB_{(1)})\\
 \|  (I \ot\uAB^*) F\|_p &\lesssim
 \|F\|_p + \| \uLB_{(1)}^* F\|_p, \quad F \in \D(\uLB_{(1)}^*).
 \end{align*}
Since the square root domains equal the complex interpolation spaces
at exponent $\frac12$ for sectorial operators with bounded imaginary
powers \cite[Theorem 6.6.9]{Haase}, by interpolating the inclusions
$$
 \D(\uLB_{(1)}) \embed \D(I \ot\uAB),  \quad
 \D(\uLB_{(1)}^*) \embed \D(I \ot\uAB^*),
$$
with the identity operator,
we obtain the estimates
\beq
\label{eq:dualest} \bal
 \|\sqrt{I \ot \uAB} F\|_p &\lesssim \|F\|_p
 + \big\|\sqrt{\uLB_{(1)}} F \big\|_p, \quad F \in \D(\uLB_{(1)})\\
  \|\sqrt{I \ot \uAB^*} F\|_p &\lesssim \|F\|_p
 + \big\|\sqrt{\uLB_{(1)}^*} F \big\|_p, \quad F \in \D(\uLB_{(1)}^*).
 \eal
\eeq
Combining these estimates with Proposition \ref{prop:Hvalued} we obtain
 \begin{align*}
 \|F\|_p +  \|\uuD F\|_p +  \|\sqrt{I \ot \uAB} F\|_p &\lesssim \|F\|_p
 + \big\|\sqrt{\uLB_{(1)}} F \big\|_p, \quad F \in \D(\uLB_{(1)})\\
 \|F\|_p +  \|\uuD F\|_p +  \|\sqrt{I \ot \uAB^*}
   F\|_p &\lesssim \|F\|_p
 + \big\|\sqrt{\uLB_{(1)}^*} F \big\|_p, \quad F \in \D(\uLB_{(1)}^*).
 \end{align*}

Next we prove the reverse estimates. For $F \in \D(\LB) \ot
\D( \uAB)$ and $G \in \Dom_q(\LB^*) \ot \D(\uAB^*)$ ($\frac1p+\frac1q =1$) we
have $F\in \D(\uLB_{(1)})$, $G\in \Dom_q(\uLB_{(1)}\s)$,
and
 \begin{align*}
   \ip{\sqrt{I+\uLB_{(1)}} F, G}
    & = \ip{(I+\uLB_{(1)}) F,  1/\sqrt{\uLB_{(1)}^* + I}G}
  \\ & = \ip{ F, 1/\sqrt{I+\uLB_{(1)}^*}G}
   + \ip{(\LB \ot I) F, 1/\sqrt{I+\uLB_{(1)}^*}G}
  \\ & \qquad + \ip{(I \ot \uAB) F, 1/\sqrt{I+\uLB_{(1)}^*}G}
  \\& = \ip{ F, 1/\sqrt{I+\uLB_{(1)}^*}G}
  +
  \ip{B\uuD F, \uuD /\sqrt{I+\uLB_{(1)}^*}G}
  \\ & \qquad  + \ip{ \sqrt{I \ot \uAB} F,
    \sqrt{I \ot \uAB^*} /\sqrt{I+\uLB_{(1)}^*}G}.
 \end{align*}
Using 
the boundedness of the three operators $1/\sqrt{I+\uLB_{(1)}^*}$,
$\uuD/\sqrt{I+\uLB_{(1)}^*}$ (by Proposition \ref{prop:Hvalued}(iii)), and $\sqrt{I \ot
\uAB^*}/\sqrt{I+\uLB_{(1)}^*}$ (by the second estimate in \eqref{eq:dualest}),
 we find
 \begin{align*}
  \big\| \sqrt{I+\uLB_{(1)}} F \big\|_p
 &= \sup_{\|G\|_q \leq 1}
  |\ip{\sqrt{I+\uLB_{(1)}} F, G}|
 \\&   \leq \sup_{\|G\|_q \leq 1}
  \| F \|_p \, \| 1/\sqrt{I+\uLB_{(1)}^*}G \|_q
  \\ & \qquad  +
 \|B\|\,
 \|\uuD  F \|_p\, \| \uuD/\sqrt{I+\uLB_{(1)}^*} G \|_q
 \\& \qquad    + \|\sqrt{I \ot \uAB} F\|_p \,
        \| \sqrt{I \ot \uAB^*}/\sqrt{I+\uLB_{(1)}^*} G \|_q
 \\&   \lesssim  \| F \|_p + \| \uuD F \|_p
        + \| \sqrt{I \ot \uAB} F\|_p.
 \end{align*}
The estimate
$$ \big\| \sqrt{I+\uLB_{(1)}\s} F \big\|_p \lesssim  \| F \|_p + \| \uuD F \|_p
        + \| \sqrt{I \ot \uAB\s} F\|_p
$$
is proved similarly and will not be needed.

It remains to prove the equality of domains.
Since $\D(\LB)
\ot \Dom(\uAB)$ is a core for $\D(\uLB_{(1)}),$ it is also a core for
$\D(\sqrt{\uLB_{(1)}}).$ Using this, the domain inclusion $
 \D(\sqrt{\uLB_{(1)}}) \subseteq \D(\uuD)
\cap \D(\sqrt{I \ot \uAB})$ follows, and the equivalence of norms
extends to all $F \in \D(\sqrt{\uLB_{(1)}}).$

Again by the equivalence of norms,
$\D(\sqrt{\uLB_{(1)}})$ is closed in $\D(\uuD) \cap \D(\sqrt{I \ot
\uAB}).$ It remains to prove that the inclusion is dense. This follows from
Lemma \ref{lem:invariant}, since for $F \in \D(\uuD) \cap \D(\sqrt{I
\ot \uAB})$ and $t
>0$ we have $\uQB_{(1)}(t) F \in \D(\sqrt{\uLB_{(1)}})$ and
$\uQB_{(1)}(t) F \to F$ in the norm of $\D(\uuD) \cap \D(\sqrt{I \ot
\uAB})$ as $t \downarrow 0.$
 \epf

Recall that $D$ denotes the Malliavin derivative. Since $A$ is a
closed operator, it follows from the results in \cite{GGvN03} that
the operator $\AB D$, initially defined on $\F C_{\rm b
}^1(E;\Dom(\AB))$, is closable as an operator from $L^p$ into
$L^p(E,\mu;H)$ for $1 < p < \infty.$ We denote its closure by
$D_{\AB}.$

We also consider the operator $\uD^2$ defined by
 \begin{align*}
 \D(\uD^2) := \{ f \in \D(\uD) : \uD f \in \D(\uuD)\}, \quad
 \uD^2 := \uuD \uD.
 \end{align*}
It is easy to check that this operator is closed from
$\D(\uD)$ into $L^p(E,\mu;\H^{\ot 2}).$

 \blem\label{lem:Psemigroupintersection}
Let $1<p<\infty.$ The semigroup $\PB$ restricts to a $C_0$-semigroup
on the space $\D(D_V^2) \cap \D(D_{\AB}).$
 \elem

 \bpf
An easy argument based on Theorem \ref{thm:uLsemigroup} shows
that $\PB(t) \D(\uD^2) \subseteq \D(\uD^2)$ and
 \begin{align*}
 \quad \uD^2 \PB(t) f :=
\uPB_{(2)}(t) \uD^2 f, \quad f \in \D(\uD^2).
 \end{align*}
Similarly, we have
 $\PB(t) \D(D_A) \subseteq \D(D_A)$ and
$$ D_A \PB(t) f = (\PB(t) \ot \SB(t)) D_A f, \quad f \in \D(D_A).$$
These identities easily imply the result.
 \epf

 \bpf[Proof of Theorem \ref{thm:DL}]
Using the fact that $\D(\LB) \subseteq \D(\uD),$ Proposition
\ref{prop:Lpformula}, the domain equality
$\D(\sqrt{\uL}) = \D(\uD\s B)$ (see (5) at the end of Section \ref{sec:Kato}),
Theorem
\ref{thm:sqrtuLBdomain}, the domain equality
$\Dom(\sqrt{\uAB}) = \Dom(V^*B)$ on $\ov{\Ran(V)}$ (see (6) at the end of
Section \ref{sec:Kato}), and the definition of
$D_{\AB},$ for $f \in \D(\LB)$ we obtain
 \begin{align*}
 \|f\|_p +  \|\LB f\|_p
    &  \eqsim  \|f\|_p +   \| \uD f \|_p + \|\LB f\|_p
  \\&   = \|f\|_p +  \| \uD f \|_p +  \| (\uD^* B) \uD f \|_p
  \\&  \eqsim \|f\|_p +   \| \uD f \|_p +
        \| \sqrt{\uLB} \uD f \|_p
  \\&  \eqsim \|f\|_p + \| \uD f \|_p +
           \| \uD^2 f \|_p + \| \sqrt{\uAB} \uD f \|_p
  \\&  \eqsim \|f\|_p + \| \uD f \|_p +
           \| \uD^2 f \|_p + \| (V^*B) \uD f \|_p
  \\&  \eqsim \|f\|_p +  \| \uD f \|_p +
           \| \uD^2 f \|_p + \| D_{\AB} f \|_p,
 \end{align*}
This proves the equivalence of norms and the domain inclusion
$\D(\LB) \subseteq \D(D_V^2) \cap \D(D_{\AB}).$ To obtain equality
of domains it remains to show that this inclusion is both closed and
dense. Closedness follows easily from the norm estimate and density
follows from Lemma \ref{lem:Psemigroupintersection} in the same way
as in Theorem \ref{thm:sqrtuLBdomain}.
 \epf

Note that Theorem \ref{thm:DL} is natural in view of the expression
 \begin{align*}
  L f(x) &= \uD^* B \uD f(x)\\ &= - \sum_{j,k=1}^n [B V h_j, V h_k]\partial_j \partial_k \varphi(\phi_{h_1}, \ldots, \phi_{h_n})
    + \sum_{j=1}^n \partial_j \varphi(\phi_{h_1}, \ldots,
    \phi_{h_n}) \cdot \phi_{A h_j},
 \end{align*}
which holds for all $f \in \F C_{\rm b}^\infty(E;\Dom(\AB))$ of the
form $f = \varphi(\phi_{h_1}, \ldots, \phi_{h_n}).$

\section{Proofs of Theorems \ref{thm:Hodge-main} and \ref{thm:R-bisectorial-main}}
\label{sec:Hodge}

The first part of Theorem \ref{thm:Hodge-main}
has already been proved in Proposition \ref{prop:rankeridentities}.
We begin with some preparations for the proof of the second part.

Let us denote by $\F P(E;\Dom(V))$ the vector space of all functions
of the form
$p = \varphi(\phi_{h_1},\dots,\phi_{h_n})$ with $h_j\in \Dom(V)$ for
$j=1,\dots,n$ and $\varphi:\R^n\to\R$ a polynomial in $n$ variables.
In the proof of the next proposition we need the following auxiliary result.

\begin{lemma}\label{lem:poldense}
For $1\le p<\infty$, $\F P(E;\Dom(V))$ is a core for $\D(\uD)$.
\end{lemma}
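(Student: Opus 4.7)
The plan is to reduce the problem to a finite-dimensional polynomial approximation statement and then solve it via the Mehler semigroup combined with truncated Hermite expansions.

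Since $\F C_{\rm b}^1(E;\Dom(V))$ is a core for $\D(\uD)$ by the very definition of the closure $\uD$, it suffices to approximate every $f = \varphi(\phi_{h_1},\dots,\phi_{h_n}) \in \F C_{\rm b}^1(E;\Dom(V))$ in the graph norm of $\uD$ by functions in $\F P(E;\Dom(V))$. Since $\Dom(V)$ is a linear subspace of $H$, Gram-Schmidt applied to $h_1,\dots,h_n$ produces orthonormal vectors still lying in $\Dom(V)$ and spanning the same subspace; after rewriting $\varphi$ accordingly I may assume that $h_1,\dots,h_n$ are orthonormal in $H$, so that $(\phi_{h_1},\dots,\phi_{h_n})$ is a standard Gaussian vector on $\R^n$ with law $\gamma_n$. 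The task then becomes: construct polynomials $p_k:\R^n\to\R$ with $p_k\to\varphi$ and $\partial_j p_k\to\partial_j\varphi$ in $L^p(\R^n,\gamma_n)$ for every $1\le j\le n$. Pulling back, $p_k(\phi_{h_1},\dots,\phi_{h_n})\in\F P(E;\Dom(V))$ will then approximate $f$ in $\D(\uD)$, since $\uD p_k = \sum_{j=1}^n \partial_j p_k(\phi_{h_1},\dots,\phi_{h_n})\otimes Vh_j$.

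To construct the $p_k$, I would first smooth $\varphi$ using the classical Ornstein-Uhlenbeck semigroup $T_t$ on $(\R^n,\gamma_n)$. Since $\varphi,\partial_j\varphi\in C_{\rm b}(\R^n)\subset L^p(\gamma_n)$ and $T_t$ is strongly continuous on $L^p(\gamma_n)$, the commutation identity $\partial_j T_t=e^{-t}T_t\partial_j$ yields $T_t\varphi\to\varphi$ and $\partial_j T_t\varphi\to\partial_j\varphi$ in $L^p(\gamma_n)$ as $t\downarrow 0$. For each fixed $t>0$ I would then approximate $T_t\varphi$ by the partial sums of its Hermite expansion: writing $\varphi=\sum_\alpha c_\alpha H_\alpha$ in normalized multidimensional Hermite polynomials, set $T_t^N\varphi:=\sum_{|\alpha|\le N}e^{-|\alpha|t}c_\alpha H_\alpha$, which is a genuine polynomial.

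The main obstacle will be to prove $T_t^N\varphi\to T_t\varphi$ and $\partial_j T_t^N\varphi\to\partial_j T_t\varphi$ in $L^p(\gamma_n)$. I would combine three ingredients: the uniform bound $|c_\alpha|\le\|\varphi\|_\infty$ (from Cauchy-Schwarz and $\|H_\alpha\|_{L^1(\gamma_n)}\le\|H_\alpha\|_{L^2(\gamma_n)}=1$), Nelson's hypercontractivity estimate $\|H_\alpha\|_{L^p(\gamma_n)}\lesssim (p-1)^{|\alpha|/2}$ for $p\ge 2$, and the recurrence $\partial_j H_\alpha=\sqrt{\alpha_j}\,H_{\alpha-e_j}$. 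When $e^{-2t}(p-1)<1$, the resulting tail of $T_t\varphi - T_t^N\varphi$ is dominated by a convergent geometric series (with an extra factor $\sqrt{|\alpha|}$ absorbed into the geometric decay in the case of the derivative); for smaller $t>0$ or for $p\in[1,2)$, the semigroup identity $T_t^N=T_{t-s}T_s^N$ lets me iterate the hypercontractivity bound to reach any $p\in[1,\infty)$. A standard diagonal extraction with $t_k\downarrow 0$ and $N_k\uparrow\infty$ chosen fast enough then delivers polynomial approximants $p_k=T_{t_k}^{N_k}\varphi$ with the required convergence in $L^p(\gamma_n)$, completing the proof.
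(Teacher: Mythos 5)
Your reduction is exactly the one the paper uses: pass to cylindrical $f=\varphi(\phi_{h_1},\dots,\phi_{h_n})$ with $\varphi\in C^1_{\rm b}(\R^n)$, Gram--Schmidt inside $\Dom(V)$ to make the $h_j$ orthonormal, push forward to the standard Gaussian $\gamma_n$ on $\R^n$, and reduce to approximating $\varphi$ and $\nabla\varphi$ simultaneously in $L^p(\gamma_n)$ by polynomials (the paper simply cites this last step as a classical fact). Your treatment of the infinite-dimensional part and of the case $1\le p\le 2$ of the finite-dimensional step is fine.

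The gap is in the finite-dimensional step for $p>2$. Your convergence $T^N_t\varphi\to T_t\varphi$ (and likewise for the gradient) rests on absolute convergence of the damped Hermite tail, bounded by $\|\varphi\|_\infty\sum_{|\alpha|>N}\bigl(e^{-t}\sqrt{p-1}\bigr)^{|\alpha|}$ up to polynomial factors, and this requires $e^{-2t}(p-1)<1$; if $e^{-t}\sqrt{p-1}\ge 1$ the bound is infinite for every $N$, so the estimate gives nothing. But your diagonal argument needs the truncation step for a sequence $t_k\downarrow 0$, and for $p>2$ the condition $e^{2t_k}>p-1$ fails for all small $t_k$. The proposed rescue via $T^N_t=T_{t-s}T^N_s$ cannot close this: to convert the $L^2$- (or $L^q$-) smallness of $T_s\varphi-T^N_s\varphi$ into $L^p$-smallness you need $T_{t-s}:L^q\to L^p$, i.e.\ $q\ge 1+(p-1)e^{-2(t-s)}$ by hypercontractivity, while controlling the tail in $L^q$ by your series argument needs $q<1+e^{2s}$; these are compatible only if $p-1<e^{2t}$, which is precisely the restriction you are trying to remove. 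Hypercontractive gain is additive in time, so no splitting or iteration of the semigroup identity over $[0,t]$ can do better than the total time $t$ allows. Hence, as written, the argument does not prove that polynomials approximate $T_t\varphi$ in the Gaussian Sobolev norm for small $t$ when $p>2$, and the lemma remains unproved in that range; you would need a different argument for the classical density statement there (for instance the one alluded to in the paper, or an approximation that does not go through degree-truncation of the Hermite expansion at fixed small $t$).
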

\begin{proof}
A simple approximation argument shows that $\F P(E;\Dom(V)) \subseteq
\D(\uD)$. Thus it suffices to approximate elements of $\F C_{\rm
b}^1(E;\Dom(V))$
 in the graph norm of $\D(\uD)$ with elements of $\F P(E;\Dom(V))$.
Let $f\in \F C_{\rm b}^1(E;\Dom(V))$ be of the form $f =
\varphi(\phi_{h_{1}},\dots,\phi_{h_{n}})$ with $h_j\in \Dom(V)$ for
$j=1,\dots,n$ and $\varphi \in C_{\rm b}^1(\R^n)$. By a Gram-Schmidt
argument we may assume that the elements $h_1, \dots, h_n$ are
orthonormal in $H$. Taking Borel versions of the functions $x\mapsto
\phi_{h_j}(x)$, the image measure of $\mu$ under the transformation
$x\mapsto (\phi_{h_{1}}(x),\dots.\phi_{h_{n}}(x))$ is the standard
Gaussian measure $\ga_n$ on $\R^n$.

This reduces the problem to finding polynomials $p_k$ in $n$
variables such that  $p_k\to \varphi $ in $L^p(\R^n,\ga_n)$ and
$\nabla p_k\to \nabla\varphi$ in $L^p(\R^n,\ga_n;\R^n)$. It is a
classical fact that such polynomials exist.
\end{proof}

In the remainder of this section we interpret $\uD\s B$ as a closed densely
defined operator from $\uL^p$ to $L^p$.

\bpf[Proof of Theorem \ref{thm:Hodge-main}, second part]
We shall prove separately that
\begin{align}
 \ov{\Ran_p(\uD)} + \Null_p(\uD\s B) & = \uL^p, \label{eq1}\\
\ov{\Ran_p(\uD)} \cap \Null_p(\uD\s B) & =  \{0\}. \label{eq2}
\end{align}
The proof of \eqref{eq1} is more or less standard. The idea behind
the proof of \eqref{eq2}  is to note that for $p=2$ the Hodge
decomposition is obtained as a special case of the Hodge
decomposition theorem of Axelsson, Keith, and M$^{\rm c}$Intosh
\cite{AKM}, and to use this fact together with the fact that the
$L^p$-norm and $L^2$-norm are equivalent on each summand in the
Wiener-It\^o decomposition.

We begin with the proof of \eqref{eq1}. By Theorem
\ref{thm:Kato}(1) the operator $R := \uD/\sqrt{\LB}$ is well
defined on $\Ran_p(\sqrt{\LB})$ and bounded. In view of the
decomposition $L^p = \ov{\Ran_p(\sqrt{\LB})} \oplus
\Null_p(\sqrt{\LB})$ we may extend $R$ to $L^p$ by putting
$R|_{\Null_p(\sqrt{\LB})} := 0$. A similar remark applies to
the operator $R_* := \uD/\sqrt{\LB^*}.$

For $F\in \uL^p$ we claim that $R R_*\s F\in
\ov{\Ran_p(\uD)}$, where $R_*\s:=(R_*)\s$. Indeed, there exists $f \in
\Null_p(\sqrt{\LB})$ and a sequence $f_n \in \D(\sqrt{\LB})$
such that $f + \sqrt{\LB}f_n \to R_*^* F$ in $L^p.$ Therefore
$ R R_*^* F = \lim_{n\to \infty} \uD f_n \in
\ov{\Ran_p(\uD)}.$

Now, for functions
 $\psi \in\D(\sqrt{\LB})$ and $\phi \in \Dom_q(\sqrt{\LB^*})$,
$$ \lb \uD\psi, B^*\uD\phi\rb
= \ip{\LB \psi, \phi} = \lb \sqrt{\LB}\psi,
\sqrt{\LB^*}\phi\rb.
$$
Furthermore, approximating a function $f\in L^p$ by a sequence
$(f_0 + \sqrt{\LB} f_n)_{n\ge 1}$ with $f_0 \in
\Null_p(\sqrt{\LB})$ and $f_n \in \D(\sqrt{\LB})$ we obtain
 \begin{align*}
 \ip{R f, B^*\uD\phi }
 & = \lim_{n\to \infty} \ip{\uD f_n, B^*\uD \phi}
 \\ & = \lim_{n\to \infty} \ip{\sqrt{\LB} f_n, \sqrt{\LB ^*} \phi}
 \\ & =  \ip{f - f_0, \sqrt{\LB ^*} \phi}
 \\ & =  \ip{f , \sqrt{\LB^*} \phi}.
 \end{align*}
Hence for the duality between $L^p$ and $L^q$  we obtain
$$
\lb F - R R_*^* B F, B^*\uD \phi\rb
   = \lb F, B^*\uD \phi\rb - \lb F,B^* R_*\sqrt{\LB ^*} \phi\rb
     =0.
$$
This shows that $F-R R_*\s B F\in \Null_p(\uD\s B)$. This
completes the proof of \eqref{eq1}.

We continue with the proof of \eqref{eq2}.
Assume that $G\in \D(\uD \s B)$ satisfies $\uD \s B G = 0$. Then for
all $f\in \Dq(\uD)$ we have $\lb B^* \uD f, G\rb =0$, where the
duality is between $\uL^q$ and $\uL^p$.

Let $I_{p,m}$ and $I_{q,m}$ denote the projections in $L^p$
and $L^q$ onto the $m$-th Wiener-It\^o chaoses. The ranges of
$I_{p,m}$ and $I_{q,m}$ are isomorphic by the equivalence of norms on the
Wiener-It\^o chaoses.
Note that $I_{p,m}\s =
I_{q,m}$. Then $I_{p,m}\otimes I$ and $I_{q,m}\otimes I$ are
bounded projections in $\uL^p$ and $\uL^q.$
Let $j_{p,m}$ denote the induced isomorphism of the
range of $I_{p,m}\otimes I$ onto the range of $I_{2,m}\otimes I$.

For cylindrical polynomials $f \in \F P(E;\Dom(V)) \cap
H^{(m)}$ (where $H^{(m)}$ is as in Section \ref{sec:examples})
we have the identity
 $B^*\uD f  = (I_{q, m-1}\ot I)B^*\uD f $
 and
 \beq  \bal \label{eq:chaosm}
 [ j_{p,m-1}(I_{p,m-1}\otimes I) G,B^*\uD f]
 & =   \lb (I_{p,m-1}\otimes I) G,B^*\uD f\rb
 \\ & = \lb G,(I_{q,m-1}\otimes I) B^*\uD f\rb
 \\ & = \lb G,B^*\uD f \rb
 \\ & = 0.
\eal
 \eeq
In the first term, the duality is the inner product of $\uL^2$.

On the other hand, if $f \in \F P(E;\Dom(V)) \cap H^{(n)}$ for
some $n\not=m$, then $j_{p,m-1}\s = j_{q,m-1}$ implies
 \beq\bal   \label{eq:chaosnotm}
\ &  [ j_{p,m-1}(I_{p,m-1}\otimes I) G,B^* \uD  f]
\\ & \qquad = \lb  (I_{p, m-1}\otimes I)G, B^* \uD f\rb
\\ & \qquad = \lb  (I_{p,m-1}\otimes I)G,
              ( I_{q,n-1} \otimes I) B^* \uD f \rb
\\ & \qquad = [j_{p,n-1}(I_{p,n-1}\otimes I)(I_{p,m-1}\otimes I)G,
        B^* \uD f]
\\ & \qquad = 0, \eal
 \eeq
since $\uD f$ is in the $(n-1)$-th chaos; in the last step we used
the $L^2$-orthogonality of the chaoses.

Since the cylindrical polynomials form a core for $\Dom(\uD)$
by Lemma \ref{lem:poldense} and $B$ is bounded
on $\H$, we conclude from \eqref{eq:chaosm}
and \eqref{eq:chaosnotm} that $j_{p,m-1}(I_{m-1}\otimes I) G$
annihilates $\Ran(B\s\uD)$ and therefore it belongs to
$\Null(\uD\s B)$.

Next we claim that if $G\in \overline{\Ran_p(\uD )}$, then
$j_{p,m-1}(I_{p,m-1}\otimes I)G\in \overline{\Ran(\uD)}$. Indeed,
from $G = \lim_{k\to\infty}\uD g_k$ in $\uL^p$ it follows that
\begin{align*}j_{p,m-1}(I_{p,m-1}\otimes I)G
  = \lim_{k\to
\infty} \uD j_{p,m}(I_{p,m}\otimes I)g_k \in \overline{\Ran(\uD )}.
 \end{align*}

Combining what we have proved, we see that if $G\in
\overline{\Ran_p(\uD )}\cap \Null_p(\uD \s B)$,
 then $j_{p,m-1}(I_{p,m-1}\otimes I)G\in \overline{\Ran(\uD)}\cap
\Null(\uD \s B)$. Hence, $j_{p,m-1}(I_{p,m-1}\otimes I)G=0$ by the
Hodge decomposition of $\uL^2$ \cite{AKM}. It follows that
$(I_{p,m-1}\otimes I)G= 0$ for all $m\ge 1$, and therefore $G=0$.
This concludes the proof of \eqref{eq2}. \epf

The next application is included for reasons of completeness.

\begin{corollary}
\label{cor:ranges} If the equivalent conditions of Theorem
\ref{thm:Kato} hold, then $$\ov{\Ran_p(\uD\s B)} =\ov{\Ran_p(\uD\s)}.$$
\end{corollary}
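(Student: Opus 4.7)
The plan is to deduce this from the Hodge decompositions of Theorem \ref{thm:Hodge-main}, applied to two different choices of the coercive operator: our given $B$ and the identity operator on $\H$. The latter trivially satisfies (A3) with coercivity constant $k=1$, and since $\uA_I = VV\s$ is positive self-adjoint (and thus admits a bounded $H^\infty$-functional calculus on $\ov{\Ran(V)}$), the Kato conditions of Theorem \ref{thm:Kato} are automatically satisfied for the choice $B = I$.

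First I would record the easy inclusion $\ov{\Ran_p(\uD\s B)} \subseteq \ov{\Ran_p(\uD\s)}$. Indeed, for $F \in \D(\uD\s B)$, by definition $BF \in \D(\uD\s)$ (where $\uD\s$ is viewed as a closed densely defined operator from $\uL^p$ to $L^p$) and $\uD\s B F = \uD\s(BF) \in \Ran_p(\uD\s)$; passing to closures yields the inclusion.

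Next I would invoke Theorem \ref{thm:Hodge-main} to obtain two direct sum decompositions of $L^p$:
$$L^p = \ov{\Ran_p(\uD\s)} \oplus \Null_p(\uD) = \ov{\Ran_p(\uD\s B)} \oplus \Null_p(\uD).$$
The first decomposition uses $B = I$, for which the Kato hypothesis is automatic as noted above; the second uses our standing assumption that the Kato conditions hold for our $B$.

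Finally I would combine these decompositions: take $f \in \ov{\Ran_p(\uD\s)}$ and write $f = g + h$ according to the second decomposition, so that $g \in \ov{\Ran_p(\uD\s B)}$ and $h \in \Null_p(\uD)$. By the easy inclusion of the first step, $g \in \ov{\Ran_p(\uD\s)}$, so $h = f - g \in \ov{\Ran_p(\uD\s)}$; the first decomposition now forces $h=0$, and hence $f = g \in \ov{\Ran_p(\uD\s B)}$. This gives the reverse inclusion and completes the argument. There is no real obstacle here beyond verifying that the Kato conditions hold in the degenerate case $B = I$, which is immediate from self-adjointness.
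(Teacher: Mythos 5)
Your proof is correct and follows essentially the same route as the paper: both deduce the equality from the Hodge decompositions of Theorem \ref{thm:Hodge-main} applied to $B$ and to $I$ (the Kato conditions for $B=I$ being automatic because $VV\s$ is self-adjoint, hence admits a bounded $H^\infty$-calculus on $\ov{\Ran(V)}$), together with the trivial inclusion $\ov{\Ran_p(\uD\s B)}\subseteq\ov{\Ran_p(\uD\s)}$ coming from the definition of $\D(\uD\s B)$. The only cosmetic difference is that you make explicit the element-wise argument that two direct-sum complements of $\Null_p(\uD)$ related by inclusion must coincide, which the paper leaves implicit.
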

Note that by the second part of Theorem \ref{thm:Hodge-main} it is immaterial
whether we view $\uD\s B$ as an unbounded operator from
$\uL^p$ to $L^p$ or from $\ov{\Ran_p(\uD\s B)}$ to $L^p$.

\begin{proof}
By the first part of Theorem \ref{thm:Hodge-main} (first
applied to $B$ and then to $I$) we have the decompositions
$$ L^p = \Null_p(\uD) \oplus \ov{ \Ran_p(\uD\s B)} = \Null_p(\uD) \oplus
\ov{\Ran_p(\uD\s)},$$ where both  $\uD\s B$ and $\uD\s$ are
viewed as closed densely defined operators from $\ov{\Ran_p(\uD)}$ to
$L^p$. The corollary will follow if we check that $\ov{
\Ran_p(\uD\s B)}\subseteq \ov{ \Ran_p(\uD\s)}$. This inclusion
is trivial if we may interpret $\uD\s B$ and $\uD\s$ as
unbounded operators from $\uL^p$ to $L^p$. By the preceding
remark, we may indeed do so for $\uD\s B$. The proof will be
finished by checking that the conditions of Theorem
\ref{thm:Kato} also hold with $B$ replaced by $I,$ since then
we may do the same for $\uD^*$. But this follows from the fact
that $VV\s$, being self-adjoint on $\ov{\Ran(V)}$, admits a
bounded $H^\infty$-calculus on $\ov{\Ran(V)}.$
\end{proof}

We proceed with the proof of the second part of Theorem
\ref{thm:R-bisectorial-main}.
The first part has been proved in Section
\ref{sec:Kato}.

\begin{proof}[Proof of Theorem \ref{thm:R-bisectorial-main}, second part]
We use the notation $$X_1 := L^p \oplus
\overline{\Ran_p(\uD)} \quad \text{ and } \quad X_2 :=
\Null_p(\uD^*B).$$

Fix $t \in \R \setminus\{0\}.$ First we show that $it-\PiB$ is
injective on $L^p\oplus \uL^p.$ Theorem \ref{thm:Hodge-main} implies
the decomposition
 \begin{align}\label{eq:HodgePiB}
L^p\oplus \uL^p =X_1 \oplus X_2.
 \end{align}
Take $x = x^{(1)}+ x^{(2)} \in X_1 \oplus X_2$, and suppose
that $(it-\PiB)x = 0.$ Then $(it-\PiB)x^{(1)} = 0$ and $it
x^{(2)} = 0.$ Thus $x^{(1)} = x^{(2)} =0,$ since
$\PiB|_{X_1}$ in $X_1$ is bisectorial.

Next we show that $it-\PiB$ is surjective on $L^p\oplus
\uL^p$.  Let $ y^{(1)} \in X_1$ and $y^{(2)}\in
 X_2.$ The equation  $(it-\PiB)(x^{(1)} + x^{(2)}) =
y^{(1)} + y^{(2)}$ is solved by
$$x^{(1)} = (it-\PiB|_{X_1})^{-1}y^{(1)}
 \quad\text{and}\quad x^{(2)} =
(it)^{-1}y^{(2)}.$$ This implies that $it-\PiB$ is surjective.

Using \eqref{eq:HodgePiB} and the sectoriality of $\PiB$ on $X_1$ it
follows that
 \begin{align*}
\|x^{(1)}+x^{(2)}\|
   & \leq \|(it-\PiB|_{X_1})^{-1}\|  \|y^{(1)}\|
     +   |t|^{-1} \|y^{(2)}\|
 \\ &   \lesssim t^{-1} \big(\|y^{(1)}\| + \| y^{(2)}\|\big)
 \\ &  \lesssim  t^{-1} \|y^{(1)}+y^{(2)}\|,
\end{align*}
which is the desired resolvent estimate that shows
 that $\PiB$ is bisectorial on $X_1\oplus X_2$.

To show $R$-bisectoriality of $\PiB$ on $L^p\oplus \uL^p$ we
take $y_j = y_j^{(1)}+ y_j^{(2)} \in X_1 \oplus X_2.$ Let
$(r_j)_{j \geq 1}$ be a Rademacher sequence. Using the
$R$-bisectoriality of $\PiB|_{X_1}$ we obtain
 \begin{align*}
 \E \Big\| \sum_{j=1}^k r_j t_j (it_j-\PiB)^{-1} y_j \Big\|_p
  & \leq \E \Big\| \sum_{j=1}^k r_j t_j
  (it_j-\PiB|_{X_1})^{-1}
          y_j^{(1)} \Big\|_p
  \\ & \qquad \qquad   + \E \Big\| \sum_{j=1}^k r_j t_j
  (it_j-\PiB|_{ X_2 })^{-1}
          y_j^{(2)} \Big\|_p
  \\& \lesssim \E \Big\| \sum_{j=1}^N r_j  y_j^{(1)} \Big\|_p
       + \E \Big\| \sum_{j=1}^k r_j t_j (t_j^{-1}y_j^{(2)}) \Big\|_p
  \\& \lesssim  \E \Big\| \sum_{j=1}^k r_j  y_j \Big\|_p.
 \end{align*}
By an application of the Kahane-Khintchine inequalities we conclude
that $\{ t (it-\PiB)^{-1}\ : t \in \R\setminus\{0\} \}$ is
$R$-bounded on $L^p\oplus \uL^p$. This completes the proof.
\end{proof}

We finish by showing how the first part of Theorem
\ref{thm:R-bisectorial-main} can be used to prove the implication
(2)$\Rightarrow$(1) of Theorem \ref{thm:Kato}. We need the following
lemma, which is an extension of the corresponding Hilbert space
result, cf. \cite[Section (H)]{ADM}.

\begin{proposition}\label{prop:ADM} Let $1<p<\infty$ and suppose
$\calA$ is an $R$-bisectorial operator on a closed subspace
$U$ of $L^p(\mu;\HH)$, where $\HH$ is a Hilbert space. Then
$\calA^2$ is $R$-sectorial and for each $\om \in (0,\frac12
\pi)$ the following assertions are equivalent: \ben
\item[\rm(1)] $\calA$ admits a bounded $H^\infty(\S_\om)$-functional calculus;
\item[\rm(2)] $\calA^2$ admits a bounded $H^\infty(\S_{2\om}^+)$-functional
calculus. \een
\end{proposition}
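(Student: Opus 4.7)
The plan is to prove the $R$-sectoriality of $\calA^2$ first, and then establish the equivalence of the two $H^\infty$-calculi by transferring square function estimates between $\calA$ and $\calA^2$ via the map $z\mapsto z^2$.

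For the $R$-sectoriality of $\calA^2$, I would exploit the factorisation
$$ \l - \calA^2 = (\mu-\calA)(\mu+\calA), \qquad \mu := \sqrt{\l},$$
valid for $\l \in \C\setminus \overline{\S_{2\om}^+}$, noting that $\pm\sqrt{\l} \notin \overline{\S_\om}$ for such $\l$. The $R$-bisectoriality of $\calA$ yields $R$-boundedness of the families $\{\mu(\mu\pm\calA)^{-1}\}$ for $\mu$ outside $\overline{\S_\om}$, and since products of $R$-bounded families are $R$-bounded, the family $\{\l(\l-\calA^2)^{-1} : \l \notin \overline{\S_{2\om}^+}\}$ is $R$-bounded, i.e.\ $\calA^2$ is $R$-sectorial of angle at most $2\om$.

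The implication (1)$\Rightarrow$(2) is the easy direction: if $f \in H_0^\infty(\S_{2\om}^+)$, then $\tilde f(z):= f(z^2)$ belongs to $H_0^\infty(\S_\om)$ (bisector) with $\|\tilde f\|_\infty = \|f\|_\infty$, and a direct comparison of Dunford integrals (using the substitution $z = \mu^2$ in the contour for $\calA^2$) yields $f(\calA^2) = \tilde f(\calA)$. Hence $\|f(\calA^2)\| \lesssim \|\tilde f\|_\infty = \|f\|_\infty$.

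For the converse (2)$\Rightarrow$(1), which is the main obstacle, I would invoke the square function characterisation of $H^\infty$-calculus, extended to bisectorial operators on $L^p(\mu;\HH)$. Since $\HH$ is a Hilbert space and $1 < p < \infty$, the space $L^p(\mu;\HH)$ is UMD and $K$-convex, so the McIntosh-type machinery (cf.\ \cite{ADM}) applies to bisectorial operators: $\calA$ admits a bounded $H^\infty(\S_\om)$-calculus if and only if there exist two-sided square function estimates for some (equivalently every) non-zero $\phi \in H_0^\infty(\S_\om)$ (bisector). Choose the bisector-symmetric test function $\phi(z) := z^2/(1+z^2)^2$ and the corresponding sectorial test function $\psi(w) := w/(1+w)^2 \in H_0^\infty(\S_{2\om}^+)$. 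Then $\phi(z) = \psi(z^2)$ and therefore $\phi(t\calA) = \psi(t^2 \calA^2)$. The change of variables $s = t^2$ gives
$$ \int_0^\infty \|\phi(t\calA) F\|^2 \,\frac{dt}{t} = \tfrac{1}{2} \int_0^\infty \|\psi(s\calA^2) F\|^2 \,\frac{ds}{s}.$$
Assumption (2) and Proposition \ref{prop:Hinfty-sfe} (applied to $\calA^2$, which is $R$-sectorial of angle $<\pi$) yield the two-sided square function estimate for $\calA^2$ with test function $\psi$; noting that $\Null(\calA) = \Null(\calA^2)$ (a consequence of the bisectorial direct sum decomposition $U = \Null(\calA)\oplus\ov{\Ran(\calA)}$, so that $\calA^2 x = 0$ forces $\calA x \in \Null(\calA)\cap\ov{\Ran(\calA)} = \{0\}$), the identity above transfers this to the required two-sided square function estimate for $\calA$ with test function $\phi$. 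The bisectorial McIntosh theorem then delivers the bounded $H^\infty(\S_\om)$-calculus of $\calA$. The principal subtlety is to verify, in the $L^p(\mu;\HH)$ setting, that a single two-sided square function estimate of this form is indeed equivalent to the bounded $H^\infty$-calculus on the bisector — a statement which is standard but requires the bisectorial variant of the square function machinery from \cite{ADM,KuW}.
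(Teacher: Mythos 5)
Your proposal is correct and follows essentially the same route as the paper: the key step (2)$\Rightarrow$(1) is in both cases the transfer of the two-sided square function estimate from $\calA^2$ to $\calA$ via the substitution $\psi(z^2)=\phi(z)$, $t=s^2$, together with the identification $\Null(\calA)=\Null(\calA^2)$ and an appeal to the bisectorial version of Proposition \ref{prop:Hinfty-sfe}. The only difference is cosmetic: you work with a fixed test function and spell out the $R$-sectoriality of $\calA^2$ and the implication (1)$\Rightarrow$(2), which the paper dismisses as well known.
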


\begin{proof}
We prove the implication $(2) \Rightarrow(1)$, the other
assertions being well known. Let $\widetilde\psi \in
H_0^\infty(\S_{2\om}^+)$ and define $\psi\in H_0^\infty(\S_\om)$ by
$\psi(z) := \widetilde\psi(z^2).$

Since $\calA^2$ has a bounded $H^\infty(\S_{2\om}^+)$-functional
calculus, by Proposition \ref{prop:Hinfty-sfe} $\calA^2$ satisfies a
square function estimate
$$ c\n f - P_{\Null(\calA^2)} f \n_p
\le \Big\n \Big(\int_0^\infty \n\widetilde\psi
(t\calA^2)f\n^2\,\frac{dt}{t}\Big)^{1/2}\Big\n_p \le
C\n f\n_p,$$ where $P_{\Null(\calA^2)}$ denotes the
projection on $\Null(\calA) = \Null(\calA^2)$ with range
$\ov{\Ran(\calA)} = \ov{\Ran(\calA^2)}.$

But, $\widetilde\psi(t\calA^2) = \psi(\sqrt{t}\calA)$, and therefore
by the substitution $\sqrt{t}=s$ the above estimate is equivalent to
$$ \frac{c}{\sqrt{2}}\n f - P_{\Null(\calA)} f \n_p
\le \Big\n \Big(\int_0^\infty \n \psi
(s\calA)f\n^2\,\frac{ds}{s}\Big)^{1/2}\Big\n_p \le
\frac{C}{\sqrt{2}}\n f\n_p.$$ By the bisectorial version of
Proposition \ref{prop:Hinfty-sfe}, this estimate implies that
$\calA$ has a bounded $H^\infty(\Sigma_\omega)$-functional calculus.
\end{proof}

\begin{proof}[Alternative proof of Theorem \ref{thm:Kato} $(2)\Rightarrow (1)$]
We adapt an argument of \cite{AKM}, where more details can be found.

Consider the function $\sgn \in H^\infty(\S_\om)$ given by
$\sgn(z) = \one_{\S_\om^+}(z) - \one_{\S_\om^-}(z)  =
z/\sqrt{z^2}.$ Since  $\PiB$ has a bounded $H^\infty$-calculus
on $L^p \oplus \ov{\Ran_p(\uD)}$ by Proposition \ref{prop:ADM}, the
operator $\sgn(\PiB)$ is bounded. By the results already proved, this implies that  $\D(\PiB)
= \D(\sqrt{\PiB^2})$ with
 \begin{align*} 
 \n \PiB x \n_p  \eqsim \big\n\sqrt{\PiB^2} x\big\n_p, \quad
x\in \D(\PiB) = \D(\sqrt{\PiB^2}).
 \end{align*}
Clearly, on $L^p \oplus \ov{\Ran_p(\uD)}$
 we have
$$ \sqrt{\PiB^2} = \bma \sqrt{\LB} & 0 \\ 0 & \sqrt{{\uLB}} \ema,$$
and by restricting to elements of the form $x=(f,0)$ with $f\in
\D(\uD )$ we obtain the desired result.
\end{proof}

\medskip

{\em Acknowledgment} -- Part of this work was done while the authors visited
the University of New South Wales (JM) and the Australian National
University (JvN). They thank Ben Goldys at UNSW and Alan M$^{\rm
c}$Intosh at ANU for their kind hospitality. The inspiration for
this paper came from many discussions with Alan on his
recent work on functional calculi for Hodge-Dirac operators.

\bibliographystyle{ams-pln}
\bibliography{riesz_transf}

\end{document}